\documentclass[12pt, reqno]{amsart}
\usepackage[cp1251]{inputenc}
\usepackage{amsmath,amsopn,amssymb,amsthm, wasysym}
\usepackage[backref=page, breaklinks=true,colorlinks=true,linkcolor=blue,citecolor=blue,urlcolor=blue]{hyperref}

\usepackage[english]{babel}
\usepackage{graphicx}

\renewenvironment{proof}[1][Proof]{\textbf{#1.} }{\ \rule{0.5em}{0.5em}}

\textwidth 155mm
\textheight 240mm

\voffset -1.4cm
\hoffset -1.5cm

\DeclareMathOperator{\Isom}{Isom}

\DeclareMathOperator{\Id}{Id}

\DeclareMathOperator{\Lin}{Lin}
\DeclareMathOperator{\Symm}{Symm}

\DeclareMathOperator{\I}{I}

\DeclareMathOperator{\conv}{conv}

\renewenvironment{proof}[1][Proof]{\textbf{#1.} }
{\ \rule{0.5em}{0.5em}}

\newtheorem{theorem}{Theorem}
\newtheorem{prop}{Proposition}
\newtheorem{lemma}{Lemma}
\newtheorem{corollary}{Corollary}

\newtheorem{problem}{Problem}

\theoremstyle{definition}
\newtheorem{definition}{Definition}
\newtheorem{remark}{Remark}
\newtheorem{example}{Example}

\begin{document}
\selectlanguage{english}

\title
[On $m$-point homogeneous polyhedra in $3$-dimensional Euclidean space]
{On $m$-point homogeneous polyhedra \\in $3$-dimensional Euclidean space}

\author{V.N.~Berestovski\u\i, Yu.G.~Nikonorov}

\address{Berestovski\u\i\  Valeri\u\i\  Nikolaevich \newline
Sobolev Institute of Mathematics of the SB RAS, \newline
4 Acad. Koptyug Ave., Novosibirsk, 630090, RUSSIA}
\email{vberestov@inbox.ru}

\address{Nikonorov\ Yuri\u\i\  Gennadievich\newline
Southern Mathematical Institute of VSC RAS \newline
53 Vatutina St., Vladikavkaz, 362025, RUSSIA}
\email{nikonorov2006@mail.ru}

\thanks{The work of the first author was carried out within the framework of the State Contract to the \\ IM SB RAS, project
FWNF-2022-0006.}

\begin{abstract}
This paper is devoted to the study of the $m$-point homogeneity property
for the vertex sets of convex polytopes in Euclidean spaces.
In particular, we present the classifications of $2$-point and $3$-point homogeneous polyhedra in $\mathbb{R}^3$.

\vspace{2mm}
\noindent
2020 Mathematical Subject Classification: 52B15, 54E35, 20B05.

\vspace{2mm} \noindent Key words and phrases:
Archimedean solid,
finite homogeneous metric space,  $m$-point homogeneous metric space, $2$-point homogeneous polytope,
 Platonic solid,  regular polytope,
semiregular polytope.
\end{abstract}

\maketitle

\section{Introduction}\label{sec.1}

The main object of our study are finite metric spaces with special properties.
A finite metric space $(M,d)$ is homogeneous if its isometry group $\Isom(M)$  acts transitively on
$M$.
A finite homogeneous metric subspace of Euclidean space
represents the vertex set of a compact convex polytope
with isometry group that is transitive on the vertex set;
in each case, all vertices lie on a sphere.
We emphasize that in this paper we work only with convex polyhedra.
In \cite{BerNik19, BerNik21, BerNik21n}, the authors  obtained the complete description of the metric properties
of the vertex sets of regular and semiregular polytopes
in Euclidean spaces from the point of view of the normal homogeneity and the Clifford--Wolf homogeneity, see also surveys \cite{BerNik21nn} and \cite{BerNik24}.
Perfect and almost perfect homogeneous polytopes are studied in \cite{BerNik23}.
Some properties of $m$-point homogeneous finite subspaces of Euclidean spaces were discussed in \cite{BerNik22}.

\begin{definition}\label{de:mpoint}
A metric space $(M,d)$ is called {\it $m$-point homogeneous}, $m \in \mathbb{N}$, if for every pair of $m$-tuples
$(A_1, A_2,\dots, A_m)$ and $(B_1,B_2,\dots, B_m)$  of elements of $M$ such that $d(A_i,A_j)=d(B_i,B_j)$ for all $i,j =1,\dots, m$,
there is an isometry $f \in \Isom(M)$  with the following property: $f(A_i)=B_i$ for all $i=1,\dots, m$.
\end{definition}

In this paper, we also deal with finite subsets of Euclidean space~$\mathbb{R}^n$ with some degree of homogeneity.
It is assumed that any such set $M$ is supplied with the metric $d$
induced from $\mathbb{R}^n$, hence, is a finite metric space itself.

Since the barycenter of a finite system of point masses (all of the same mass) in any Euclidean space is preserved under any bijection
(in particular, any isometry)
of this system, we have the following result (see, e.~g, \cite[Theorem 3.41]{Cox63} or \cite{BerNik19}).

\begin{prop} \label{pr.efhs}
Let $M=\{x_1, \dots, x_q\}$, $q\geq n+1$, be a finite homogeneous metric subspace of Euclidean
space $\mathbb{R}^n$, $n\geq 2$. Then
$M$ is the vertex set of  a convex polytope~$P$, that is situated in some sphere in $\mathbb{R}^n$ with radius
$r>0$ and center $x_0=\frac{1}{q}\cdot\sum_{k=1}^{q}x_k$.
In particular, $\Isom(M,d)\subset O(n)$.
\end{prop}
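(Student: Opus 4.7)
The plan is to exploit two elementary facts: any distance-preserving map between finite subsets of Euclidean space extends to an affine isometry of the ambient $\mathbb{R}^n$, and the barycenter of a finite point set is invariant under permutations of its points. Combining these with the homogeneity hypothesis will yield all three claims (sphere, vertex set, inclusion in $O(n)$) at once.

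First, given any $f \in \Isom(M,d)$, I would extend it to a Euclidean isometry $\widetilde{f}$ of $\mathbb{R}^n$, using the classical fact that an isometry of a finite subset of $\mathbb{R}^n$ into itself extends to an affine isometry of the affine hull and then to a rigid motion of the whole space. Since $f$ permutes $\{x_1,\dots,x_q\}$ and $\widetilde{f}$ is affine, $\widetilde{f}$ fixes the barycenter $x_0 = \frac{1}{q}\sum_{k=1}^q x_k$. Homogeneity then supplies, for each pair $i,j$, an $\widetilde{f}$ with $\widetilde{f}(x_i)=x_j$ and $\widetilde{f}(x_0)=x_0$; comparing distances gives $\|x_i - x_0\| = \|x_j - x_0\|$. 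Thus all $x_k$ lie on a common sphere of radius $r := \|x_1 - x_0\|$ centered at $x_0$, and $r>0$ because $q\geq n+1\geq 3$ rules out the possibility $x_k = x_0$ for every $k$.

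Next, I would note that any finite set of points on a sphere consists entirely of extreme points of its convex hull, since a point of a sphere cannot be written as a nontrivial convex combination of other points of the same sphere. Hence $M$ is precisely the vertex set of the convex polytope $P := \conv(M)$. Finally, after translating so that $x_0$ becomes the origin, each extension $\widetilde{f}$ is an affine isometry of $\mathbb{R}^n$ fixing the origin, hence an element of $O(n)$; this proves $\Isom(M,d) \subset O(n)$.

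The only nontrivial ingredient is the extension theorem for isometries of finite subsets of Euclidean space; once that is invoked, the rest is a short chain of immediate observations about barycenters, spheres, and extreme points, with no serious obstacle to overcome.
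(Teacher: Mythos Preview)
Your proposal is correct and follows exactly the idea the paper indicates: the paper does not give a full proof here but cites \cite{BerNik19} after the one-line remark that the barycenter of the point set is preserved by any bijection (hence any isometry), which is precisely the engine of your argument. Your write-up simply fleshes this out with the standard ingredients (extension of an isometry of a finite set to an affine isometry of $\mathbb{R}^n$, equidistance from the barycenter via homogeneity, and the extreme-point observation for points on a sphere), so there is nothing to correct or contrast.
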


This result shows that {\it the theory of convex polytopes} is very important for the study of finite homogeneous subspaces of Euclidean spaces.
\smallskip

Let $P$ be a non-degenerate convex polytope in $\mathbb{R}^n$  with the barycenter at the origin $O=(0,0,\dots,0)\in \mathbb{R}^n$
(the property to be non-degenerate means that $\Lin(P)=\mathbb{R}^n$ or, equivalently, $O$ is in the interior of $P$).
The symmetry group $\Symm(P)$ of $P$ is the group of isometries of $\mathbb{R}^n$ that preserve $P$. It is clear that
each $\psi \in \Symm(P)$ is an orthogonal transformation of $\mathbb{R}^n$ (obviously, $\psi(O)=O$ for any symmetry $\psi$ of~$P$).

If $M$ is the vertex set of a polytope $P$ supplied with the metric $d$ induced from the Euclidean one in $\mathbb{R}^n$,
then the isometry group $\Isom(M,d)$ of the metric space $(M,d)$ is the same as $\Symm(P)$.

\begin{definition}\label{de_poly_shom}
A polytope~$P$ in~$\mathbb{R}^n$
is {\it homogeneous\/} (or {\it vertex-transitive}) if
its symmetry (isometry) group $\Symm(P)$ acts transitively on the set of its vertices.
\end{definition}

We easily get the following corollary from Proposition \ref{pr.efhs}.

\begin{corollary}\label{co.inscr.face.1}
Let $P$ be a non-degenerate homogeneous polytope in $\mathbb{R}^n$ and $F$ a face.
Then $F$ is an inscribed polytope in the plane $L$ of minimal dimension, containing $F$, i.~e., all vertices of $F$ are situated on some sphere in $L$.
\end{corollary}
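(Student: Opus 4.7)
The plan is to derive the corollary directly from Proposition \ref{pr.efhs} by intersecting the circumsphere of $P$ with the affine hull of $F$.

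First I would apply Proposition \ref{pr.efhs} to the vertex set $M$ of $P$: since $P$ is non-degenerate and homogeneous, all its vertices lie on a single sphere $S \subset \mathbb{R}^n$ of some radius $r>0$ centered at the barycenter $x_0$, which (after a harmless translation, or simply by using the displayed formula for $x_0$) we may take to be the origin. The vertices of any face $F$ form a subset of the vertex set of $P$, so each of them lies on $S$.

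Next, let $L$ denote the affine plane of minimal dimension containing $F$; then the vertices of $F$ lie in $S \cap L$. The remaining step is to show that $S \cap L$ is a sphere in $L$. Writing $O'$ for the orthogonal projection of the origin $O$ onto $L$, any point $x \in L$ satisfies the Pythagorean identity $\|x-O\|^2 = \|x-O'\|^2 + \|O'-O\|^2$, so $x \in S \cap L$ if and only if $\|x-O'\|^2 = r^2 - \|O'-O\|^2$. Since at least one vertex of $F$ lies in $S \cap L$, the right-hand side is non-negative, and $S \cap L$ is therefore a sphere in $L$ (possibly degenerate, i.e.\ a single point) centered at $O'$.

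Finally, I would dispose of the degenerate case: if $r^2 - \|O'-O\|^2 = 0$, then $S \cap L = \{O'\}$ and all vertices of $F$ coincide with $O'$, forcing $F$ to be a single point, so $\dim L = 0$ and the conclusion holds trivially (with $O'$ itself serving as the ``sphere''). Otherwise, all vertices of $F$ genuinely lie on a sphere of positive radius in $L$, as required.

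There is no real obstacle here; the only mild point to watch is the distinction between the two cases above, to make sure the statement is interpreted correctly when $F$ is a vertex or when $L$ has low dimension. The substance of the argument is the single observation that the circumsphere of a homogeneous polytope, guaranteed by Proposition \ref{pr.efhs}, restricts to a sphere on every affine subspace.
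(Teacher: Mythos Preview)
Your proposal is correct and follows exactly the same approach as the paper: apply Proposition~\ref{pr.efhs} to get the circumsphere $S$ of $P$, and then observe that the vertices of $F$ lie in $S\cap L$, which is a sphere in $L$. The paper's proof is a two-line version of this; your added Pythagorean computation and handling of the degenerate case simply make explicit what the paper takes as ``clear''.
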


\begin{proof} Since $P$ is homogeneous, then it is inscribed by Proposition \ref{pr.efhs},
i.~e., all vertices of $P$ are situated on some sphere $S$ in $\mathbb{R}^n$.
It is clear that all vertices of $F$ are situated on the set $S\cap L$ which is a sphere in $L$.
\end{proof}

\begin{definition}\label{de_poly_hom}
A convex polytope $P$ in $\mathbb{R}^n$ is called $m$-point homogeneous if its vertex set {\rm(}with induced metric $d$ from $\mathbb{R}^n${\rm)}
is $m$-point homogeneous.
\end{definition}

In particular, the property to be $1$-point homogeneous means just the property to be homogeneous.
Recall that regular as well as semiregular polytopes (see the relevant definitions in Section~\ref{sec.2})
in Euclidean spaces are homogeneous.

It is natural to consider the following classification problem.

\begin{problem}[\cite{BerNik22}]\label{pr:kpoin.euclid}
Classify all convex polytopes $P$ in $\mathbb{R}^n$ whose vertex sets are $m$-point homogeneous, where $m \geq 2$.
\end{problem}

We started to study systematically $m$-point homogeneus polytops in Euclidean spaces in \cite{BerNik22}, where we obtain some important results.
For instance, it is proved that the vertex set of the $n$-cube is not $4$-point homogeneous for $n\geq 4$ \cite[Proposition 5.2]{BerNik22}.
In this regard, it should be noted that the vertex set of the square or the three-dimensional cube are $m$-point homogeneous for any $m\geq 4$.

The main goal of this paper is to obtain a complete classification of $m$-point homogeneous polyhedra in three-dimensional Euclidean space for $m \geq 2$.
Since all such polyhedra are homogeneous, then we need a detailed description of the class of homogeneous $3$-dimensional polyhedra.

For our purposes, we could apply the well-known classification of convex uniform polyhedra in $3$-dimensional Euclidean space.
Recall that uniform polyhedra  (not necessarily convex) in $3$-dimensional Euclidean space, are vertex-transitive polyhedra, allowing only
regular convex polygons and star-polygons as faces.
A corresponding list of $75$ uniform polyhedra (excluding two infinite sequences of prisms and antiprisms)
in $3$-dimensional Euclidean space was obtained by Coxeter, Longuet-Higgins,
and Miller in 1954 \cite{CLHM53}. Sopov \cite{Sop70} proved completeness of the list of 75 polyhedra in 1970. Independently,
Skilling \cite{Ski75} also came to the same conclusion in 1975.

For the class of convex uniform polyhedra (the same class that forms convex semiregular polyhedra), the enumeration
was obtained by Coxeter back in 1940, see \S 1.5 of~\cite{Cox40}, see also some discussion in \cite{BerNik21n}.
The famous Wythoff’s construction is very useful for describing them.
Related works
by Coxeter \cite{Cox85, Cox88, Cox88n}, Gr\"{u}nbaum and Shephard \cite{GrShe81, GrShe84}, and Leopold \cite{Leo17}
further elucidate this approach (and its limitations) in more general settings.
Since Gr\"{u}nbaum and Shephard’s investigation of self-intersection-free
polyhedra with positive genus and vertex-transitive symmetry in 1984 \cite{GrShe84}, the question
of complete classification of such objects in $3$-dimensional Euclidean space has been open.

It should also be noted that convex homogeneous polyhedra in $3$-dimensional Euclidean space can have faces that are not regular polygons.
A detailed description of the class of $3$-dimensional homogeneous polyhedra can be found in \cite{GrShe81}, \cite{RobCar}, and \cite{RobCarMor}.

However, we decided to choose a different approach. We chose to obtain all the properties of homogeneous polyhedra necessary for our study
independently of the classical sources mentioned above. This allows us to get
a more self-contained presentation and an alternative derivation of some well-known properties of homogeneous polyhedra.
The corresponding results are presented in
Section \ref{sec.7}.

It should be noted that any $n$-point homogeneous polytope in $\mathbb{R}^n$ is $m$-point homogeneous for every $m \in \mathbb{N}$, see
Corollary~\ref{co:if-homog}. Therefore, for homogeneous polyhedra in $\mathbb{R}^3$, it suffices to check $m$-point homogeneity only for $m=2,3$.
We know the following results.

\begin{theorem}[\cite{BerNik22}]\label{th:reg_pol}
The tetrahedron, cube, octahedron, and icosahedron are $m$-point homogeneous polyhedra for every natural $m$, while
the dodecahedron is $2$-point homogeneous but is not $3$-point homogeneous.
\end{theorem}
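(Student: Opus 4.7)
The plan is to reduce to verifying only $m=2$ and $m=3$ for each of the five Platonic solids. By Corollary $3.2$ of \cite{BerNik22}, a $3$-point homogeneous polyhedron in $\mathbb{R}^3$ is automatically $m$-point homogeneous for every $m\in\mathbb{N}$, and each Platonic solid is vertex-transitive, so the content of the theorem concerns only $2$-point and $3$-point homogeneity. The tetrahedron is handled immediately: all $\binom{4}{2}$ vertex pairs have the same length, so every bijection of its vertex set is distance-preserving and is realized by an element of $\Symm(P)=S_4$; hence it is $m$-point homogeneous for every $m$.

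For the cube, octahedron and icosahedron the strategy is a uniform orbit-counting analysis. After fixing standard coordinates one lists the distinct values of pairwise vertex distances, and for each value $d_0$ verifies that $\Symm(P)$ acts transitively on the set of ordered pairs at distance $d_0$: this reduces to the check that the number of such pairs equals $|\Symm(P)|$ divided by the order of the pointwise stabilizer of a chosen representative. This settles $2$-point homogeneity. For $3$-point homogeneity one partitions ordered triples of distinct vertices by their distance signature (the multiset of three pairwise distances) and repeats the orbit count class by class. For the cube and octahedron $|\Symm(P)|=48$ and only a handful of signature classes arise; for the icosahedron $|\Symm(P)|=120$ and the list is longer but still finite, and in every case the group proves large enough to act transitively on each class.

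The dodecahedron is the delicate case. Its $2$-point homogeneity follows by the same orbit-counting argument applied to each of its pairwise distance values. For failure of $3$-point homogeneity the task is to exhibit two ordered triples $(A_1,A_2,A_3)$ and $(B_1,B_2,B_3)$ of vertices with identical pairwise distances that lie in different orbits of $\Symm(P)$. A natural route sets $(A_1,A_2)=(B_1,B_2)$ and seeks two vertices $A_3,B_3$ with the same pair of distances to $\{A_1,A_2\}$ but not exchanged by the order-$2$ stabilizer of $(A_1,A_2)$. Using the standard coordinate realization whose $20$ vertices consist of $(\pm1,\pm1,\pm1)$ together with $(0,\pm\varphi^{-1},\pm\varphi)$ and its cyclic permutations (with $\varphi=(1+\sqrt{5})/2$), locating such $A_3$ and $B_3$ and verifying the orbit distinction is a finite computation.

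The main obstacle is pinpointing the particular distance signature supporting two inequivalent triples. Although $|\Symm(P)|=120$ matches the icosahedral case, the $20$-vertex structure of the dodecahedron (reflected, for instance, in its decomposition into five inscribed cubes permuted by $\Symm(P)$) gives rise to more $\Symm(P)$-orbits on triples than the distance signature alone distinguishes, and identifying the exact configuration where this refinement occurs is where one must descend either to an explicit coordinate search or to a structural argument tracking inscribed cubes. Once such a configuration is produced, the remaining verification is routine.
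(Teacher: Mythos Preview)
The paper does not supply its own proof of this theorem: it is quoted verbatim from \cite{BerNik22} and used as input elsewhere. So there is nothing in the present paper to compare your proposal against; what you have written is an outline of how one might reprove the cited result.

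As a strategy your outline is sound. The reduction to $m\le 3$ via Corollary~3.2 of \cite{BerNik22} is correct, the tetrahedron argument is complete, and the orbit-counting scheme you describe for the cube, octahedron, and icosahedron would work once actually executed. But as written it is a plan rather than a proof: for those three solids you assert that ``in every case the group proves large enough'' without performing a single one of the counts, and for the dodecahedron you explicitly postpone the decisive step, saying that locating the two inequivalent triples ``is a finite computation'' and that ``identifying the exact configuration \ldots\ is where one must descend'' to coordinates or a cube-tracking argument. That is precisely the content of the theorem in the dodecahedral case, and you have not supplied it.

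If you want a self-contained proof, the cleanest way to finish the dodecahedron is the one your last paragraph gestures at: take three vertices forming a face of one of the five inscribed cubes (an equilateral triangle with side length $2$ in the coordinates you chose), and compare with three vertices at the same mutual distance that do not all lie in a common inscribed cube. Since $\Symm(P)$ permutes the five cubes, no symmetry can carry the first triple to the second. Writing down one explicit such pair of triples and checking the distances completes the argument; until you do that, the proposal remains a sketch.
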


\begin{theorem}[\cite{BerNik22}]\label{th:edge}
If all edges of a given $2$-point homogeneous polyhedron $P\subset \mathbb{R}^3$ have the same length, then $P$ is either the cuboctahedron or a regular polyhedron.
\end{theorem}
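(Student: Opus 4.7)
The plan is to reduce the problem, via the equal edge-length condition, to the classical classification of vertex- and edge-transitive convex polyhedra in $\mathbb{R}^3$, and then to check the remaining cases directly.

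First, note that $2$-point homogeneity implies vertex-transitivity (use the degenerate $2$-tuples $(A,A)$ and $(B,B)$). Let $\ell$ denote the common edge length. Any element of $\Symm(P)$ preserves the combinatorial structure of $P$, hence maps edges to edges and non-edges to non-edges. Therefore, if some pair of non-adjacent vertices were at distance $\ell$, the $2$-point homogeneity would require an isometry sending an edge pair to a non-edge pair, which is impossible. Hence the distance $\ell$ is realized only by edges of $P$, and $2$-point homogeneity at this distance level immediately yields transitivity of $\Symm(P)$ on ordered edges. Thus $P$ is simultaneously vertex- and edge-transitive. By the classical (Coxeter) classification of convex polyhedra in $\mathbb{R}^3$ with these two transitivity properties, $P$ must be one of the five Platonic solids, the cuboctahedron, or the icosidodecahedron. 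The Platonic case is already covered by Theorem~\ref{th:reg_pol}, so it remains to verify that the cuboctahedron is $2$-point homogeneous and that the icosidodecahedron is not.

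For the cuboctahedron, I would take the standard vertex set $M = \{(\pm 1, \pm 1, 0),\ (\pm 1, 0, \pm 1),\ (0, \pm 1, \pm 1)\}$ and fix $v = (1,1,0)$. The distances from $v$ to the remaining $11$ vertices fall into four classes of sizes $4, 2, 4, 1$ (respectively $\sqrt{2}$, $2$, $\sqrt{6}$, $2\sqrt{2}$). The stabilizer of $v$ in $\Symm(P)$ is the Klein four-group generated by the swap $x\leftrightarrow y$ and the reflection $z\mapsto -z$, and a direct check shows it acts transitively on each of these four classes; hence the cuboctahedron is $2$-point homogeneous.

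For the icosidodecahedron, I use the coordinates formed by the cyclic permutations of $(0,0,\pm\phi)$ together with the cyclic permutations of $(\pm 1/2,\pm\phi/2,\pm\phi^{2}/2)$, where $\phi = (1+\sqrt{5})/2$. Fix $v = (0,0,\phi)$. The stabilizer of $v$ in $\Symm(P)$ has order $4$ (no $3$- or $5$-fold rotation through $v$ is possible, since the vertex figure is a non-square rectangle) and is generated by $(x,y,z)\mapsto (-x,y,z)$ and $(x,y,z)\mapsto (x,-y,z)$. However, the four vertices $(\pm\phi,0,0)$ and $(0,\pm\phi,0)$ all lie at distance $\phi\sqrt{2}$ from $v$, while this stabilizer splits them into two orbits of size $2$, namely $\{(\pm\phi,0,0)\}$ and $\{(0,\pm\phi,0)\}$. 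Therefore no symmetry of $P$ maps the pair $(v,(\phi,0,0))$ to the pair $(v,(0,\phi,0))$, contradicting $2$-point homogeneity. The main obstacle in the whole argument lies precisely in this last step: one has to pin down the full vertex stabilizer inside $\Symm(P)$ (and be sure no further rotation has been missed) and exhibit an explicit equidistant but inequivalent pair of vertex pairs.
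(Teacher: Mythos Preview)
The paper does not actually contain a proof of this theorem; it is quoted from the earlier work \cite{BerNik22} and stated here without argument, so there is no in-paper proof to compare your proposal against. That said, your plan is correct.

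The reduction to vertex- and edge-transitivity is sound: the observation that no non-edge pair of vertices can realize the common edge length $\ell$ (otherwise $2$-point homogeneity would produce an isometry carrying an edge to a non-edge) is exactly the right lever, and it immediately yields transitivity of $\Symm(P)$ on ordered edges. Your appeal to the classification of vertex- and edge-transitive convex polyhedra (Platonic solids, cuboctahedron, icosidodecahedron) is legitimate; in a fully self-contained write-up you would want either a precise reference or the short derivation (vertex-transitivity forces $P$ to be inscribed, so every face is a cyclic polygon with equal sides, hence regular; then one runs through the uniform polyhedra and checks which are edge-transitive). Your explicit verifications are also correct: for the cuboctahedron the vertex stabilizer has order $48/12=4$, is generated by $x\leftrightarrow y$ and $z\mapsto -z$, and acts transitively on each of the four distance spheres; for the icosidodecahedron the vertex stabilizer at $(0,0,\phi)$ has order $120/30=4$, is generated by $x\mapsto -x$ and $y\mapsto -y$, and splits the four equidistant vertices $(\pm\phi,0,0)$, $(0,\pm\phi,0)$ into two orbits, so $2$-point homogeneity fails. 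The only place to be careful is justifying that the vertex stabilizer of the icosidodecahedron is no larger than this Klein four-group; your remark that the vertex figure is a non-square rectangle (triangle--pentagon--triangle--pentagon) handles this cleanly.
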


\begin{corollary}\label{co.archimed.1}
All semiregular polyhedra except the cuboctahedron are homogeneous but are not $2$-point homogeneous.
\end{corollary}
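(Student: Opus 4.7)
The statement has two parts to verify. The assertion that every semiregular polyhedron is homogeneous is built into the definition of semiregularity in $\mathbb{R}^3$: the Archimedean solids together with the uniform prisms and antiprisms are, by definition, vertex-transitive polyhedra whose faces are regular polygons. Hence, by Definition \ref{de_poly_hom}, every such polyhedron is $1$-point homogeneous.

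For the non-trivial part, that all semiregular polyhedra except the cuboctahedron fail to be $2$-point homogeneous, the plan is to reduce directly to Theorem \ref{th:edge}. The crucial structural observation is that in every semiregular polyhedron in $\mathbb{R}^3$ (Archimedean solid, uniform prism, or uniform antiprism) all edges have one and the same length; indeed, this equality of edge lengths is part of the standard definition and follows from vertex-transitivity together with the requirement that all faces be regular polygons of consistent edge length.

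Granted this, suppose for contradiction that a semiregular polyhedron $P \subset \mathbb{R}^3$ different from the cuboctahedron were $2$-point homogeneous. Since all edges of $P$ have equal length, Theorem \ref{th:edge} forces $P$ to be either the cuboctahedron or one of the Platonic solids (tetrahedron, cube, octahedron, dodecahedron, icosahedron). The first alternative is excluded by hypothesis, and the second is excluded because by definition a semiregular polyhedron is not a regular polyhedron. This contradiction proves the corollary.

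There is essentially no obstacle beyond invoking the correct definitions: the only thing to be careful about is to verify, from the accepted definition of semiregular polyhedron used in the paper, that the common edge length condition holds for all items on the list (the $13$ Archimedean solids and the infinite families of uniform prisms and antiprisms), so that Theorem \ref{th:edge} applies uniformly.
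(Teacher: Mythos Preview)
Your proposal is correct and follows exactly the reasoning the paper intends: the corollary is stated immediately after Theorem~\ref{th:edge} without a separate proof, so the argument is the direct one you give --- semiregular polyhedra have all edges of equal length, hence by Theorem~\ref{th:edge} any $2$-point homogeneous one must be the cuboctahedron or a Platonic solid. The only cosmetic point to watch is that the paper's formal definition of ``semiregular'' does not literally exclude the regular polyhedra (it even says ``in addition to Platonic solids''), so your step ``by definition a semiregular polyhedron is not a regular polyhedron'' should be phrased as the standard convention that Corollary~\ref{co.archimed.1} concerns the Archimedean solids and the uniform prisms/antiprisms, none of which is a Platonic solid.
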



As a rule, we use standard notation. Otherwise, an explanation of the notation is given in the text of the paper.
When we deal with some metric space $(M,d)$,
for given $c\in M$ and $r \geq 0$, we consider $S(c,r)=\{x\in M\,|\, d(x,c)=r\}$,
the sphere with the center $c$ and radius $r$. For $x,y \in \mathbb{R}^n$, the symbol $[x,y]$ means the closed interval in $\mathbb{R}^n$
with the ends $x$ and $y$.
\smallskip

The paper is organized as follows.
In Section \ref{sec.2} we consider some important examples of homogeneous polytopes.
In Section \ref{sec.3} we discuss stable facets of homogeneous polytopes.
In Section \ref{sec.4} we recall some important properties of $2$-point homogeneous polytopes.
Section \ref{sec.5} is devoted to $2$-dimensional $m$-point homogeneous polygons.
In Section \ref{sec.6} we discuss the $m$-point homogeneity property of right prisms and antiprisms, as well as of some generalizations.
In Section \ref{sec.7} we discuss some metric properties of homogeneous polyhedra.
Finally, in Section \ref{sec.8} we obtain the classification of $2$-point homogeneous and the classification of $3$-point homogeneous
polyhedra (Theorems \ref{th.3dim.2point}, and \ref{th.3dim.mpoint}, respectively).
\smallskip

\begin{figure}[t]
\vspace{5mm}
\begin{center}
\begin{minipage}[h]{0.3\textwidth}
\center{\includegraphics[width=0.8\textwidth, trim=0in 0in 0mm 0mm, clip]{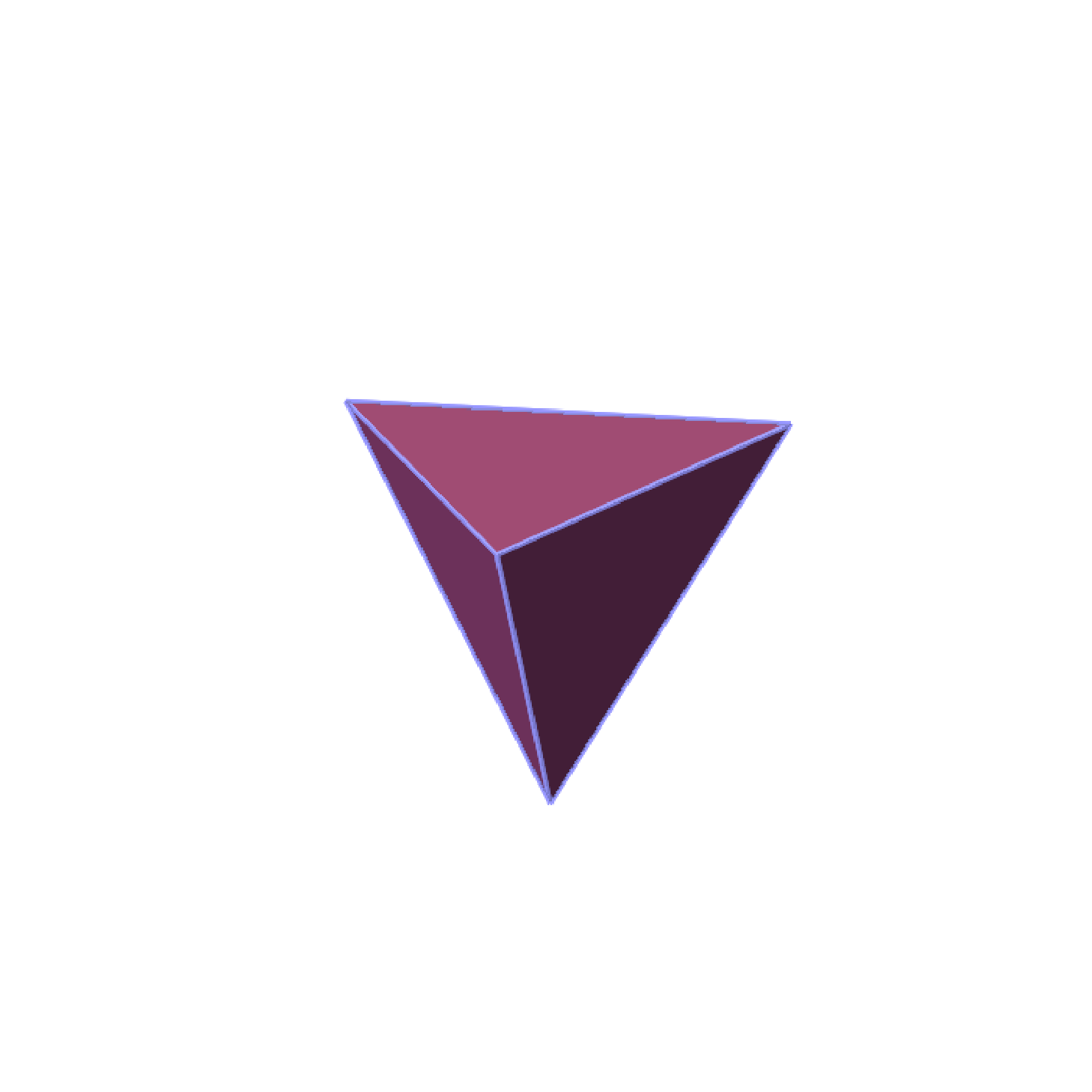} \\ a)}
\end{minipage}
\begin{minipage}[h]{0.3\textwidth}
\center{\includegraphics[width=0.7\textwidth, trim=0in 0in 0mm 0mm, clip]{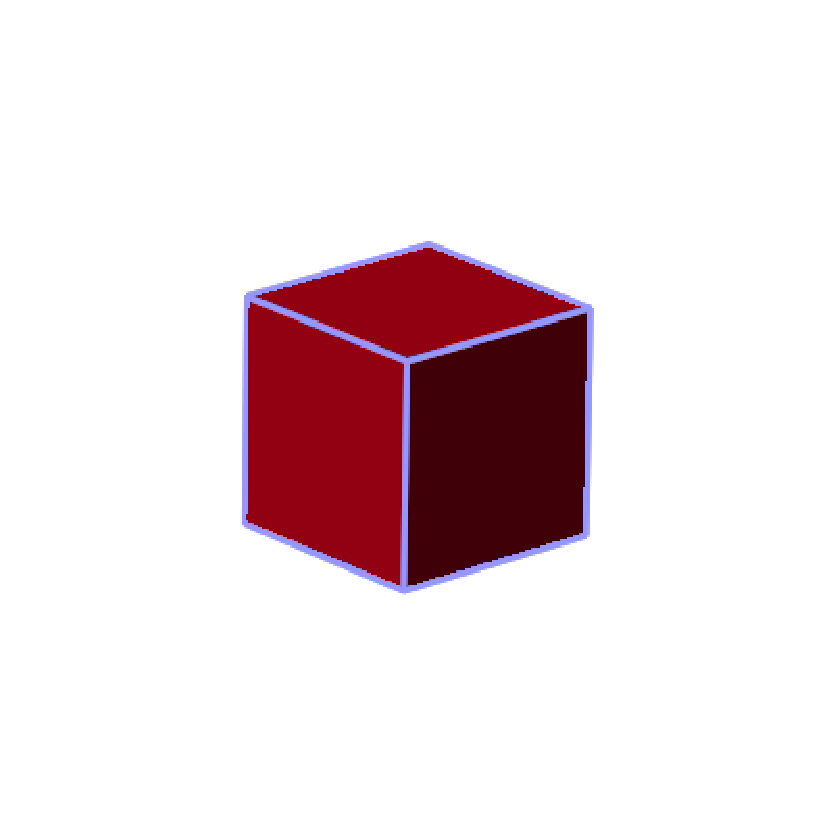} \\ b)}
\end{minipage}
\begin{minipage}[h]{0.3\textwidth}
\center{\includegraphics[width=0.7\textwidth, trim=0in 0in 0mm 0mm, clip]{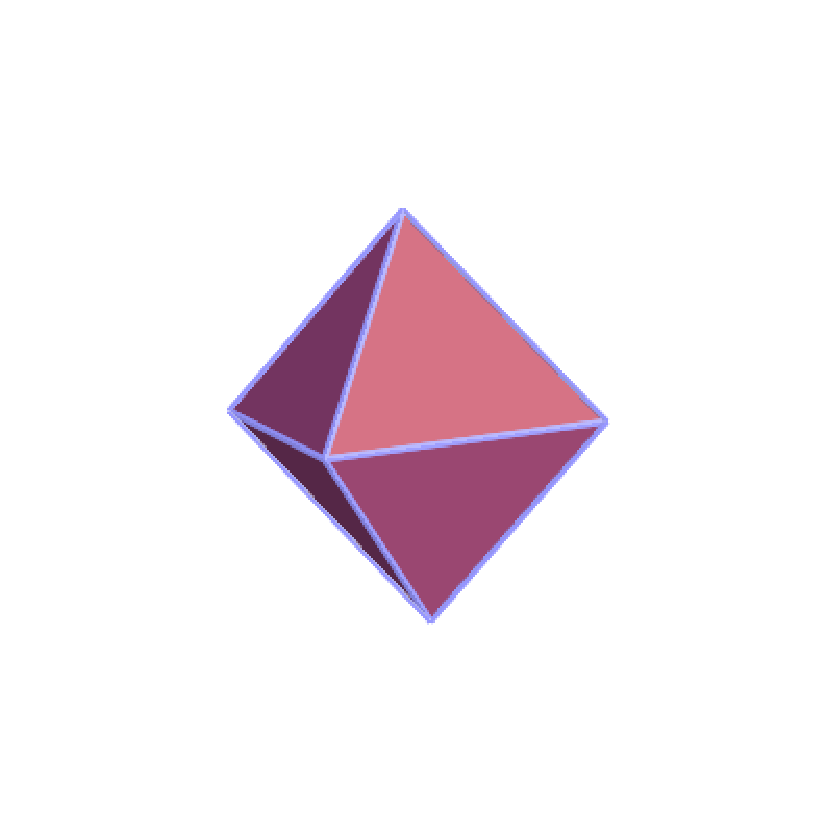} \\ c)}
\end{minipage}
\vfill
\begin{minipage}[h]{0.3\textwidth}
\center{\includegraphics[width=0.7\textwidth, trim=0in 0in 0mm 0mm, clip]{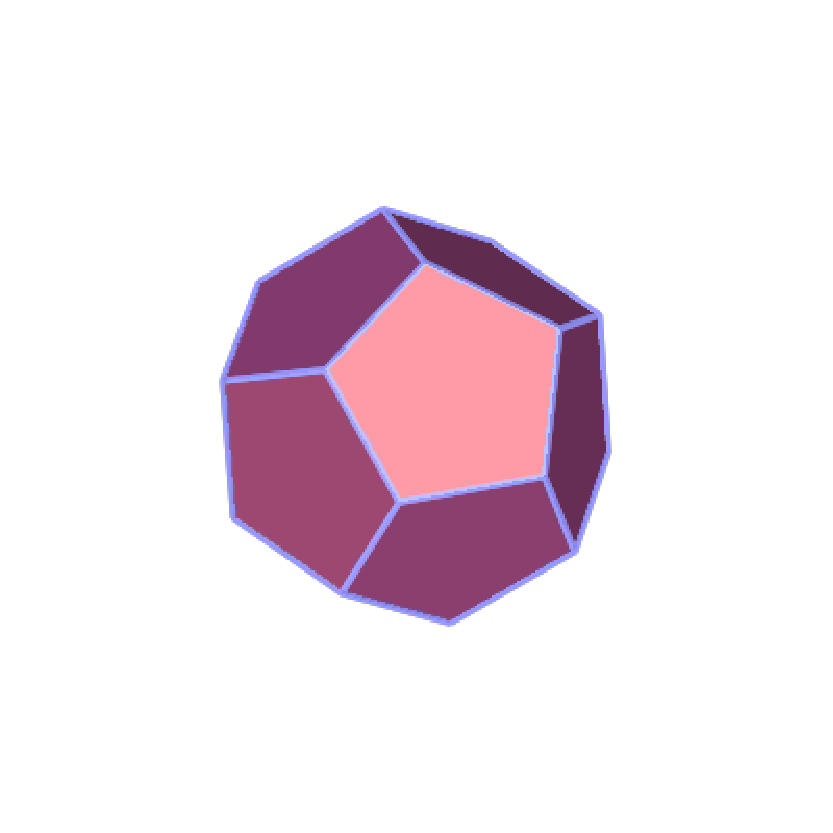} \\ d)}
\end{minipage}
\begin{minipage}[h]{0.3\textwidth}
\center{\includegraphics[width=0.8\textwidth, trim=0in 0in 0mm 0mm, clip]{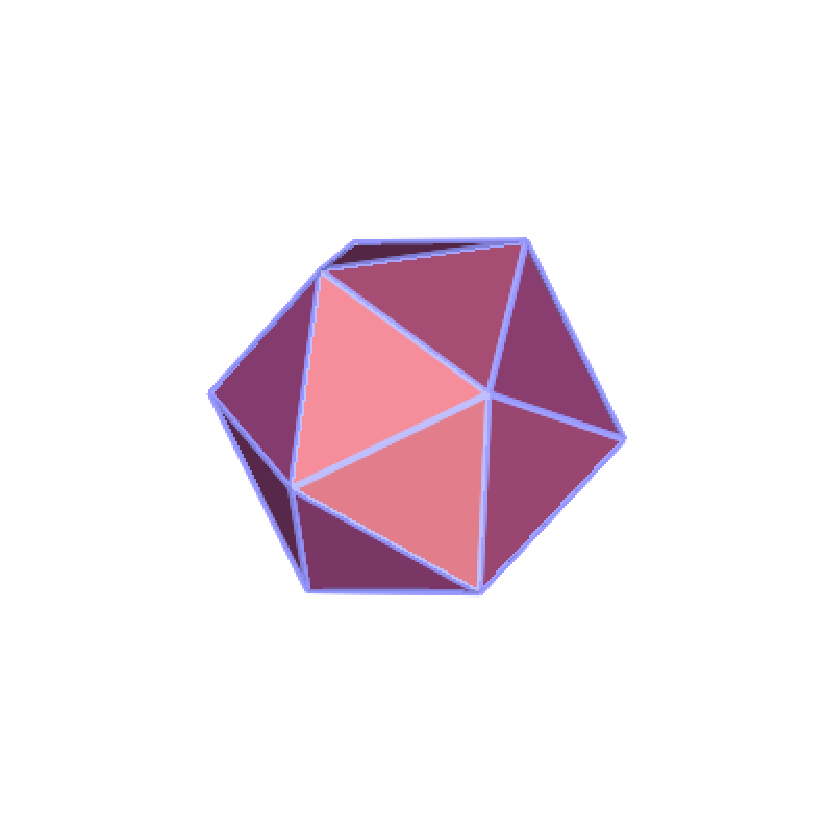} \\ e)}
\end{minipage}
\begin{minipage}[h]{0.3\textwidth}
\center{\includegraphics[width=0.8\textwidth, trim=0in 0in 0mm 0mm, clip]{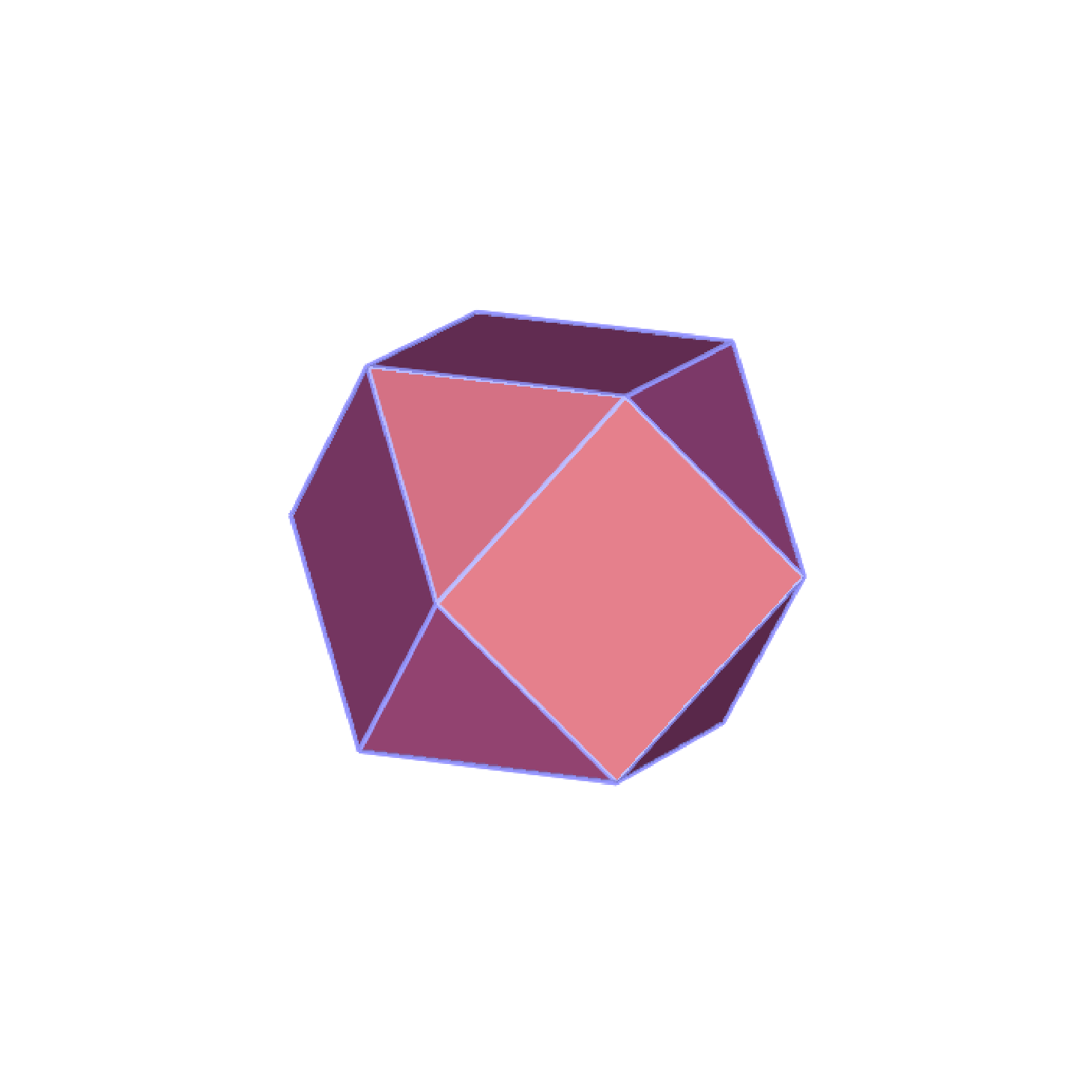} \\ f)}
\end{minipage}
\caption{{{\normalsize{ Platonic solids and cuboctahedron:}}
\normalsize{a) tetrahedron; b) cube (hexahedron); c) octahedron; d) dodecahedron; e) icosahedron, f) cuboctahedron.}}}
\label{Fig1}
\end{center}
\end{figure}

\section{Some natural examples}\label{sec.2}

Well known examples of homogeneous polytopes in Euclidean spaces are {\it regular polytopes}.
A one-dimensional polytope is a closed segment,
bounded by two endpoints. It is considered regular by definition.
Two-dimensional regular polytopes are regular
polygons on Euclidean plane.
A convex $n$-dimensional polytope for $n \geq 3$ is called {\it regular},
if it is homogeneous
and all its facets are regular polytopes congruent to each other, and
this definition is equivalent to other definitions of regular convex
polytopes (see~\cite{Mar}).
It is well known that there are only five regular convex three-dimensional
polyhedra:
the tetrahedron, cube, octahedron, dodecahedron, and icosahedron.
These polyhedra are traditionally called the {\it Platonic solids\/}
(see Fig.\,\ref{Fig1}).

We also recall the definition of a wider class of semiregular convex polyhedra.
For $n=1$ and $n=2$, semiregular polytopes are defined as regular.
A convex $n$-dimensional polytope for $n\geq 3$ is called
{\it semiregular} if it is homogeneous
and all its facets are regular polytopes.

In three-dimensional space (in addition to the Platonic solids), there are
the following semiregular polyhedra:
the 13 {\it Archimedean solids\/} (see more details and related references in \cite{BerNik21}) and two infinite sequences
of right prisms and antiprisms.

\begin{figure}[t]
\begin{minipage}[h]{0.45\textwidth}
\center{\includegraphics[width=0.7\textwidth, trim=0in 0in 0mm 0mm, clip]{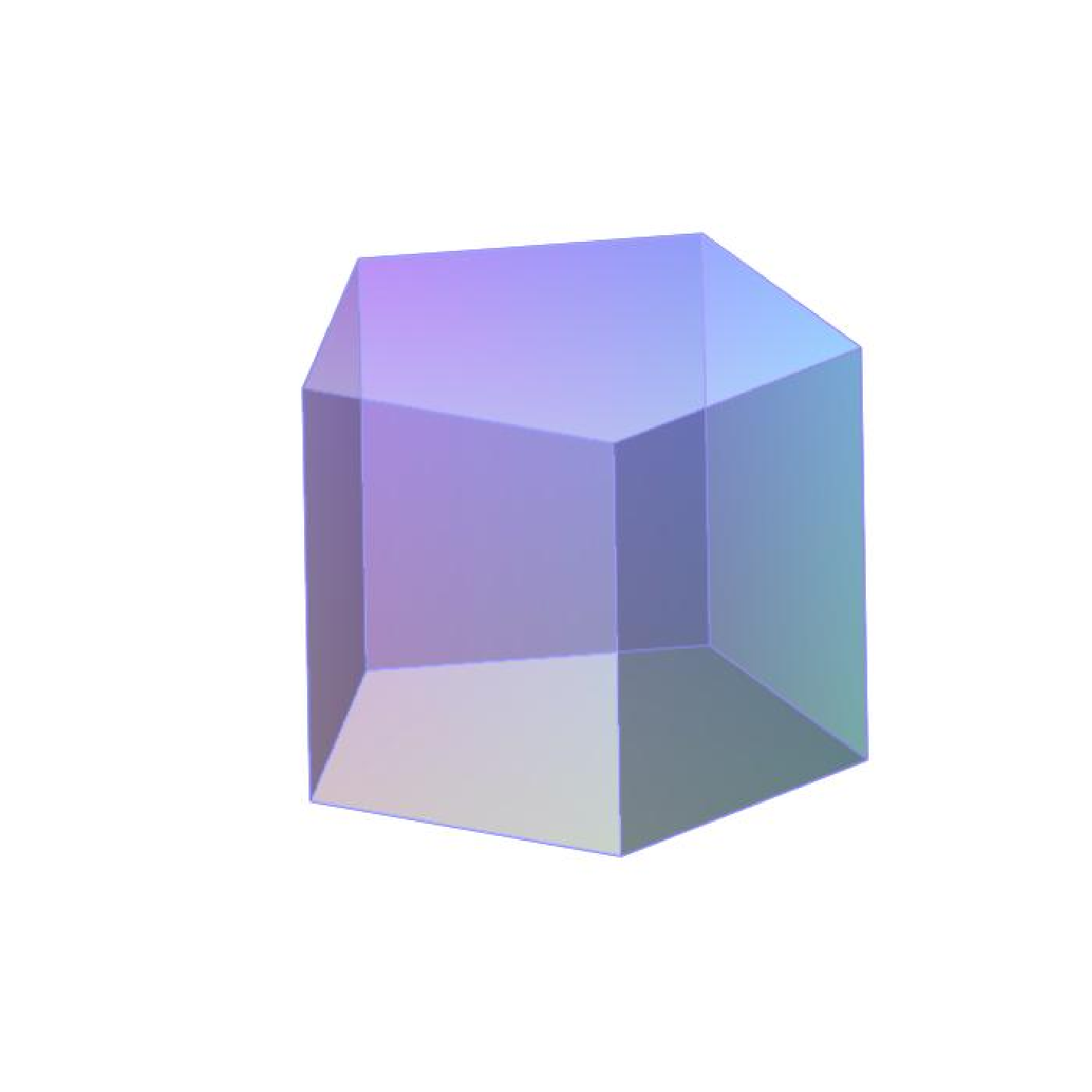} \\ a)}
\end{minipage}
\begin{minipage}[h]{0.45\textwidth}
\center{\includegraphics[width=0.9\textwidth, trim=0in 0in 0mm 0mm, clip]{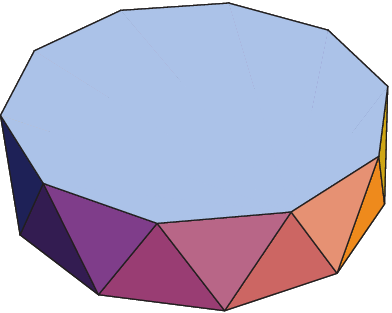} \\ b)}
\end{minipage}
\caption{{{\normalsize{Homogeneous polyhedra:}}
\normalsize{a) right prism with 5-gonal regular bases;
b)~right $10$-gonal antiprism.}}}
\label{Fig3}
\end{figure}

A {\it right prism} is a polyhedron whose two faces (called {\it bases}) are congruent (equal) regular polygons,
lying in parallel planes, while the remaining faces (called {\it lateral} ones) are rectangles (perpendicular to
the bases), see Fig.\,\ref{Fig3}~a).
A {\it homogeneous prism} is a right prism with homogeneous bases (see Lemma \ref{le.homepolygon.1} and Theorem \ref{th.2dim.1} that describe all homogeneous polygons).
A {\it regular prism} is a right prism with regular bases.
If in addition the lateral faces are squares then the prism under consideration is a semiregular convex polyhedron.

A {\it right antiprism} is a polyhedron, whose two parallel faces ({\it bases}) are equal regular $n$-gons twisted by an angle $\pi/n$,
with $2n$ ({\it lateral}) faces that are isosceles triangles. It is called also {\it a right $n$-gonal antiprism}, see Fig.\,\ref{Fig3}~b).
If the lateral faces are equilateral triangles then the antiprism under consideration is called {\it uniform}  and is a semiregular convex polyhedron.
\smallskip

Many interesting examples of $m$-point homogeneous polytopes were constructed in~\cite{BerNik22}.
For example, any $n$-cube, $n=1,2,3$, is $m$-point homogeneous for every $m \in \mathbb{N}$, while any $n$-cube, $n\geq 4$, is $3$-point homogeneous but is not
$4$-point homogeneous, see \cite[Proposition 5.4]{BerNik22}. Moreover, in \cite{BerNik22}, the authors studied
the standard $n$-dimensional demihypercubes $DC_n$, $n\geq 1$, as the convex hull
of the points $(\pm 1, \pm 1, \dots, \pm 1) \in \mathbb{R}^n$, where the quantity of the signs ``$-$'' is even;
the truncated $n$-dimensional simplex $TS_n$ for any $n\geq 1$; Gosset semiregular polytopes; and some other special polytopes.
\smallskip

In what follows we consider other interesting examples of homogeneous polytopes in~$\mathbb{R}^n$.

\begin{example}\label{ex.tetra.1}
Let us consider a tetrahedron (a simplex) $S$ in $\mathbb{R}^3$ with three given  edge lengths $0 < a \leq b \leq c$ such that every pair of edges without
common vertex have equal Euclidean lengths and each of the number $a, b, c$ is the length of edges
for some pair, see Fig.~\ref{Fig2}~a). It is easy to prove that $S$ is homogeneous (see Example \ref{ex.new.1}).

If $\alpha, \beta, \gamma$ are angles opposite to the sides of the triangle with lengths $a \leq b \leq c$, then $\gamma<\pi/2$;
this is equivalent to the inequality $c^2<a^2+b^2$.
Indeed, since the tetrahedron $S$ is homogeneous, then every vertex has exactly three adjacent faces with the incident angles $\alpha, \beta, \gamma$.
It is clear that $\pi-\gamma =\alpha+\beta> \gamma$ (the angles $\alpha$, $\beta$, and $\gamma$ are plane angles of a trihedral angle).
See also, Remark \ref{re.hom.simp}.
On the other hand, if $0 < a \leq b \leq c$ and $\alpha+\beta > \gamma$, then the above homogeneous tetrahedron $S$ does exist.
\end{example}

\begin{example}\label{ex.new.1}
The isometries of $\mathbb{R}^n$ of the type $(x_1,x_2,\cdots,x_n) \mapsto (\pm x_1, \pm x_2,\cdots, \pm x_n)$ with all possible choices of the signs constitute the group
$G$ isomorphic to $\mathbb{Z}^n_2$. If we consider the above isometries only with even numbers of ``$-$'', then we obtain a subgroup $G^{\ast}$ of $G$, that is isomorphic to
$\mathbb{Z}^{n-1}_2$.

Now, if we take a point $A=(a_1,a_2,\cdots, a_n)$ with all $a_i$ positive, then the orbit of this point by the action of  $G$ is the vertex set of a parallelotope
$[-a_1,a_1]\times[-a_2,a_2]\times \cdots \times [-a_n,a_n]$. The orbit of  $A$ by the action of $G^{\ast}$ is the vertex set of a homogeneous polytope $P$,
that is a hemihypercube for $a_1=a_2=\cdots =a_n$. To prove the homogeneity of $P$ let us consider any point $B=(b_1, b_2,\cdots, b_n)$
from the orbit of $A$ under $G^{\ast}$.
If $I:=\{i=1,2,\dots n\,|\, b_i=-a_i\}$, then $I$ has even numbers of elements and $b_j=a_j$ for any $j\not\in I$.
Let us consider a map $\eta: (x_1,x_2,\cdots,x_n) \mapsto (\pm x_1, \pm x_2,\cdots, \pm x_n)$ such that $\pm x_i = - x_i$ if and only if $i \in I$.
It is clear that $\eta$ is an isometry of $P$ and $\eta(B)=A$. This implies that the polytope $P$ is homogeneous.

The polytope $P$ is a segment for $n=2$. For $n=3$, a polyhedron $P$ is a homogeneous simplex with the vertices
$A_1=(k,l,m)$, $A_2=(-k,-l,m)$, $A_3=(-k,l,-m)$, $A_4=(k,-l,-m)$,
where $k,l,m \in \mathbb{R}$.

If $d_{ij}=d(A_i,A_j)$, then $d_{12}=d_{34}=2\sqrt{k^2+l^2}$, $d_{13}=d_{24}=2\sqrt{k^2+m^2}$, and $d_{14}=d_{23}=2\sqrt{l^2+m^2}$.

Obviously, any triangle with the side lengths $d_{12}$, $d_{13}$, and $d_{23}$ (for instance, the face $A_1 A_2 A_3$ of $P$)
is acute (compare with Example \ref{ex.tetra.1}). Indeed,
$$
d^2_{12}+d^2_{13}-d^2_{23}=8 k^2 >0, \quad d^2_{12}+d^2_{23}-d^2_{13}=8 l^2 >0, \quad d^2_{13}+d^2_{23}-d^2_{12}=8 m^2 >0,
$$
which implies that all angles of the triangle $A_2A_1A_3$ are acute.
Moreover, any such acute triangle has such side lengths for suitable
$k,l,m \in \mathbb{R}$. Indeed, for every $0 < a \leq b \leq c$ with $c^2<a^2+b^2$, we easily can find $0<k\leq l \leq m$ such that
$2\sqrt{k^2+l^2}=a$, $2\sqrt{k^2+m^2}=b$, and $2\sqrt{l^2+m^2}=c$.

It is clear that the center of $P$ is the origin $O=(0,0,0)$ and the polyhedron $-P$ has the vertices
$-A_1=(-k,-l,-m)$, $-A_2=(k,l,-m)$, $-A_3=(k,-l,m)$, $-A_4=(-k,l,m)$. In particular, $\conv \bigl(P\bigcup (-P)\bigr)=[-k,k]\times[-l,l]\times [-m,m]$.
In the general case we have  $\conv\bigl(P\bigcup (-P)\bigr)=[-a_1,a_1]\times[-a_2,a_2]\times \cdots \times [-a_n,a_n]$.
\end{example}

\begin{figure}[t]
\begin{minipage}[h]{0.45\textwidth}
\center{\includegraphics[width=0.8\textwidth, trim=0in 0in 0mm 0mm, clip]{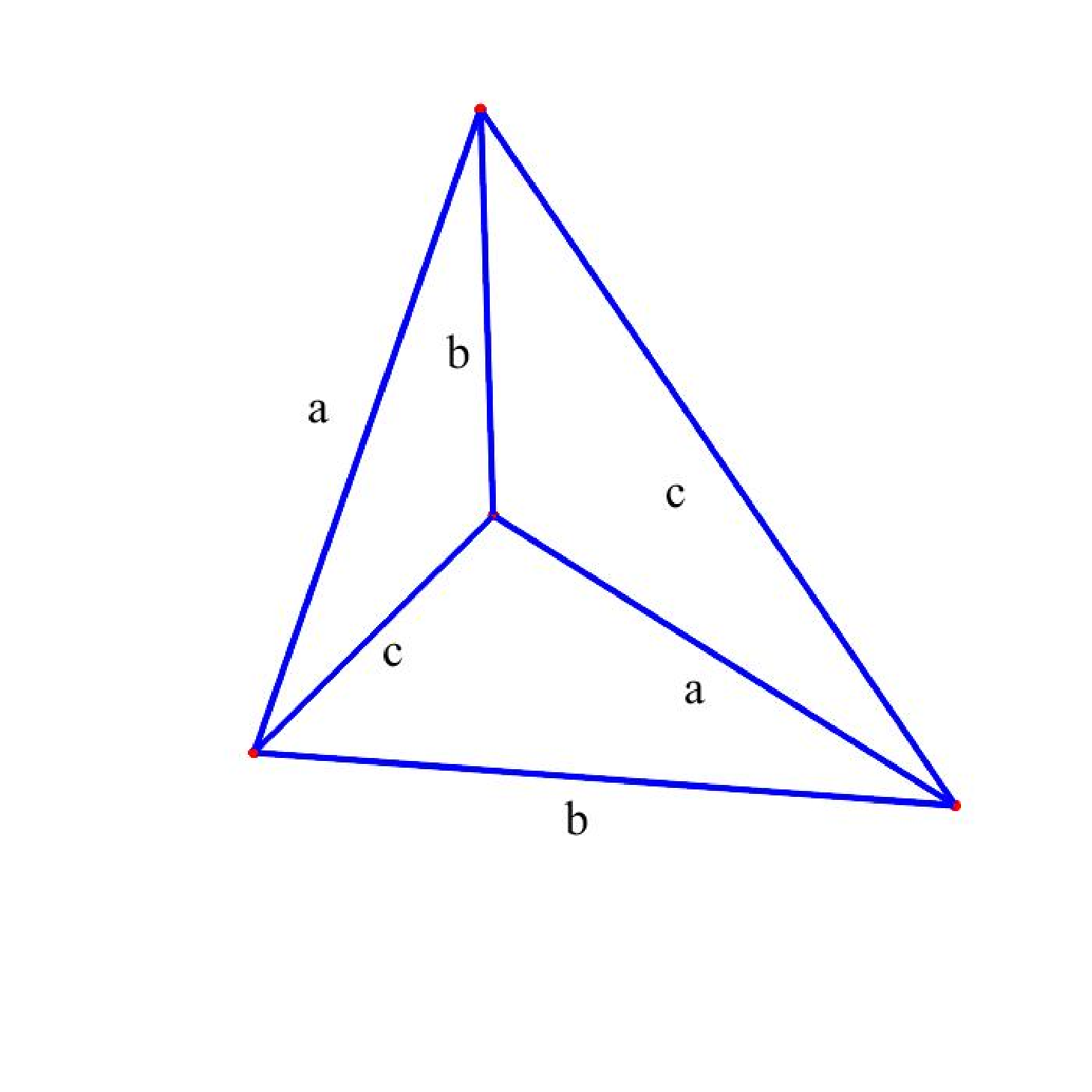} \\ a)}
\end{minipage}
\begin{minipage}[h]{0.45\textwidth}
\center{\includegraphics[width=0.9\textwidth, trim=0in 0in 0mm 0mm, clip]{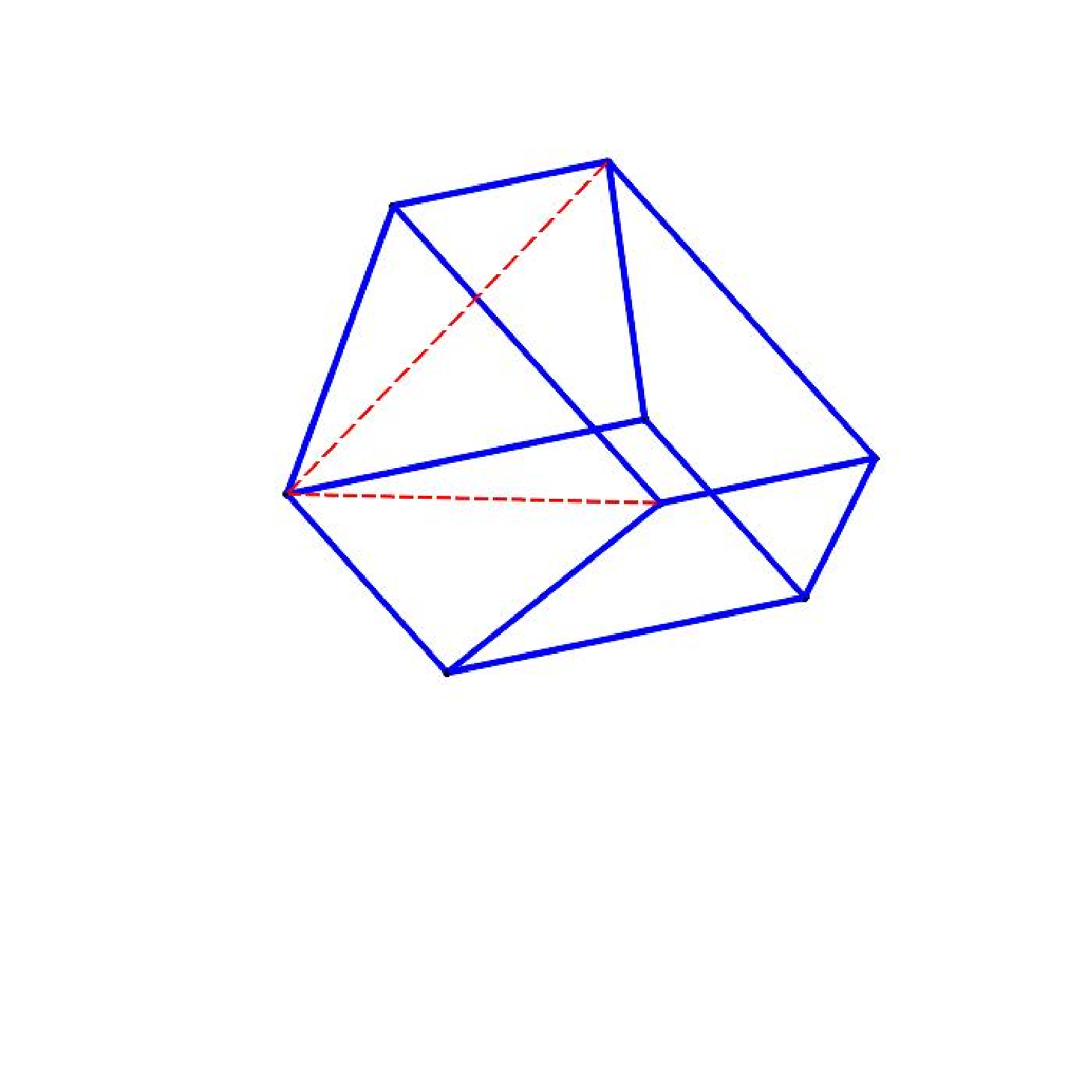} \\ b)}
\end{minipage}
\caption{{{\normalsize{Homogeneous polyhedra:}}
\normalsize{a) homogeneous tetrahedron;
b)~antiprism with nonregular bases.}}}
\label{Fig2}
\end{figure}

\begin{example}\label{ex.par_trun1}
Let $P$ be a regular polyhedron in $\mathbb{R}^3$ with edges of unit length. Let us fix a number $t\in (0,1/2)$ and consider the convex hull $P_t$ of all points
of edges of $P$ on the distance $t$ from the nearest vertex of $P$. It is easy to check that $P_t$ is a homogeneous polyhedron,
which will be called partial truncation
of $P$ (with parameter~$t$).
For the tetrahedron $T$, the polyhedron $T_t$ has four homogeneous $6$-gonal faces and four triangular faces. For the cube $C$, $C_t$ has
$6$ homogeneous $8$-gonal faces and $8$ triangular faces. For the octahedron $O$, $O_t$ has
$8$ homogeneous $6$-gonal faces and $6$ square faces. For the dodecahedron $D$, $D_t$ has
$12$ homogeneous $10$-gonal faces and $20$ triangular faces. For the icosahedron $I$, $I_t$ has
$20$ homogeneous $6$-gonal faces and $12$ pentagonal faces.
\end{example}

\medskip

Isosceles trapezoids are faces of several types of homogeneous polyhedra, for instance, of antiprisms on
a non-regular $2n$-gon with the vertex type $(n,n)$ (i.~e. orbits of the group $D_n$), where  $n\geq 2$; see detail in \cite{RobCar}.
The following explicit example is useful.

\begin{example}\label{ex.isotrap.1}
Let us consider a polytope $P$ that is the convex hull of the following eight points in $\mathbb{R}^3$:
$(\pm a, \pm b, 0)$ and $(\pm b, \pm a, c)$, where $b>a>0$ and $c>0$, see Fig.~\ref{Fig2}~b).
Obviously, $P$ is an antiprism on a rectangle. The bases of $P$ are two rectangles with side lengths $2a$ and $2b$.
The other four faces of $P$ are isosceles trapezoids with base lengths $2a$ and $2b$ and the two other sides of length $\sqrt{2(b-a)^2+c^2}$.
It is easy to see that $P$ is a homogeneous polyhedron.
If an isometry $\tau$ of $P$ fixes the vertex $(a,b,0)$, then it should fix the base of $P$ with this vertex, hence, all vertices of~$P$.
Therefore, the isotropy group at this (hence, every) vertex is trivial. On the other hand,  the points
$(-b,a,c)$ and $(b,-a,c)$ are situated on the sphere with center
$(a,b,0)$ and radius $\sqrt{2a^2+2b^2+c^2}$. Since the isotropy group is trivial, there is no isometry of $P$ that fixes the vertex $(a,b,0)$
and moves the point $(-b,a,c)$ to the point $(b,-a,c)$.
Therefore, $P$ is not $2$-point homogeneous.
\end{example}

Now, we consider two more important examples. Let us recall that the standard hypercube $C_n$ in $\mathbb{R}^n$ is the convex hull of the points
$(\pm 1,\pm 1,\dots,\pm 1)\in \mathbb{R}^n$. Below we consider some variants of the truncation of $C_n$.

At first we consider the convex hull of the points $(\pm 1, \dots, 0, \dots, \pm 1)$, where
exactly one coordinate is $0$, and all other are $\pm 1$.
This polytope is called the fully truncated hypercube and is denoted by $TC_n$, see details in \cite{Zef}.
It is clear that $TC_2$ is a square and $TC_3$ is the cuboctahedron,
see Fig.~\ref{Fig1}~f).

We know that   $TC_2$ and $TC_3$ are $m$-point homogeneous for all natural $m$, see \cite[Proposition 6.1]{BerNik22}. Here we prove the following results.

\begin{prop}\label{pr.trcube.1}
The fully truncated hypercube $TC_n$ is $2$-point homogeneous for $n\geq 2$.
\end{prop}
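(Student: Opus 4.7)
The plan is to use the standard criterion that for a homogeneous polytope, being $2$-point homogeneous is equivalent to the isotropy group at any single vertex $v_0$ acting transitively on each nonempty ``vertex sphere'' $S(v_0,r) \cap V(P)$ (this is the form of Proposition \ref{pr:two-point homogeneous} used already in Example \ref{ex.isotrap.1}). So the task reduces to (i) identifying $\Symm(TC_n)$, and (ii) analyzing how the stabilizer of a chosen vertex partitions the remaining vertices into orbits and comparing that with the partition by distance to $v_0$.

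First I would verify that the full hyperoctahedral group $B_n = S_n \ltimes \mathbb{Z}_2^n$ of signed permutations of coordinates preserves $TC_n$ (clear: permuting coordinates or changing the sign of a coordinate sends a vertex of the form $(\pm 1, \dots, 0, \dots, \pm 1)$ to another such vertex) and acts transitively on the vertex set (first align the $0$-coordinate by a permutation in $S_n$, then match the remaining $\pm 1$ entries by sign changes). Fix now $v_0 = (0,1,1,\dots,1)$. A signed permutation $P_\sigma\, \diag(\epsilon_1,\dots,\epsilon_n)$ stabilizes $v_0$ iff $\sigma(1)=1$ and $\epsilon_j=1$ for all $j\geq 2$; the sign $\epsilon_1$ is irrelevant because it multiplies $0$. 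Thus the stabilizer acts on the last $n-1$ coordinates as the full symmetric group $S_{n-1}$, with no admissible sign flips on those coordinates.

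Next I would split the remaining vertices into two families according to the location of the unique $0$-coordinate: family (I) consists of vertices $w$ with $w_1=0$; family (II) consists of vertices $w$ with $w_j=0$ for some $j\geq 2$ (necessarily unique). A direct computation gives $d(v_0,w)^2=4k$ in family (I), where $k$ is the number of $-1$'s among $w_2,\dots,w_n$, and $d(v_0,w)^2=4k+2$ in family (II), where $k$ is the number of $-1$'s among the nonzero entries of $w_2,\dots,w_n$. Since the sets $\{4k\}$ and $\{4k+2\}$ are disjoint, families (I) and (II) lie on different spheres centered at $v_0$, and within each family the distance is determined entirely by $k$. Transitivity of the stabilizer on each family-(I) sphere is immediate, since $S_{n-1}$ is transitive on the $\binom{n-1}{k}$ subsets of $\{2,\dots,n\}$ of size $k$. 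For family (II), given two such vertices $w,w'$ with zeros at positions $j,j'$ and the same value of $k$, set $\epsilon_1 = w_1 w'_1$ to align the first coordinate, and pick any permutation $\sigma$ of $\{1,\dots,n\}$ fixing $1$, sending $j$ to $j'$, and bijecting the $-1$-positions of $w$ in $\{2,\dots,n\}\setminus\{j\}$ onto those of $w'$ in $\{2,\dots,n\}\setminus\{j'\}$; such $\sigma$ exists because the two sets of $-1$-positions have the same cardinality $k$.

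The only subtle point is the verification that the two families cannot share a sphere, for otherwise one would need to produce an isometry of $TC_n$ mixing them, which the stabilizer above does not do; the parity argument $\{4k\}\cap\{4k+2\}=\emptyset$ settles this cleanly. Everything else is routine bookkeeping about the action of $B_n$ on labelled configurations, so I expect no serious obstacle beyond fixing conventions for signed permutations.
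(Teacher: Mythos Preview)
Your proposal is correct and follows essentially the same route as the paper: fix a vertex, identify the stabilizer in the hyperoctahedral group, split the remaining vertices into two families according to whether their unique $0$-coordinate matches that of the base vertex, observe that the squared distances $4k$ and $4k+2$ separate the families, and then use the $S_{n-1}$-action (together with the free sign flip on the $0$-coordinate) to get transitivity within each sphere. The only cosmetic difference is that the paper puts the $0$ of the base vertex in the last coordinate rather than the first.
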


\begin{proof}
It is clear that the isometry group $\Isom(TC_n)$ of $TC_n$ coincides with the isometry group of $C_n$.
Since $C_n$ is $2$-point homogeneous, then $TC_n$ is homogeneous
(if an isometry moves one edge to an another one, then it moves the midpoint of the first edge
to the midpoint of the second edge).
Let us fix the vertex $A_0=(1,1,\dots,1,0) \in TC_n$.
The isotropy subgroup $\I(A_0)$ is generated by all substitutions of $n-1$ first coordinates and by the isometry
$(x_1,\dots, x_{n-1}, x_n) \mapsto (x_1,\dots, x_{n-1}, -x_n)$ (the multiplication of the last coordinate by $-1$).
We consider two kinds of vertices for~$TC_n$ with respect to $A_0$.

A vertex  $A\in TC_n$ is of the first kind if there are exactly one $0$ and $k$ numbers $-1$ among the first $n-1$ coordinates of $A$.
It is clear that $d(A_0,A)=\sqrt{4k+2}$ for such a vertex.
A vertex  $A\in TC_n$ is of the second kind if the $n$-th its coordinate is $0$ and there are exactly $l$ numbers $-1$ among the first $n-1$ coordinates of $A$.
It is clear that $d(A_0,A)=\sqrt{4l}$ for such a vertex.
Obviously, $\sqrt{4k+2}\neq \sqrt{4l}$ for all natural $k$ and $l$ ($1\neq 2(l-k)$).
Therefore, the vertices of different kinds could not be situated on one distant sphere $S(A_0,r)$ with the center $A_0$.

Now, if we have two vertices $A_1$ and $A_2$ of the first kind on the sphere $S(A_0,r)$ with center $A_0$ and radius $r>0$,
then these points have equal numbers of $-1$ among the first $n-1$ coordinates (this number is $(r^2-2)/4$).
Therefore, there is a substitution of the first $n-1$ coordinates that moves $0$ and all $-1$ of $A_1$ respectively to $0$ and all $-1$ of $A_2$.
Moreover, we can change the sign of the last coordinates (if it is needed). Therefore, there is an isometry $\tau \in \I(A_0)$ that moves $A_1$ to $A_2$.

If we have two vertices $A_1$ and $A_2$ of the second kind on the sphere $S(A_0,r)$ with center $A_0$ and radius $r>0$,
then these points have equal numbers of $-1$ among the first $n-1$ coordinates (this number is $r^2/4$).
Therefore, there is a substitution of the first $n-1$ coordinates that moves all $-1$ of $A_1$ to all $-1$ of $A_2$.
Therefore, in this case we also have an isometry $\tau \in \I(A_0)$ that moves $A_1$ to $A_2$.
Hence, the proposition is proved.
\end{proof}
\smallskip

\begin{figure}[t]
\begin{minipage}[h]{0.45\textwidth}
\center{\includegraphics[width=0.85\textwidth, trim=0in 0in 0mm 0mm, clip]{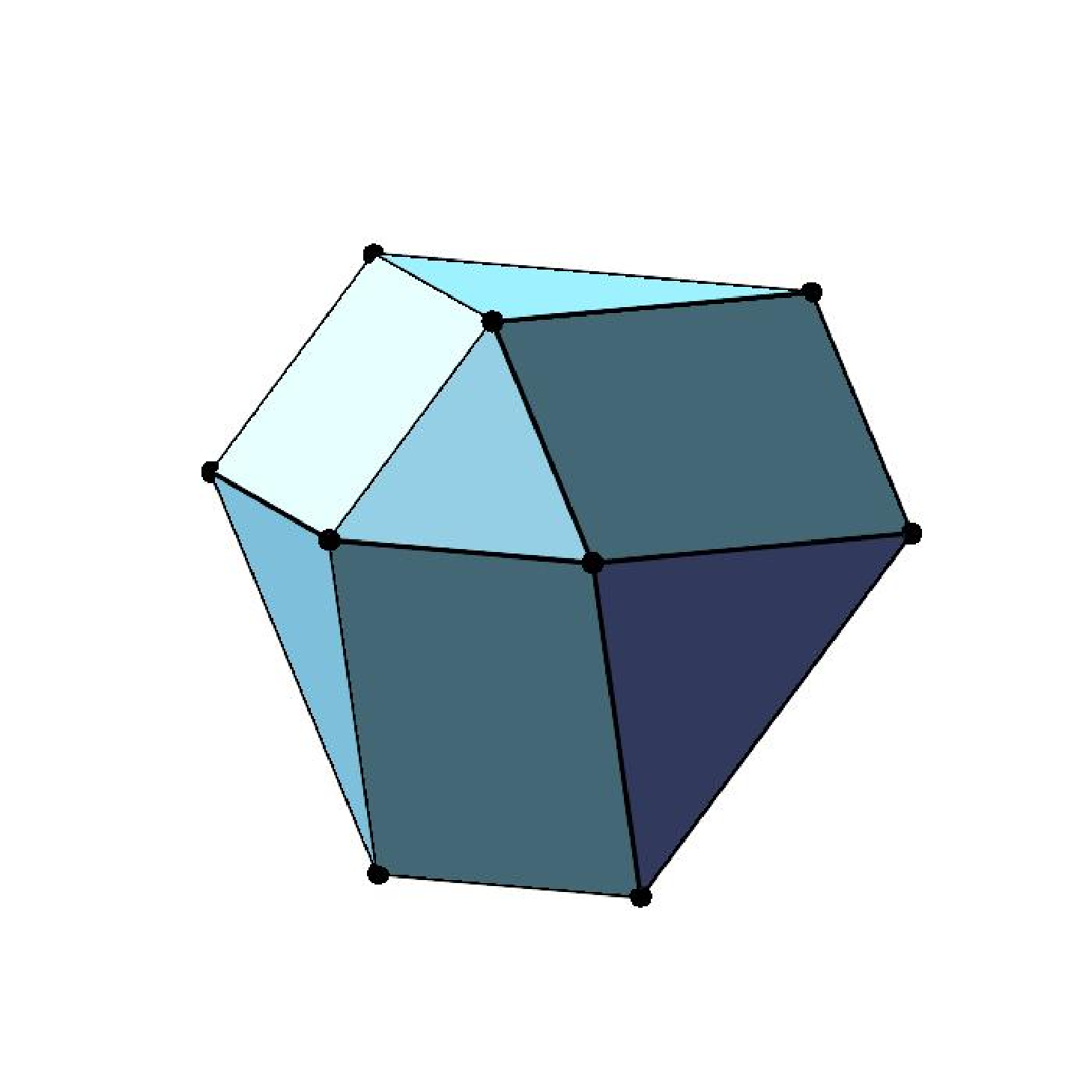} \\ a)}
\end{minipage}
\begin{minipage}[h]{0.45\textwidth}
\center{\includegraphics[width=0.78\textwidth, trim=0in 0in 0mm 0mm, clip]{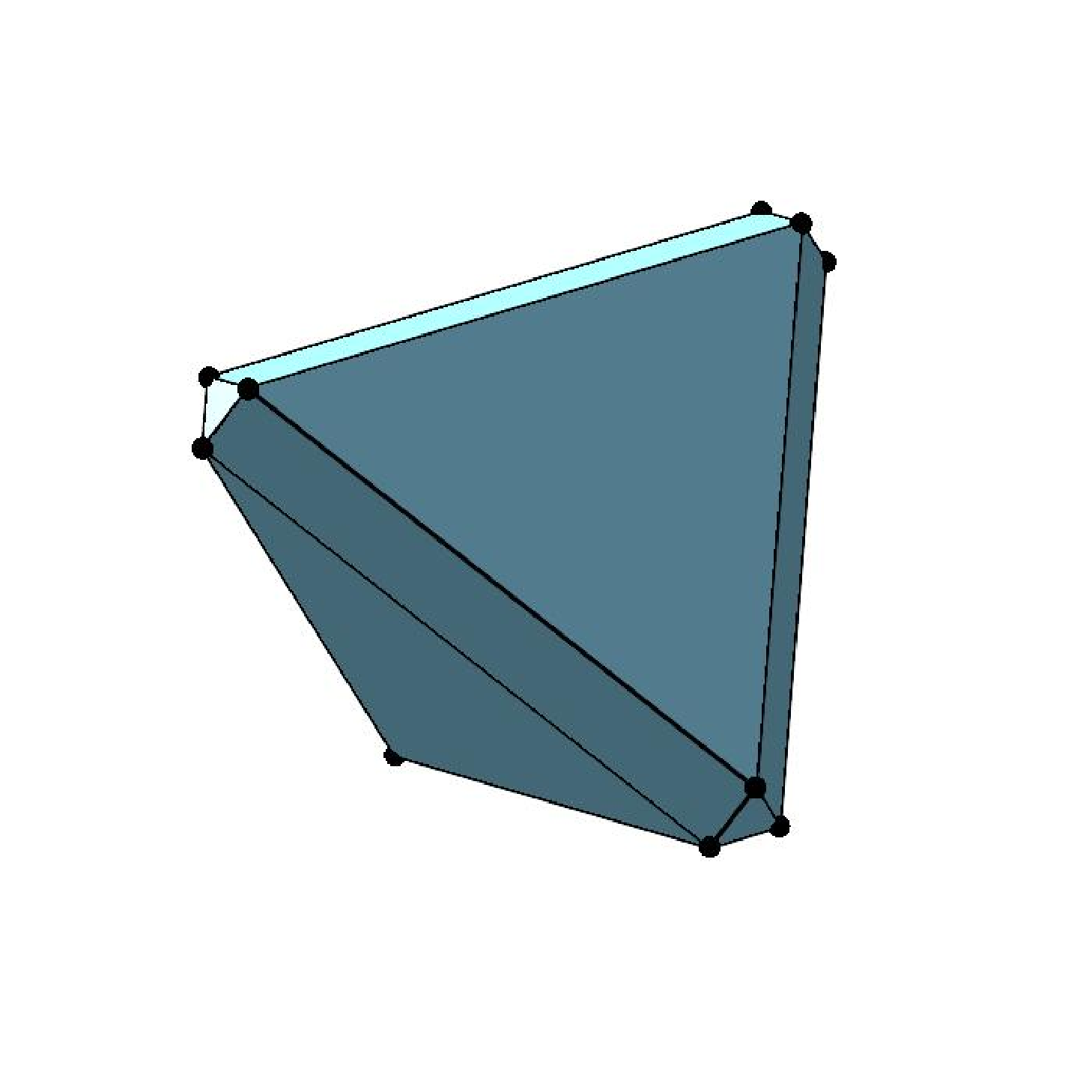} \\ b)}
\end{minipage}
\caption{{{\normalsize{Nonregularly truncated cubes $TC_3(c)$:}}
\normalsize{a) for $c=1/4$;
b)~for $c=7/9$.}}}
\label{Fig4}
\end{figure}

Now, let us fix a number $c\in(0,1)$
and consider the polyhedron $TC_n(c)$ that is the convex hull of the points $(\pm 1,\dots, \pm c,\dots,\pm 1)\in \mathbb{R}^n$,
where exactly one coordinate has the absolute value $c$, and all other are $\pm 1$, and the common quantity of the signs $``-"$ is even.

We will call any polytope $TC_n(c)$ for $c\in (0,1)$ a {\it nonregularly truncated hypercube}, see Fig.~\ref{Fig4}.
For $n=1$ we have a one-point set, for $n=2$ we get a rectangle, distinct from squares.

If we consider a limiting case $c=1$ for $TC_n(c)$, then we get the $n$-dimensional demihypercube $DC_n$.
We know, that $DC_n$ for $n = 1, 2, 3$ is $m$-point homogeneous for any natural $m$, while
$DC_n$ is $3$-point homogeneous but is not $4$-point homogeneous for all $n \geq 4$, see \cite[Corollary~6]{BerNik22}.
Another limiting case is $c=0$, where we get the fully truncated hypercube $TC_n$.

\begin{prop}\label{ex.trcube.1}
Any nonregularly truncated hypercube $TC_n(c)$, $c\in (0,1)$, is homogeneous, but is not $2$-point homogeneous for $n\geq 3$.
\end{prop}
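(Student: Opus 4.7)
The plan is to split the argument into two parts: homogeneity of $TC_n(c)$ for all $n\ge 2$, and failure of $2$-point homogeneity for $n\ge 3$.

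For homogeneity, I would exhibit a transitive subgroup of $\Symm(TC_n(c))$. The group $G$ of signed coordinate permutations $(x_1,\dots,x_n)\mapsto(\epsilon_1 x_{\sigma(1)},\dots,\epsilon_n x_{\sigma(n)})$ with $\sigma\in S_n$ and $\prod_i\epsilon_i=1$ clearly preserves both defining conditions of the vertex set: exactly one coordinate has absolute value $c$, and the total number of minus signs is even. Given two vertices $v,w$, first apply the transposition that swaps the positions of their $\pm c$-coordinates; the remaining task reduces to aligning signs on the other coordinates. Since $v$ and $w$ both have an even number of minus signs, so does the set of positions where they disagree, hence the required sign change lies in~$G$.

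For the failure of $2$-point homogeneity, fix $A_0=(c,1,1,\dots,1)$ and consider the two vertices $B=(-1,c,-1,1,\dots,1)$ and $C=(1,-c,-1,1,\dots,1)$, which satisfy $d(A_0,B)^2=d(A_0,C)^2=2c^2+6$. By Proposition~\ref{pr:two-point homogeneous}, it suffices to prove that no isometry $f$ of $TC_n(c)$ satisfies $f(A_0)=A_0$ and $f(B)=C$. I would argue this purely metrically, bypassing the need to describe the full symmetry group. Introduce the auxiliary vertex $A_1=(c,-1,-1,1,\dots,1)$, for which $d(A_0,A_1)=\sqrt{8}$, $d(B,A_1)=\sqrt{2}\,(1+c)$, and $d(C,A_1)=\sqrt{2}\,(1-c)$. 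A hypothetical $f$ as above would force $V:=f(A_1)$ to be a vertex with $d(A_0,V)=\sqrt{8}$ and $d(C,V)=\sqrt{2}\,(1+c)$, which is what I aim to refute.

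The main technical step is the classification of all vertices at distance $\sqrt{8}$ from $A_0$. A case split by the position of the $\pm c$-coordinate of the candidate, the sign of its first coordinate, and the parity condition shows that for every $c\in(0,1)$ these are exactly the vertices $A_{ij}$ having $c$ at position~$1$, $-1$ at two positions $2\le i<j\le n$, and $+1$ elsewhere. One then computes $d(C,A_{ij})^2$ in the four sub-cases determined by whether $\{i,j\}$ equals $\{2,3\}$, meets $\{2,3\}$ only at $2$, only at $3$, or is disjoint from $\{2,3\}$; the four resulting values $2(1-c)^2$, $2(1-c)^2+8$, $2c^2+6$, $2c^2+14$ all differ from $2(1+c)^2=2c^2+4c+2$ on $(0,1)$, yielding the required contradiction. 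The principal obstacle is the vertex enumeration: the even-parity condition is crucial for ruling out a spurious candidate family (with $v_1=-1$ and a $-c$-coordinate elsewhere) that would otherwise produce extra vertices at distance $\sqrt{8}$ from $A_0$ precisely at the borderline value $c=\sqrt{2}-1$.
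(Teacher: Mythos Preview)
Your proposal is correct. The homogeneity half is essentially the paper's argument: the paper also uses the group generated by coordinate permutations and even sign changes (it identifies this with $\Isom(DC_n)$) and asserts transitivity.

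For the failure of $2$-point homogeneity your route genuinely differs from the paper's. The paper simply \emph{asserts} that $\Isom(TC_n(c))$ coincides with the signed-permutation group above, reads off the isotropy subgroup $\I(A_0)$ of $A_0=(1,\dots,1,c)$ as the permutation group of the first $n-1$ coordinates, and then observes that no such permutation can send $A_1=(1,\dots,1,-1,c,-1)$ to $A_2=(1,\dots,1,-1,-c,1)$. This is a two-line argument once one grants the description of the full isometry group, which the paper does not justify in detail. Your approach trades that unproved structural claim for a longer but self-contained metric computation: you pick an auxiliary vertex $A_1$ with $d(B,A_1)\neq d(C,A_1)$, enumerate all vertices at distance $\sqrt{8}$ from $A_0$, and check that none of them has the required distance $\sqrt{2}(1+c)$ from $C$. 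The payoff is that you never need to know $\Isom(TC_n(c))$ exactly; the cost is the case analysis (and the care with the parity constraint at $c=\sqrt{2}-1$, which you handle correctly). Both arguments use the same pair of equidistant vertices; they diverge only in how the nonexistence of the required isometry is established.
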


\begin{proof}
It is clear that the isometry group of $TC_n(c)$ is a subgroup of the isometry group of $C_n$.
More exactly $\Isom(TC_n(c))$ is generated by all substitutions of coordinates and by multiplications of even quantity of coordinates by $-1$
(the same isometry group has the $n$-dimensional demihypercube $DC_n\subset C_n$).
Therefore, $TC_n(c)$ is homogeneous.

Let us fix the vertex $A_0=(1,1,\dots,1,c) \in TC_n(c)$.
The isotropy subgroup $\I(A_0)$ is the group of all substitutions of $n-1$ first coordinates.
Now, let us consider the vertices  $A_1=(1,\dots,1,-1,c,-1)$ and $A_2=(1,\dots,1,-1,-c,1)$ in  $TC_n(c)$.
It is clear that $d(A_0,A_1)=d(A_0,A_2)=\sqrt{6+2c^2}$. On the other hand, (since $0<c<1$) there is no isometry from $\I(A_0)$ that moves $A_1$ to $A_2$.
Therefore, $TC_n(c)$, $c\in (0,1)$, is not $2$-point homogeneous for $n\geq 3$ by Proposition \ref{pr:two-point homogeneous}.
\end{proof}

\begin{remark}\label{rem.5triangle1}
For  $c\in (0,1)$,  $TC_3(c)$ has $4$ faces that are regular triangles with the side length $\sqrt{2}\cdot(1-c)$,
$4$ faces that are regular triangles with the side length $\sqrt{2}\cdot(1+c)$, and $6$ faces that are rectangles with the side lengths
$\sqrt{2}\cdot(1-c)$, $\sqrt{2}\cdot(1+c)$. Each of the latter six faces can be divided in two right triangles such that any vertex of
$TC_3(c)$ is incident to five triangles. Hence we get a degenerate homogeneous polyhedron with all triangular faces some of that are right
triangles.
\end{remark}

\section{On stable faces of a homogeneous polytope}\label{sec.3}

\begin{prop}\label{pr.grav.1}
For any homogeneous polytope $P$ in $\mathbb{R}^n$, the center $O$ of $P$, that is the center of mass of all vertices (with equal masses) of $P$
is also the center of mass of $P$, supplied with the unit density.
\end{prop}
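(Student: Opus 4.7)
The plan is to interpret both centers as fixed points of the symmetry group $\Symm(P)$ and to show that $\Symm(P)$ admits no other fixed direction inside the affine hull of the vertices.

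First I would verify that both $O$ and the solid centroid $O^{\ast}:=\frac{1}{\operatorname{vol}(P)}\int_P x\,dx$ are invariant under each $\sigma\in\Symm(P)$. Invariance of $O$ is immediate, since $\sigma$ permutes the vertices $x_1,\dots,x_q$ and $O$ is their symmetric mean. For $O^{\ast}$ I would translate coordinates so that $O$ is the origin; by Proposition \ref{pr.efhs}, every element of $\Symm(P)$ is then an orthogonal linear map, and a change of variable $y=\sigma(x)$ (using $\sigma(P)=P$ and $|\det\sigma|=1$) yields $\sigma(O^{\ast})=\frac{1}{\operatorname{vol}(P)}\int_{\sigma(P)}y\,dy=O^{\ast}$. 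After this reduction $\sum_k x_k=0$ and $O^{\ast}$ lies in the linear span $L$ of the vertices, which is itself $\Symm(P)$-invariant.

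The central step is to show that the subspace $V\subseteq\mathbb{R}^n$ of common fixed vectors of $\Symm(P)$ satisfies $V\cap L=\{0\}$; since $O^{\ast}\in L$ is $\Symm(P)$-fixed, this will force $O^{\ast}=0=O$. Suppose, for a contradiction, that some nonzero $v\in V\cap L$ exists. Because each $\sigma\in\Symm(P)$ is orthogonal and fixes $v$, it preserves the decomposition $L=\mathbb{R}v\oplus(v^{\perp}\cap L)$; vertex-transitivity then forces every $x_k$ to have the same $\mathbb{R}v$-component $\alpha v$. Summing over $k$ and using $\sum_k x_k=0$ gives $q\alpha v=0$, so $\alpha=0$ and every vertex lies in $v^{\perp}\cap L$, a proper subspace of $L$. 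This contradicts the fact that $L$ is spanned by the vertices.

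The main obstacle is precisely this last step: $\Symm(P)$-invariance of $O^{\ast}$ alone does not determine it, because the common fixed set of $\Symm(P)$ could a priori be a nontrivial affine subspace through $O$. What makes the argument close is the interplay between vertex-transitivity and the identity $\sum_k x_k=0$, which together forbid any nonzero fixed direction inside the affine hull of the vertex set and thereby single out $O$ as the unique common fixed point of $\Symm(P)$ in that affine hull.
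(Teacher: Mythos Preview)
Your argument is correct. Both you and the paper start from the same observation---that the vertex centroid $O$ and the solid centroid $O^{\ast}$ are each fixed by every $\sigma\in\Symm(P)$---and then use vertex-transitivity to force $O^{\ast}=O$. The execution, however, differs. The paper argues geometrically: since $\Symm(P)$ fixes $O^{\ast}$ and acts transitively on the vertices, all vertices lie on a sphere about $O^{\ast}$ as well as on the sphere about $O$; assuming $P$ has nonempty interior, two distinct concentric spheres would force all vertices into a hyperplane, a contradiction. Your route is linear-algebraic: you show that the common fixed subspace $V$ of $\Symm(P)$ meets the linear span $L$ of the vertices only in $0$, by noting that transitivity makes the $v$-component of every vertex equal, and then $\sum_k x_k=0$ kills that component. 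Your version has the mild advantage of working uniformly inside $L$ without a separate reduction to the full-dimensional case, while the paper's sphere argument is shorter and more visual. Both are complete.
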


\begin{proof} Without loss of generality, we may suppose that the interior of $P$ is non-empty.
Let us denote the center of mass of $P$ by $\widetilde{O}$. It is clear that $O$ and $\widetilde{O}$ both are preserved by any isometry of $P$,
in particular, all vertices of $P$
situated on some sphere with center $O$ and on  some sphere with center $\widetilde{O}$. It is clear that these two spheres are coincide.
Hence, $\widetilde{O}=O$.
\end{proof}
\smallskip

Let us recall the following definition.

\begin{definition}\label{de.stab.1}
A facet $F$ of a convex polytope $P$ in $\mathbb{R}^n$ is called {\it stable}, if the orthogonal projection of the center of mass of $P$ to the hyperplane through
$F$ is in the interior of~$F$.
\end{definition}

It is known that any convex polytope has at least one stable facet. Any convex polygon has at least two stable sides.
On the other hand, there are polytopes with exactly one stable facet (unistable polytopes) in $\mathbb{R}^n$, $n \geq 3$.
For instance, there are unistable polyhedra in $\mathbb{R}^3$ with $14$ faces and more, see \cite{Resh14}, where much additional information on stable faces and
unistable polytopes can be found.

\begin{remark}
Any homogeneous (proper) polytope $P$ in $\mathbb{R}^n$ has at least $2$ stable faces. It is clear that $P$ has one stable face $F$. Since $P$ is a proper
polytope in $\mathbb{R}^n$, then there is a vertex $V$ of $P$ that is not in the hyperplane, containing $F$. Since $P$ is homogeneous, it has some isometry $\psi$
that moves a vertex of $F$ to $V$, then (due to homogeneity) the face $\psi(F)$ is also stable and $\psi(F)\neq F$.
\end{remark}

\medskip

\begin{figure}[t]
\begin{minipage}[h]{0.45\textwidth}
\center{\includegraphics[width=0.92\textwidth, trim=0in 0in 0mm 0mm, clip]{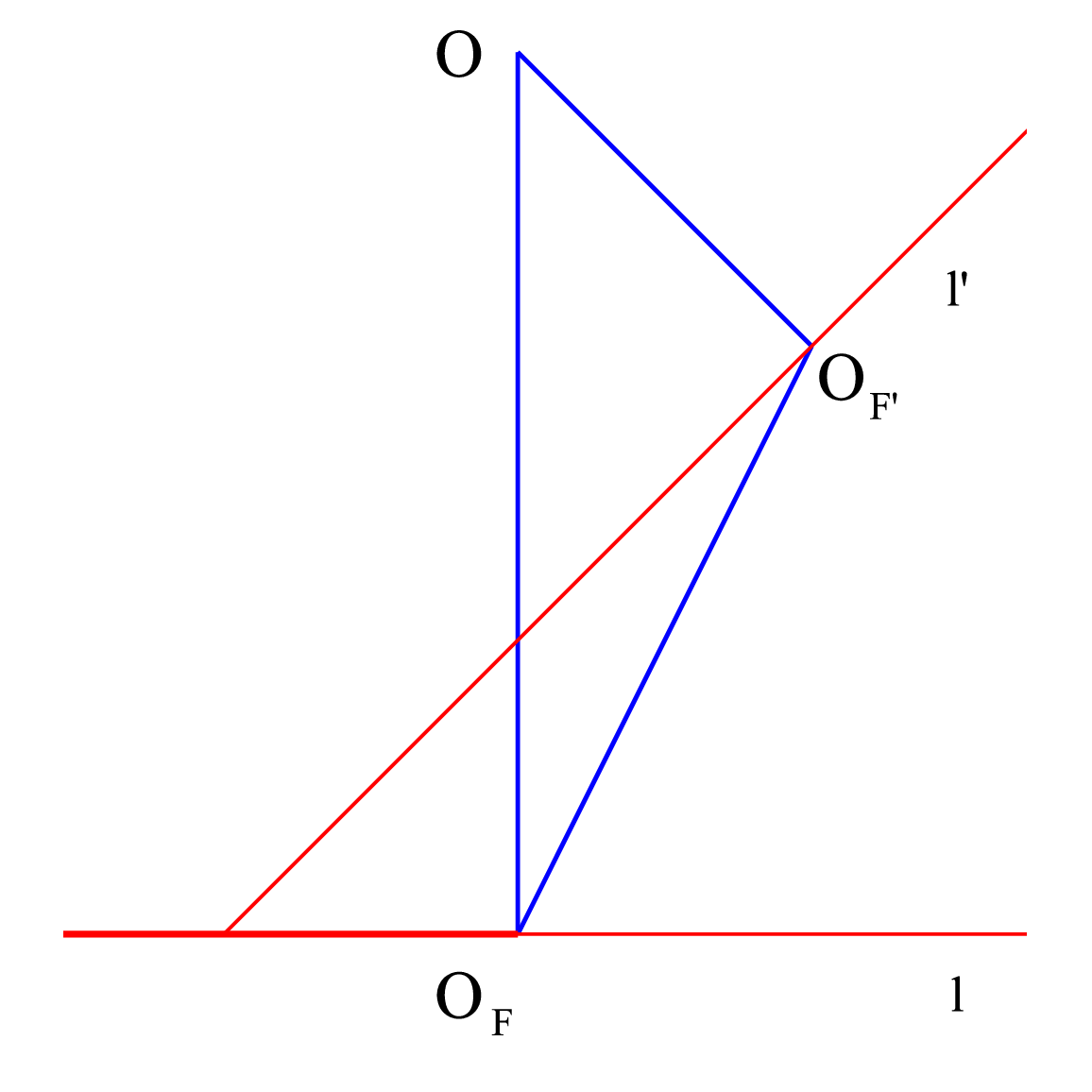} \\ a)}
\end{minipage}
\begin{minipage}[h]{0.45\textwidth}
\center{\includegraphics[width=0.9\textwidth, trim=0in 0in 0mm 0mm, clip]{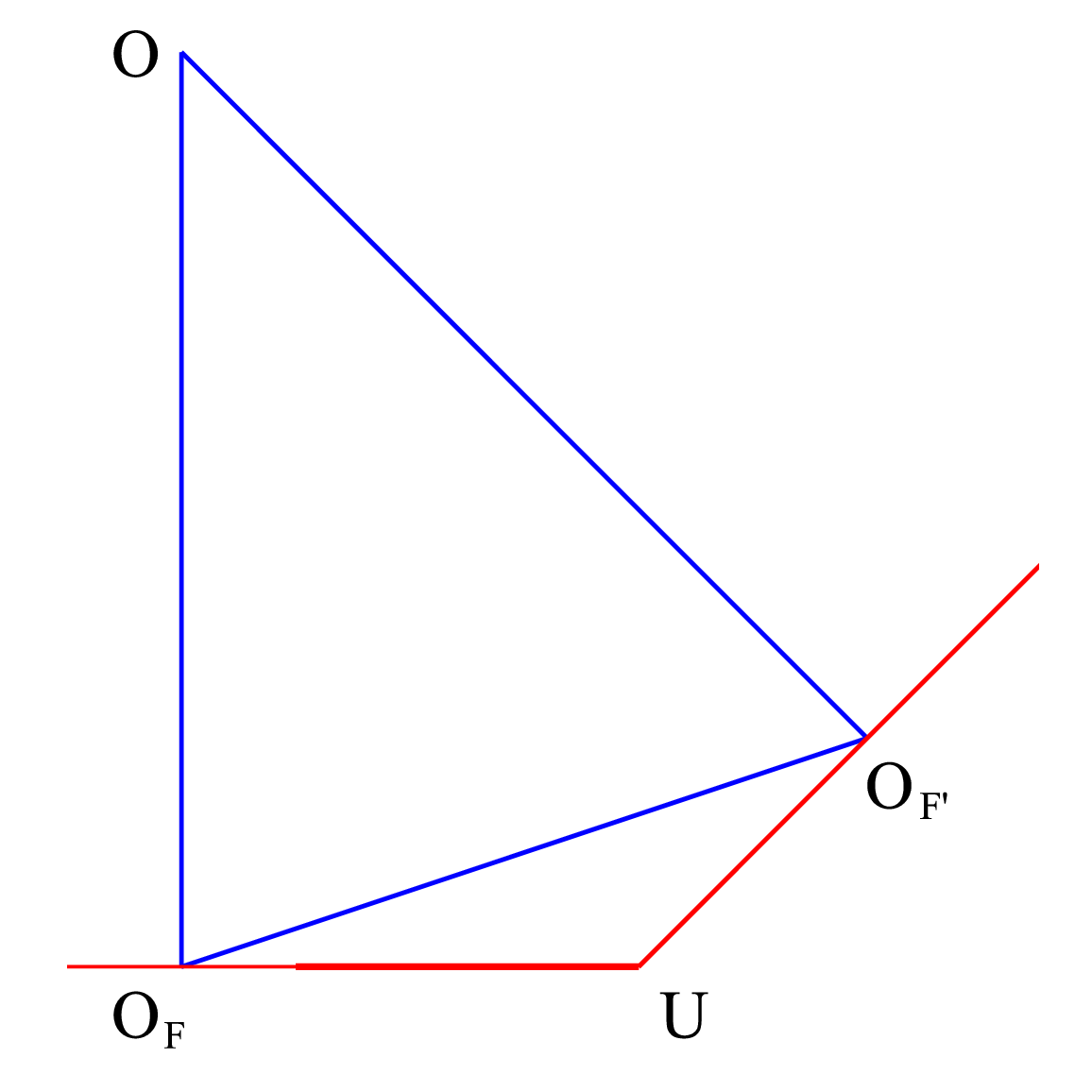} \\ b)}
\end{minipage}
\caption{{{\normalsize{On stable faces of homogeneous polytopes.}
}}}
\label{Fig5}
\end{figure}

Let us recall that any homogeneous polytope in $\mathbb{R}^n$ is inscribed in a suitable sphere $S=S(O,r)$ with the center $O$ and radius $r$
(this means that every vertex of $P$ is in the sphere $S$).
Moreover, any facet $F$ is inscribed in the sphere $S\cap L_F$ in the hyperplane $L_F$ through $F$ with the center $O_F$ such that $\overrightarrow{OO_F}$
is orthogonal to $L_F$
and radius $\sqrt{r^2-d^2(O,O_F)}$.

\begin{prop}\label{pr.grav.2}
Let $P$ be a homogeneous polytope in $\mathbb{R}^n$ with the center $O$, $F$ be a facet of $P$ with the center $O_F$ in the hyperplane through $F$.
Then $F$ is stable if and only if for any facet $F'$ of $P$ with the center $O_{F'}$ in the hyperplane through $F'$,
the inequality $d^2(O,O_F) < d^2(O,O_{F'})+d^2(O_F,O_{F'})$ holds.
\end{prop}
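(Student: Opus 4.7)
My plan is to reduce stability of $F$ to the condition $O_F\in\Int F$ and then translate each half-space defining $\Int F$ into the claimed inequality via the polarization identity. First I would invoke Proposition~\ref{pr.grav.1} to identify the center of mass of $P$ with the circumcenter $O$. Since $\overrightarrow{OO_F}$ is orthogonal to $L_F$ (the description of $O_F$ recalled just before the proposition), $O_F$ is precisely the orthogonal projection of $O$ onto $L_F$, so $F$ is stable iff $O_F\in\Int F$, where $F$ is viewed as an $(n-1)$-dimensional polytope in $L_F$.

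Next I would describe $\Int F$ as an intersection of open half-spaces in $L_F$. Each facet of $F\subset L_F$ has the form $G=F\cap F'$ for some facet $F'\neq F$ of $P$ and lies in the hyperplane $H_{F'}:=L_F\cap L_{F'}$ of $L_F$; since $L_{F'}$ supports $P$ and $O\in\Int P$, the vector $O-O_{F'}$ is an inward normal to $L_{F'}$ at $O_{F'}$, and the corresponding open half-space in $L_F$ is $\{y\in L_F:\langle y-O_{F'},\,O-O_{F'}\rangle>0\}$. Consequently $O_F\in\Int F$ iff $\langle O_F-O_{F'},\,O-O_{F'}\rangle>0$ for every $F'$ producing a facet of $F$. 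Applying the polarization identity
$$\langle O_F-O_{F'},\,O-O_{F'}\rangle=\tfrac12\bigl(d^2(O,O_{F'})+d^2(O_F,O_{F'})-d^2(O,O_F)\bigr)$$
converts this into exactly $d^2(O,O_F)<d^2(O,O_{F'})+d^2(O_F,O_{F'})$. For the remaining facets $F'\neq F$ (those for which $F\cap L_{F'}$ is a proper face of $F$ of codimension $\ge 2$, or is empty), $\Int F$ still lies strictly on the $O$-side of $L_{F'}$, so the same inequality is automatic whenever $O_F\in\Int F$; hence the quantifier in the proposition may safely range over all facets $F'\neq F$.

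The main obstacle I anticipate is the careful bookkeeping between hyperplanes of $\mathbb{R}^n$ and hyperplanes of $L_F$: one must check that the open $O$-side of $L_{F'}$ intersected with $L_F$ is exactly the correct side of $H_{F'}$ in $L_F$, and that these half-spaces (over facet-contributing $F'$) really exhaust the constraints defining $\Int F$. Everything else amounts to Proposition~\ref{pr.grav.1} together with the polarization identity.
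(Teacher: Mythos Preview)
Your approach is correct and is essentially the same as the paper's: both reduce stability to $O_F\in\Int F$ and then convert the distance inequality $d^2(O,O_F)<d^2(O,O_{F'})+d^2(O_F,O_{F'})$ into the statement that $O_F$ lies on the $O$-side of the supporting hyperplane $L_{F'}$. The only difference is presentational---the paper works geometrically in the $2$-plane through $O,O_F,O_{F'}$ and phrases the condition as $\angle OO_{F'}O_F<\pi/2$, whereas you use the polarization identity $\langle O_F-O_{F'},\,O-O_{F'}\rangle=\tfrac12\bigl(d^2(O,O_{F'})+d^2(O_F,O_{F'})-d^2(O,O_F)\bigr)$ directly; these are the same computation.
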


\begin{proof}
Let us suppose that there is a facet $F'$ of $P$ with the center $O_{F'}$ in the hyperplane through $F'$,
such that $d^2(O,O_F) \geq d^2(O,O_{F'})+d^2(O_F,O_{F'})$.
The above inequality means that $\angle OO_{F'}O_F \geq \pi/2$.
Denote by $L$ and $L'$ the hyperplanes through $F$ and $F'$ respectively (in particular, $L$ and $L'$ are supporting hyperplanes for $P$).
It should be noted that $L'$ is not parallel to $L$ (otherwise, $O$ is situated on the segment $[O_F, O_{F'}]$ and $d^2(O,O_F) < d^2(O,O_{F'})+d^2(O_F,O_{F'})$).
The points $O$, $O_F$, $O_{F'}$ are situated in a $2$-dimensional plane $OO_FO_{F'}$.
Denote by $l$ and $l'$ the intersections of this plane with $L$ and $L'$ respectively.
It is clear that the straight line $OO_F$ is orthogonal to $l$ and the straight line $OO_{F'}$ is orthogonal to $l'$.
Obviously, $O$ and $F \cap l$ are situated in the same closed half-plane in the plane $OO_FO_{F'}$ with respect to $l'$
(since $l' \subset L'$ and $L'$ is a supporting hyperplane for $P$).
Moreover, the intersection of the interior of $F$ with the plane $OO_FO_{F'}$ is in the same open half-plane  relative to $l'$ that contains $O$.
Since $\angle OO_{F'}O_F \geq \pi/2$, then
$O_F$ is not situated in the same open half-plane, see Fig.~\ref{Fig5}~a).
This means that $O_F$ is not in the interior of $F$, hence, the facet $F$ is not stable.

Now, suppose that $F$ is not stable. Then there is a facet $F^{\ast}$ of $F$ ($\dim \, F^{\ast} =n-2$)
such that the interior of $F$ is situated in an open half-space $HS$,
that is determined by $L^{\ast}$ in the hyperplane through $F$,  where $L^{\ast}$ is a $(n-2)$-dimensional plane through $F^{\ast}$, while $O_F \not\in HS$.

There is a facet $F'\neq F$ of $P$ (with the center $O_{F'}$ in the hyperplane through $F'$) such that $F^{\ast}$ is a facet of $F'$.
Let $\widetilde{P}$ be an orthogonal projection of $P$ to $L$, where $L$ is the $2$-dimensional plane $OO_FO_{F'}$. It is clear that the image of $F^{\ast}$
under this projection is a point $U \in L$, that is in the intersection of the corresponding projections of $F$ and $F'$, see Fig.~\ref{Fig5}~b).
We see that $\angle U O_{F'} O =\pi/2$ and, hence, $\angle O_F O_{F'} O \geq \pi/2$ (or, equivalently,
$d^2(O,O_F) \geq d^2(O,O_{F'})+d^2(O_F,O_{F'})$) by the choice of
$F^{\ast}$ and $F'$.
\end{proof}

\begin{remark}\label{re.hom.simp}
It is clear that any homogeneous simplex $P$ in $\mathbb{R}^3$ has acute triangular faces that are isometric each to other.
Indeed, otherwise $P$ has no one stable face (due to the homogeneity all faces are equivalent).
\end{remark}

\begin{remark}\label{re.right.ant}
Let $P$ be a right antiprism in $\mathbb{R}^3$ with $n$-gonal bases, $n \geq 2$.
It is easy to see that lateral faces are acute isosceles triangles for $n=2$ (otherwise $P$, that is a homogeneous simplex, has no stable face).
On the other hand, lateral faces can be acute, right, and obtuse (isosceles) triangles for different heights of $P$ for $n\geq 3$.
If lateral faces are obtuse (isosceles) triangles, then $P$ has exactly $2$ stable faces.
\end{remark}

\section{Some properties of $2$-point homogeneous polytopes in $\mathbb{R}^n$}\label{sec.4}


Let us consider any homogeneous metric space $(M,d)$ with the isometry group $\Isom(M)$.
For any $x\in M$, the group $\I(x)=\{\psi \in \Isom(M) \,|\, \psi(x)=x \}$ is called {\it the isotropy subgroup at the point $x$}.
If $\eta \in \Isom(M)$ is such that $y=\eta (x)$, then, obviously, $\I(y)= \eta \circ \I(x) \circ \eta^{-1}$. Hence, the isotropy subgroups for every two points of
a given homogeneous finite metric $M$ space are conjugate each to other in the isometry group.
In particular, the cardinality of $M$ is the quotient of the cardinality of $\Isom(M)$ by the cardinality of $\I(x)$ (for any $x\in M$).

Let $M$ be a vertex set of a homogeneous polytope $P$ in Euclidean space $\mathbb{R}^n$.
Let us consider all possible distances between distinct points of $M$: $0<d_1<d_2<\cdots < d_s$.
In the case of a regular tetrahedron, for example, we obviously have the equality $s=1$.
\smallskip

For a given $x\in M$ and $r >0$ we consider $S(x,r)=\{y\in M\,|\,d(x,y)=r\}$ (the sphere in $M$ with the center $x$ and radius $r$).
It is clear that  $S(x,r) \neq \emptyset$ if and only if $r \in \{0, d_1, d_2, \dots, d_s\}$.

\smallskip
\begin{prop}[\cite{BerNik22}]\label{pr:two-point homogeneous}
A homogeneous metric space $(M,d)$ is $2$-point homogeneous if and only if for any point $x\in M$
the following property holds: for every $r >0$ and every $y,z \in S(x,r)$, there is
an $f \in \I(x)$ such that $f(y)=z$. In other words, a homogeneous metric space $(M,d)$ is $2$-point homogeneous if and only if
for any point $x\in M$, the isotropy subgroup $\I(x)$ acts transitively on every sphere $S(x,r)$, $r>0$.
\end{prop}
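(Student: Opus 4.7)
The plan is to prove the two directions separately, exploiting the definition of $m$-point homogeneity with $m=2$ and the conjugation relation between isotropy subgroups that was recalled just above the statement.

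For the forward direction, I assume $(M,d)$ is $2$-point homogeneous and fix any $x \in M$, $r>0$, and $y,z \in S(x,r)$. I then form the two ordered pairs $(A_1,A_2) = (x,y)$ and $(B_1,B_2) = (x,z)$ and check the pairwise-distance hypothesis of Definition~\ref{de:mpoint}: $d(A_1,A_2) = r = d(B_1,B_2)$, and the diagonal equalities $d(A_i,A_i)=0=d(B_i,B_i)$ are trivial. Applying $2$-point homogeneity yields an isometry $f$ with $f(x)=x$ and $f(y)=z$, so $f \in \I(x)$ and $f(y)=z$, as required.

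For the reverse direction, I assume transitivity of each $\I(x)$ on each sphere $S(x,r)$ and take arbitrary pairs $(A_1,A_2)$ and $(B_1,B_2)$ with $d(A_i,A_j)=d(B_i,B_j)$. The degenerate case $A_1=A_2$ (equivalently $B_1=B_2$ by the distance condition) reduces to single-point homogeneity of $(M,d)$, which is given. Otherwise, set $r := d(A_1,A_2) = d(B_1,B_2) > 0$. Using homogeneity (the case $m=1$), pick $g \in \Isom(M)$ with $g(A_1)=B_1$ and let $y := g(A_2)$. Since $g$ is an isometry, $d(B_1,y) = d(A_1,A_2) = r$, so both $y$ and $B_2$ lie in $S(B_1,r)$. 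The hypothesis applied at the point $B_1$ produces $h \in \I(B_1)$ with $h(y)=B_2$, and then $f := h \circ g$ satisfies $f(A_1)=h(B_1)=B_1$ and $f(A_2)=h(y)=B_2$, giving the desired isometry.

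There is essentially no obstacle here; the statement is a formal unpacking of the definitions, and the only conceptual step is the standard trick of first aligning the first points by a global isometry and then using the isotropy subgroup to align the second points along a common sphere. The second sentence of the proposition is just a rewording of the first and requires no separate argument.
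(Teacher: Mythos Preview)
Your proof is correct and is the standard argument. Note that the paper does not actually supply a proof of this proposition: it is quoted from \cite{BerNik22} and stated without proof here, so there is no paper-side argument to compare against. Your two-step approach (align the first points by homogeneity, then use the isotropy subgroup to align the second points on the common sphere) is precisely the natural unpacking of Definition~\ref{de:mpoint} for $m=2$, and the handling of the degenerate case $A_1=A_2$ is appropriate.
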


\begin{remark}
If $(M,d)$ is a homogeneous subset in Euclidean space $\mathbb{R}^n$, then every sphere $S(x,r)$ is in some $(n-1)$-dimensional Euclidean subspace,
since all points of this sphere are also at one and the same distance from the barycenter of $M \subset \mathbb{R}^n$
(recall that $\|x-a\|=C$ and $\|x-b\|=D$ for $x, a, b \in \mathbb{R}^n$
implies $2(x,a-b)=\|a\|^2-\|b\|^2-C^2+D^2$).
\end{remark}

If $(M,d)$ is the vertex set of a homogeneous polytope $P \subset \mathbb{R}^n$ with center $O$, then for every vertex $A$,
all points of the straight line $OA$ are fixed under any element of the isotropy group $\I(A)$.
\smallskip

\begin{prop}\label{prop.gen0}
Let $P$ be a $2$-point homogeneous polytope in $\mathbb{R}^n$ with the vertex set~$M$ and center $O$. Let us consider some points $A,B \in M$,
such that the midpoint $D$ of the segment $[A,B]$ is distinct from $O$. Let $L$ be a $2$-dimensional plane $L$ through the points $A$, $B$, and $O$,
and $l$ be a straight line through $O$ and  $D$. Then $P\cap L$ is symmetric in $L$ with respect to the straight line $l$.
Moreover, if a point $C\in l$ is in the interior of a facet $F$ of $P$,
then $C$ is the center of the segment $F \cap L$.
\end{prop}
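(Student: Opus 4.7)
The plan is to produce, from the $2$-point homogeneity hypothesis, a single isometry $f \in \Symm(P)$ that swaps $A$ and $B$, and then to analyze how $f$ acts on the plane $L$. First I would observe that the pairs $(A,B)$ and $(B,A)$ have identical pairwise distance data, so Definition~\ref{de:mpoint} applied with $m=2$ yields an $f \in \Symm(P)$ with $f(A)=B$ and $f(B)=A$. This $f$ fixes the barycenter $O$ (every symmetry does, by Proposition~\ref{pr.efhs}) and also fixes the midpoint $D$ of the segment $[A,B]$, since $f$ interchanges its endpoints. Being an orthogonal transformation of $\mathbb{R}^n$ fixing both $O$ and $D$, it fixes the straight line $l=OD$ pointwise.

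Next I would verify that $f$ preserves $L$ and restricts to the reflection of $L$ across $l$. Since $f$ permutes $\{A,B,O\}$, it sends the $2$-plane $L = \mathrm{aff}(A,B,O)$ onto itself. The restriction $f|_L$ is an isometry of $L$ which fixes the line $l$ pointwise but is not the identity (it swaps $A$ with $B$), so it must be the reflection of $L$ across $l$. From $f(P)=P$ and $f(L)=L$ it follows that $f(P\cap L)=P\cap L$, which is exactly the first claim.

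For the second assertion, fix a point $C\in l$ lying in the interior of a facet $F$ of $P$. Since $f(C)=C$ and $C$ belongs to the relative interior of exactly one facet, $f$ must map $F$ to itself. Let $H$ be the hyperplane carrying $F$. Because $O$ lies in the interior of $P$ while $H$ is a supporting hyperplane, $O\notin H$; in particular $l\not\subset H$, so $l_F:=H\cap L$ is a line in $L$ distinct from $l$. The set $F\cap L$ is then a closed segment inside $l_F$, containing $C$ in its relative interior. A short parallel-line argument using $f(l_F)=l_F$ rules out the case $l_F\parallel l$: the reflection across $l$ would send a parallel line distinct from $l$ to a different parallel line. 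Hence $l_F$ meets $l$ in a single point, which must be $C$ (it is fixed by $f$ and lies on both lines). The restriction $f|_{l_F}$ is therefore the isometry of $l_F$ whose unique fixed point is $C$, namely the reflection of $l_F$ about $C$. Applying this reflection to the $f$-invariant segment $F\cap L$ shows that $C$ is its midpoint.

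The main obstacle I foresee is not deep but requires care: one must justify that $F\cap L$ really is a nondegenerate closed segment on $l_F$ (using that $C$ is in the relative interior of $F$) and exclude the degenerate configurations $l_F=l$ and $l_F\parallel l$. Both are blocked by the fact that the barycenter $O$ lies strictly inside $P$ and therefore off the supporting hyperplane $H$ of the facet $F$.
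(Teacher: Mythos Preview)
Your proof is correct and follows essentially the same route as the paper's: obtain an isometry swapping $A$ and $B$ from $2$-point homogeneity, observe that it fixes $O$ and $D$ and hence the line $l$ pointwise, and conclude that its restriction to $L$ is the reflection across $l$. Your treatment of the second assertion is considerably more detailed than the paper's (which simply asserts that $F\cap L$ is a segment with midpoint $C$); note, however, that your ``parallel-line argument'' is unnecessary, since $C\in l\cap l_F$ already shows these two lines meet.
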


\begin{proof}
Since $P$ is $2$-point homogeneous, there is an isometry $\sigma$ of $P$, such that $\sigma(A)=B$ and $\sigma(B)=A$.
Obviously, $\sigma$ fixes all points on the straight line $l$ and the $2$-dimensional plane $L$ through the points $A$, $B$, and $O$.
Therefore, $P\cap L$ is invariant with respect to $\sigma$. Finally, the restriction of $\sigma$ to $L$ is the symmetry with respect to the straight line $l$.
This proves the first assertion. The second assertion follows from the fact, that $F \cap L$ is a line segment with the midpoint $C$.
\end{proof}

\begin{figure}[t]
\vspace{5mm}
\begin{center}
\begin{minipage}[h]{0.3\textwidth}
\center{\includegraphics[width=0.8\textwidth, trim=0in 0in 0mm 0mm, clip]{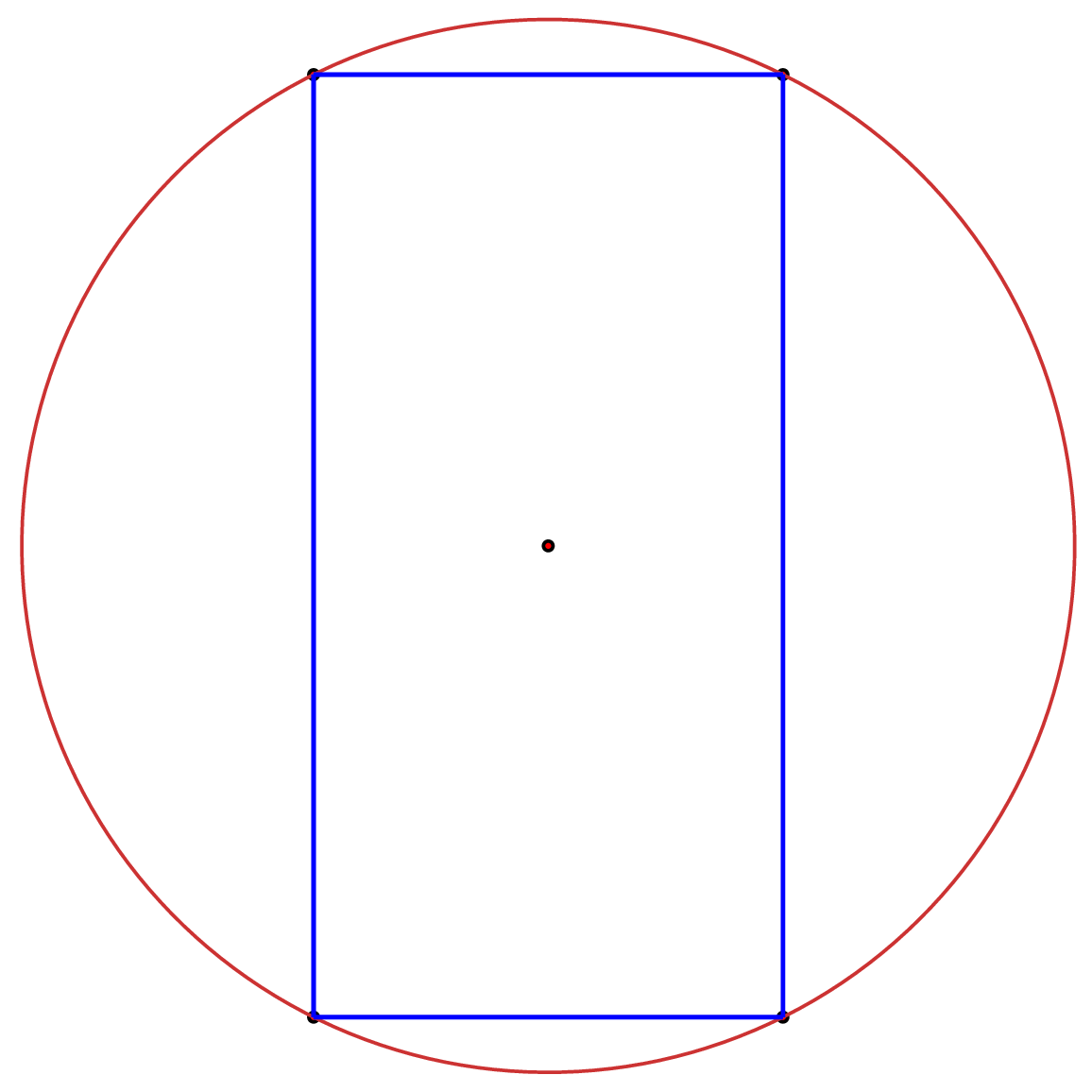} \\ a)}
\end{minipage}
\begin{minipage}[h]{0.3\textwidth}
\center{\includegraphics[width=0.8\textwidth, trim=0in 0in 0mm 0mm, clip]{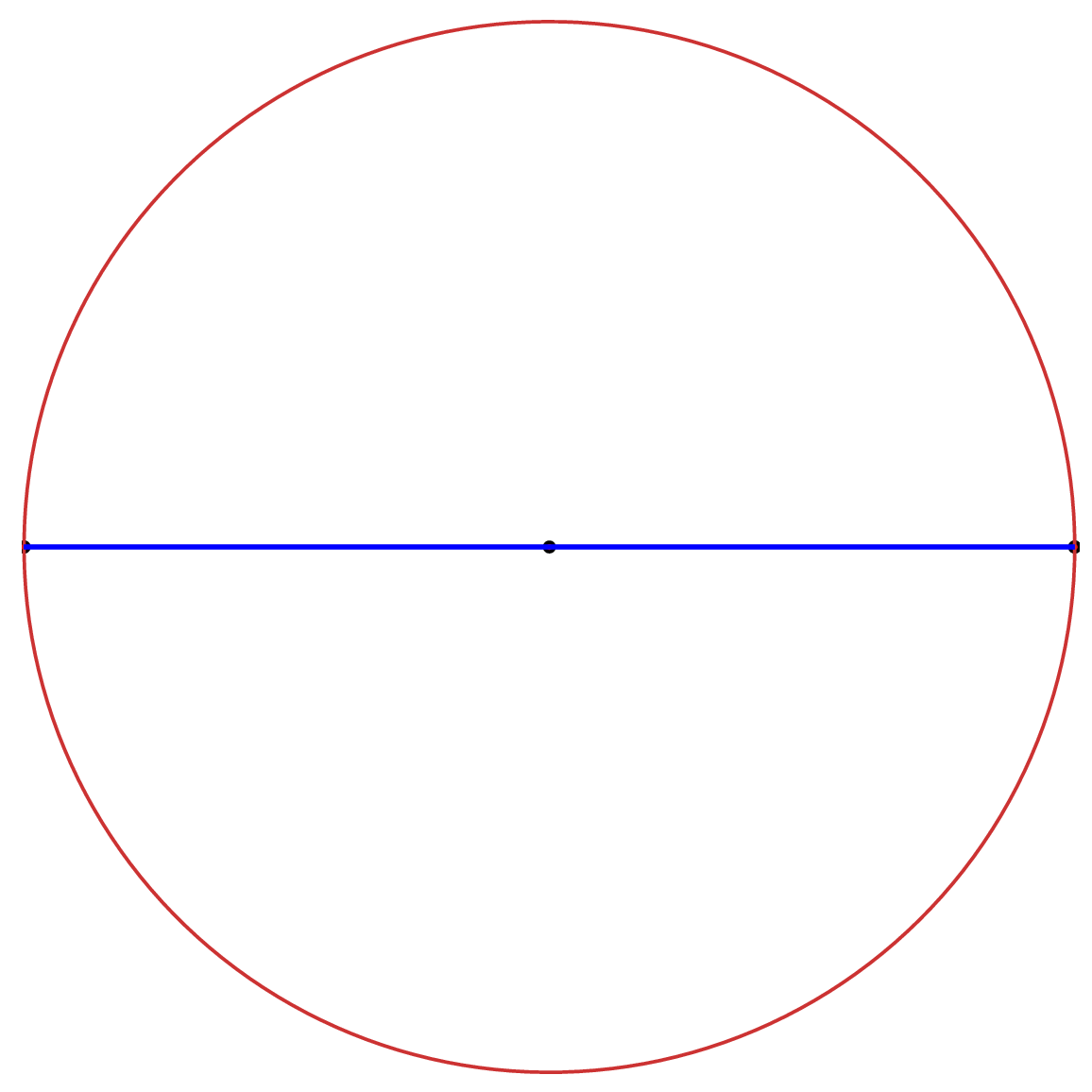} \\ b)}
\end{minipage}
\begin{minipage}[h]{0.3\textwidth}
\center{\includegraphics[width=0.8\textwidth, trim=0in 0in 0mm 0mm, clip]{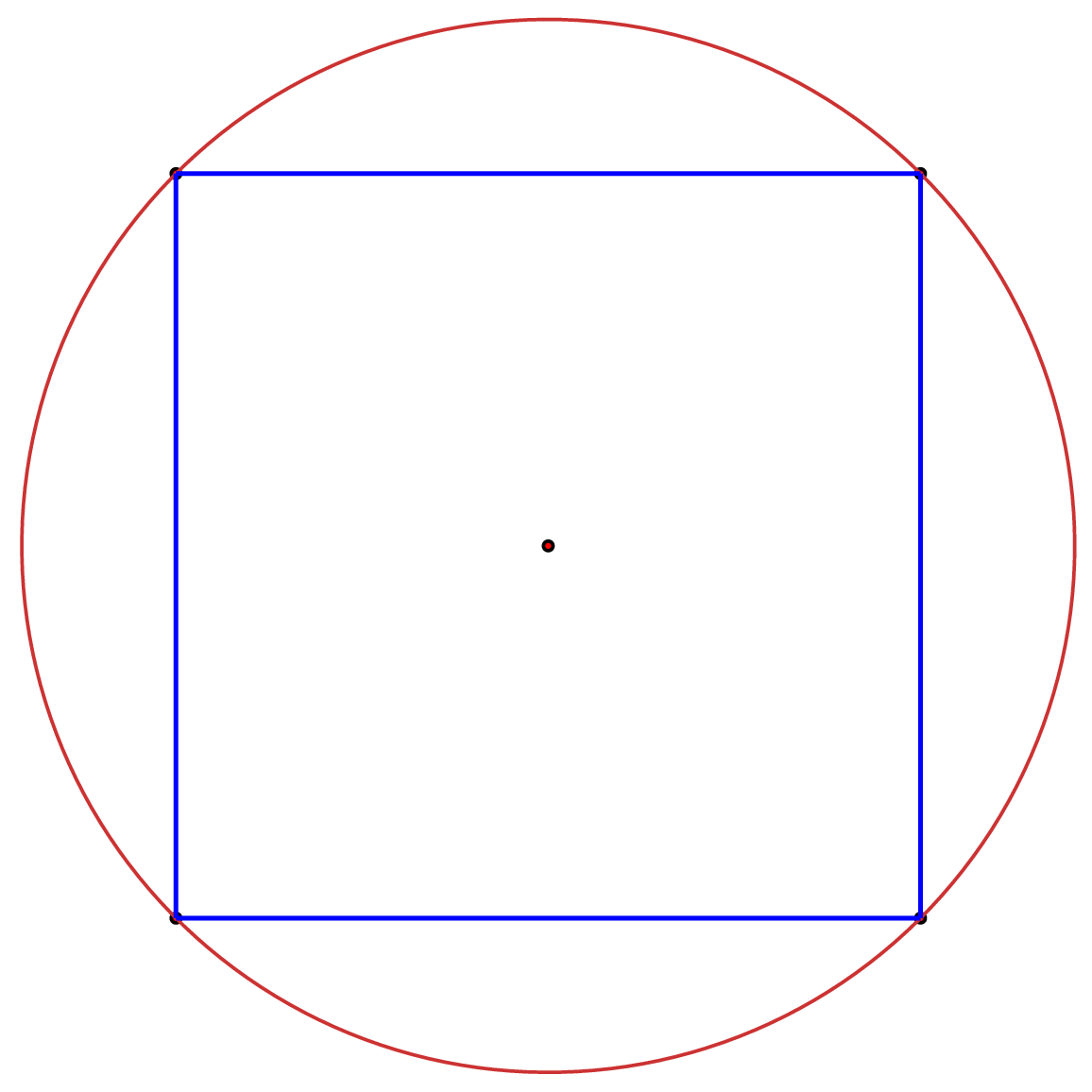} \\ c)}
\end{minipage}
\vfill
\begin{minipage}[h]{0.3\textwidth}
\center{\includegraphics[width=0.8\textwidth, trim=0in 0in 0mm 0mm, clip]{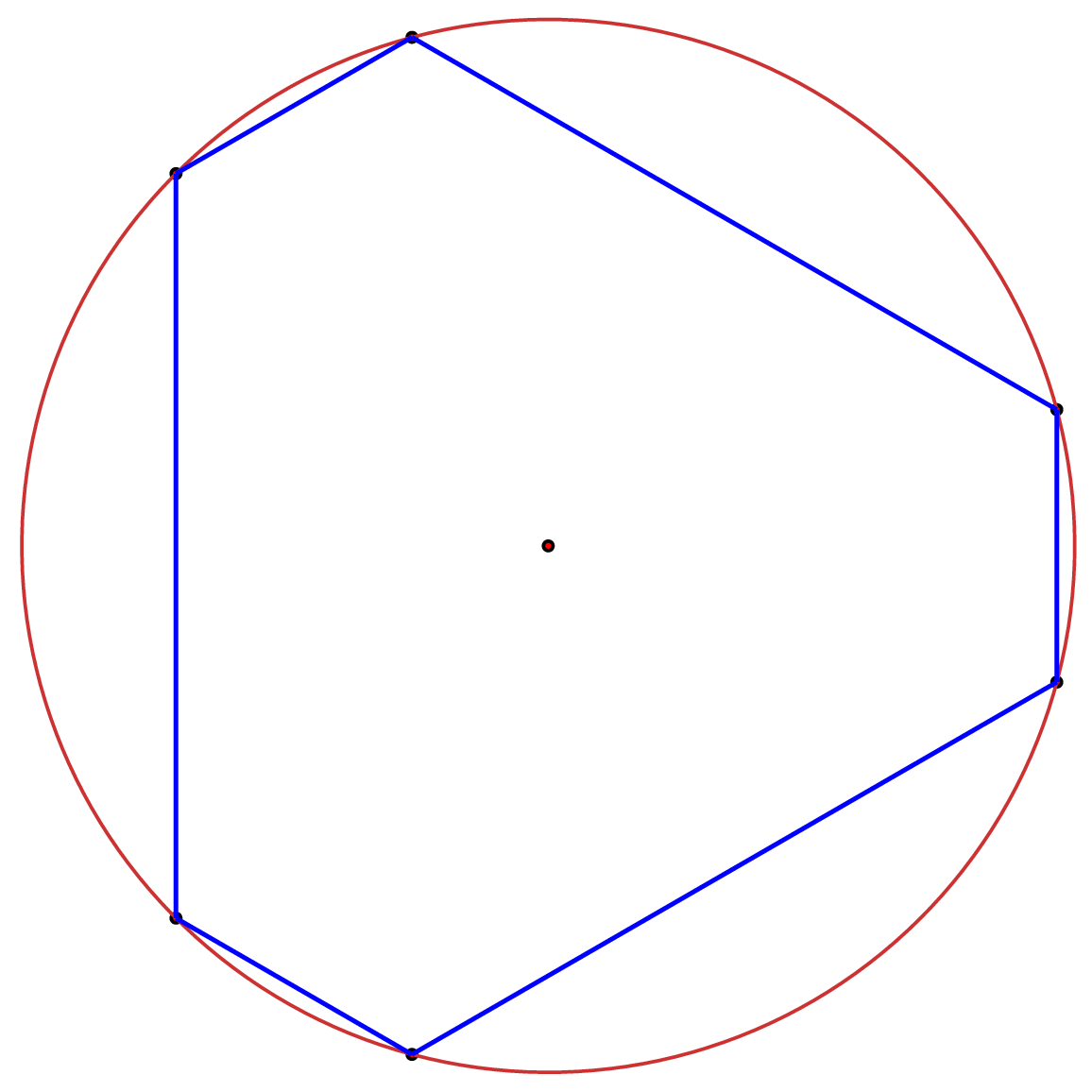} \\ d)}
\end{minipage}
\begin{minipage}[h]{0.3\textwidth}
\center{\includegraphics[width=0.8\textwidth, trim=0in 0in 0mm 0mm, clip]{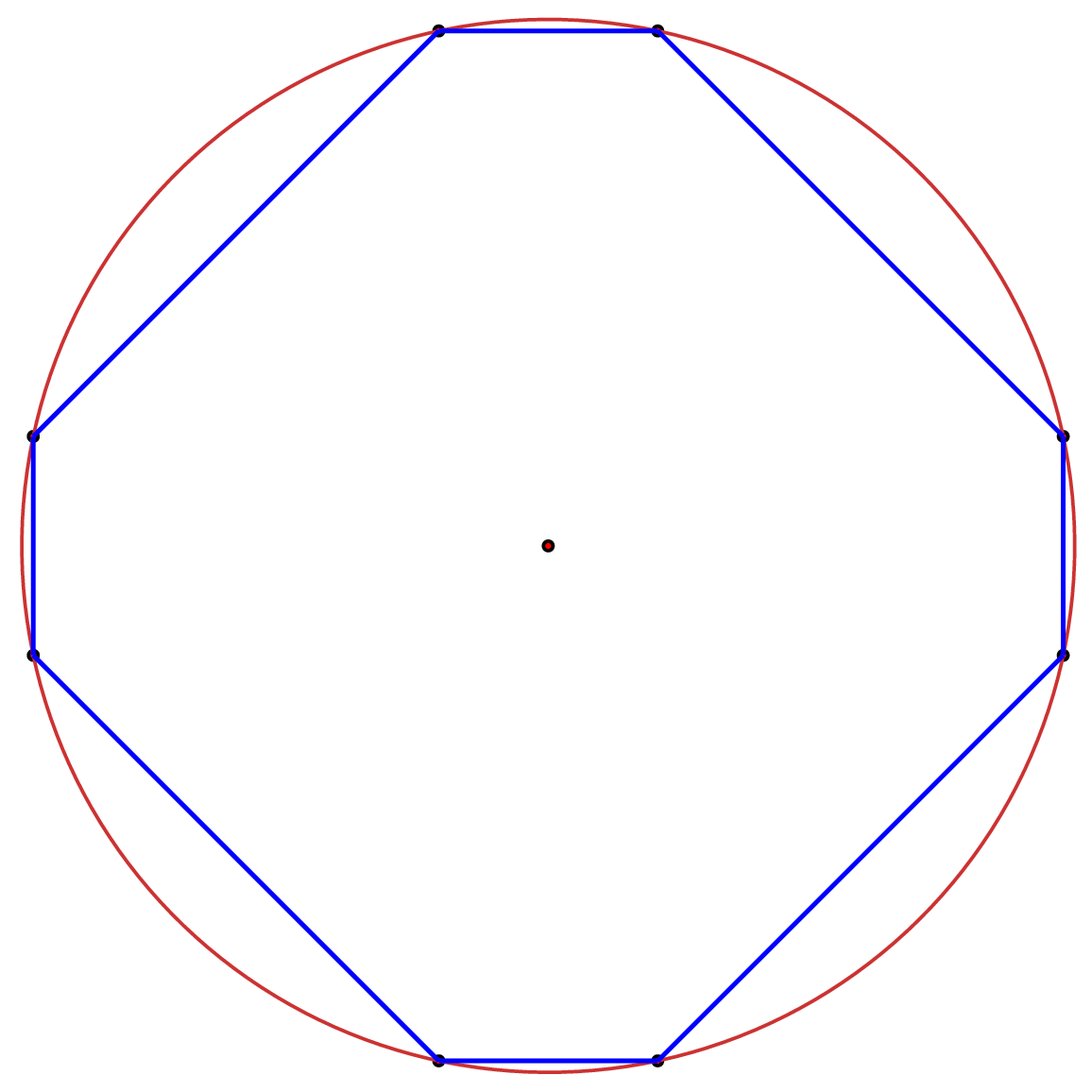} \\ e)}
\end{minipage}
\begin{minipage}[h]{0.3\textwidth}
\center{\includegraphics[width=0.8\textwidth, trim=0in 0in 0mm 0mm, clip]{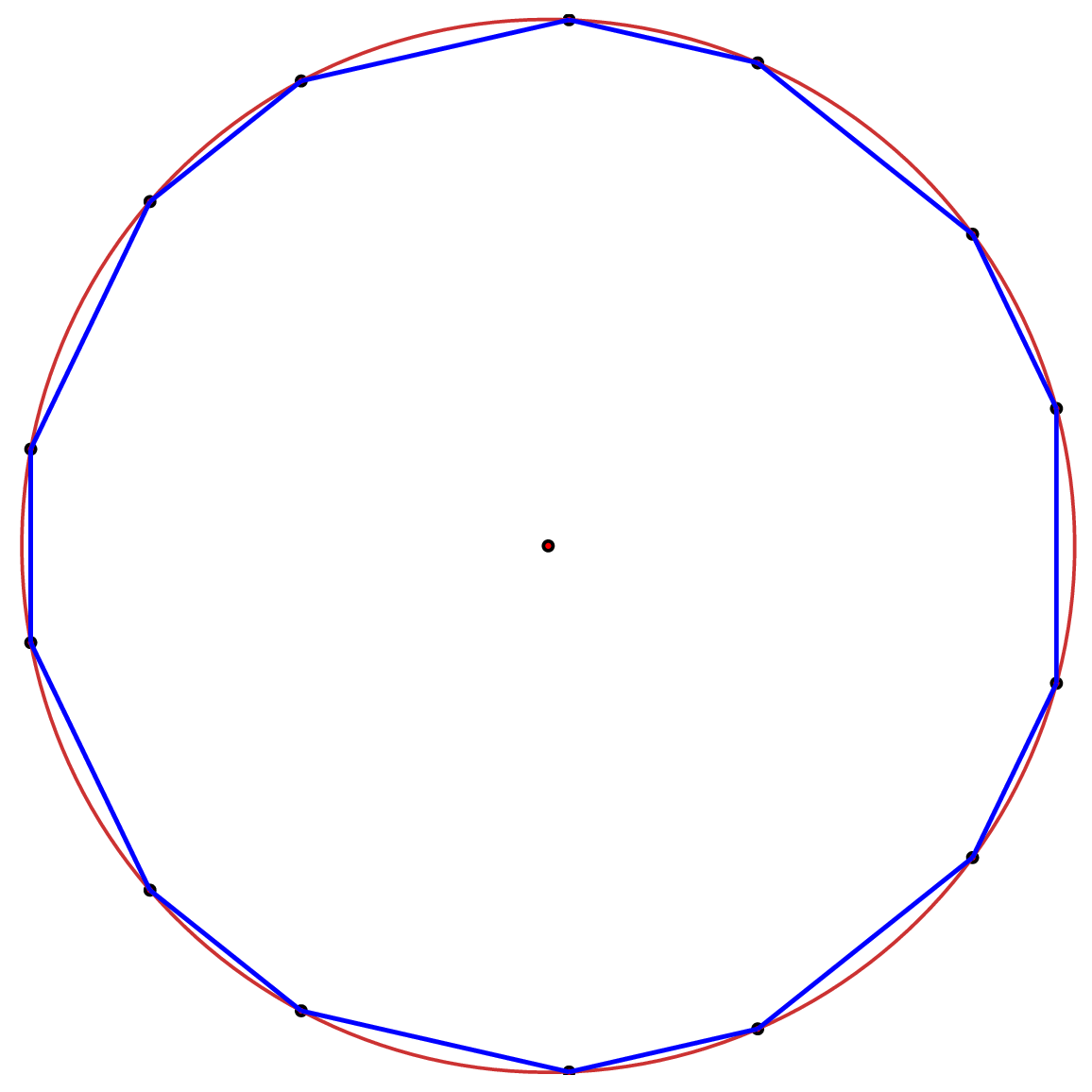} \\ f)}
\end{minipage}
\caption{{{\normalsize{Homogeneous polygons (orbits of $D_n$) for:}}
\normalsize{a) --- c) $n=2$;  d)  $n=3$; e)  $n=4$; f)  $n=7$.}}}
\label{Fig6}
\end{center}
\end{figure}

\section{Two-dimensional case}\label{sec.5}

Here we consider some properties of homogeneous and $2$-point homogeneous polygons.

There are only two families of finite subgroups of $O(2)$: cyclic groups $C_1,C_2,\dots, C_k, \dots$ and
dihedral groups $D_1, D_2, \dots, D_k, \dots$. The group $C_k$ is generated by the rotation by angle $2\pi/k$,
the group $D_k$ is generated by the same rotation and the reflection $(x_1,x_2) \mapsto (x_1,-x_2)$.
The groups $C_k$ and $D_k$ have the orders $k$ and $2k$ respectively.

\begin{lemma}\label{le.homepolygon.1}
Let $P$ be a non-degenerate homogeneous polygon in $\mathbb{R}^2$. Then either $P$ is a regular $n$-gon for some $n \geq 3$,
or $P$ is a convex hull of the union of two isometric regular $n$-gons  with a common center for some $n\geq 2$.
\end{lemma}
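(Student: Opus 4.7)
By Proposition \ref{pr.efhs}, the vertex set of $P$ lies on a circle $S$ centered at the barycenter $O$, and $\Symm(P)\subset O(2)$. Since $\Symm(P)$ is a finite subgroup of $O(2)$, it is either cyclic $C_k$ or dihedral $D_k$ for some $k\geq 1$. The first step is to rule out the purely rotational case: if $\Symm(P)=C_k$, then the $C_k$-orbit of any vertex is a regular $k$-gon, but the convex hull of a regular $k$-gon has the full dihedral symmetry group $D_k$, contradicting $\Symm(P)=C_k$. Hence one must have $\Symm(P)=D_k$ for some $k$.

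Next, I would analyze the $D_k$-orbit of a single vertex $A\in S$. Because the rotations in $D_k\setminus\{e\}$ have only $O$ as fixed point, and any reflection in $D_k$ fixes exactly two points of $S$ (those on its mirror axis), the stabilizer $\I(A)\subset D_k$ of a point $A\in S$ is either trivial or the $\mathbb{Z}_2$ generated by the unique reflection whose axis passes through~$A$. This splits the proof into two cases according to $|\I(A)|$.

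In the case $|\I(A)|=2$, the orbit has $|D_k|/2=k$ points, namely the images of $A$ under the rotation subgroup $C_k\subset D_k$, which is a regular $k$-gon. Non-degeneracy of $P$ forces $k\geq 3$. In the case $|\I(A)|=1$, the orbit has $2k$ points. Parametrizing $A$ by an angle $\theta\in(0,\pi/k)$ on $S$ (so that $A$ is not on any mirror axis of $D_k$), the orbit decomposes under the rotation subgroup $C_k$ into two $C_k$-orbits $\{A,R_{2\pi/k}A,\dots,R_{2\pi(k-1)/k}A\}$ and its image under the reflection generating $D_k$. Each is a regular $k$-gon inscribed in $S$, and the two are isometric (related by a reflection) and share the center~$O$. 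Non-degeneracy of $P=\conv(\text{orbit})$ in $\mathbb{R}^2$ forces $k\geq 2$ (for $k=1$ the orbit is a pair of points, giving a segment; the two $1$-gons would be single points, which is excluded).

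The main obstacle is the book-keeping around the degenerate small values of $k$: verifying that $k\geq 3$ is needed in the regular case and $k\geq 2$ in the two-$n$-gon case, and handling the coincidence when $\theta\in\{0,\pi/k\}$ (the point $A$ lies on a mirror axis, the two $k$-gons coincide, and we fall back into the regular case). Aside from this case analysis, the argument is essentially the classification of orbits of finite subgroups of $O(2)$ acting on a circle.
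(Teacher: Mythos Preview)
Your proposal is correct and follows essentially the same route as the paper: both rest on the classification of finite subgroups of $O(2)$ and the observation that an orbit on the circle is either a single regular $n$-gon (a $C_n$-orbit) or a pair of congruent regular $n$-gons with common center (a generic $D_n$-orbit). The paper's proof is terser---it does not bother to rule out $\Symm(P)=C_k$ by contradiction, since a $C_k$-orbit is already a regular $k$-gon and thus lands in the first alternative of the lemma; your extra step is correct but unnecessary.
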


\begin{proof}
A non-trivial orbit of the group $C_n$ is a vertex set of a regular $n$-gon. In this case we have a non-degenerate polygon only when $n \geq 3$.
The orbit of the group $D_n$ is a union of two orbits of $C_n \subset D_n$, that could be moved each to the other by some element of $D_n \setminus C_n$.
\end{proof}

\begin{remark}\label{rem.semreg.pol}
Every regular $n$-gon has the isometry group $D_n$.
A non-regular polygon, whose vertex set is the orbit of some dihedral group $D_n$, is equiangular
(all its angles are congruent), see Fig.~\ref{Fig6}. There is always a circle passing through all vertices of
such a polygon and any two sets of alternate sides have one and the same length.
In fact, there are two classes of vertices (each class contains $n$ alternate vertices)
such that the vertices in a fixed class could be moved each to other by a suitable isometry of the polygon.
\end{remark}

Now we give a more detailed description of non-regular homogeneous polygons.

\begin{example}\label{ex.2dim.1} Let us fix a natural number $n \geq 2$ and two real numbers $\alpha$ and $\beta$ such that $0<\alpha <\beta$ and $\alpha+\beta =2\pi/n$.
Now we consider the standard unit circle $S=\{(x,y)\in \mathbb{R}^2\,|\, x^2+y^2=1\}$ with the origin $O$ and $2n$ rays $OA_i$, $1\leq i \leq 2n$, where $A_i \in S$,
$OA_1$ is  the first coordinate ray, and every ray $OA_{i+1}$ is obtained from the ray $OA_i$ by rotating it by the angle $\alpha$ ($\beta$) counterclockwise
if $i$ is odd (respectively, even), assuming that $A_{2n+1}=A_1$. Denote the convex hull of the points $A_i$, $i=1,\dots,2n$ as $R_n(\alpha,\beta)$.
It is clear that the points $A_i$, $i=1,\dots,2n$, form an orbit of the group $D_n$. In particular, any $R_2(\alpha,\beta)$ is a rectangle inscribed in $S$.

For any side $[A_i,A_{i+1}]$ the symmetry with respect to the line through the center of
$[A_i,A_{i+1}]$ and $O$ is an isometry of $R_n(\alpha,\beta)$. Hence, for a given side,  we have an isometry that interchanges its endpoints.
Moreover the rotation group, generated by the rotation by the angle $\alpha+\beta$ counterclockwise, is the cyclic group $\mathbb{Z}_n$,
that acts transitively on each set of sides of $R_n(\alpha,\beta)$ with equal lengths. Therefore, for every two sides of $R_n(\alpha,\beta)$ with equal lengths,
there is an isometry of
$R_n(\alpha,\beta)$ that moves one side to another one (with both orientations). For $n=2$, it implies that any rectangle $R_2(\alpha,\beta)$
is $2$-point homogeneous (it is easy to check directly). On the other hand, if $n\geq 3$, then there is no isometry $\eta$ of
$R_n(\alpha,\beta)$  such that $\eta(A_2)=A_2$ and $\eta (A_0)=A_4\neq A_0$. Indeed, for any such $\eta$ we get that $\eta (A_1)=A_3$.
On the other hand $d(A_2,A_1)\neq d(A_2,A_3)=d(\eta (A_2),\eta (A_1))$ since $\alpha <\beta$. Therefore, $R_n(\alpha,\beta)$ is not $2$-point homogeneous
for all $n\geq 3$.
\end{example}

\begin{theorem}\label{th.2dim.1}
If $P$ is a homogeneous polygon in $\mathbb{R}^2$, then $P$ is either regular, or it is similar to some polygon $R_n(\alpha,\beta)$ as in Example \ref{ex.2dim.1}
for suitable $\alpha$, $\beta$, and $n\geq 2$. As a corollary, $P$ is $m$-point homogeneous for any $m\in\mathbb{N}$ if $P$ is a regular polygon
or a rectangle {\rm(}similar to $R_2(\alpha,\beta)$ for some $0<\alpha <\beta${\rm)}. If $P$ is not a regular polygon or a rectangle, then $P$ is not $2$-point homogeneous.
\end{theorem}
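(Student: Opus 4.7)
The statement combines a classification with two homogeneity assertions (a positive one and a negative one), which I would treat in turn. The classification follows directly from Lemma \ref{le.homepolygon.1}: the only finite subgroups of $O(2)$ are the cyclic groups $C_n$ and the dihedral groups $D_n$, so the vertex set of a non-degenerate homogeneous polygon is an orbit of one of them. A $C_n$-orbit ($n\geq 3$) is the vertex set of a regular $n$-gon. A $D_n$-orbit, after applying a similarity of $\mathbb{R}^2$ sending one chosen vertex to $(1,0)$, is precisely the vertex set of $R_n(\alpha,\beta)$ for suitable $0<\alpha\leq\beta$ with $\alpha+\beta=2\pi/n$; the boundary case $\alpha=\beta$ gives a regular $2n$-gon, and otherwise we obtain the stated non-regular polygon.

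The negative direction is already established in Example \ref{ex.2dim.1}: for $n\geq 3$ and $\alpha<\beta$, the polygon $R_n(\alpha,\beta)$ fails to be $2$-point homogeneous via Proposition \ref{pr:two-point homogeneous}. So the substantive remaining work is to prove that rectangles and regular polygons are $m$-point homogeneous for every $m$. For a rectangle, $2$-point homogeneity comes from Example \ref{ex.2dim.1}; to upgrade to $m$-point homogeneity I would note that from any fixed vertex the three other vertices are at pairwise distinct distances, so pinning a single vertex determines the remaining three by their distances, and any $m$-tuple involves at most four distinct points.

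For a regular $n$-gon I would proceed by induction on $m$. Given two $m$-tuples $(A_1,\dots,A_m)$ and $(B_1,\dots,B_m)$ with matching pairwise distances, use vertex-transitivity to reduce to $A_1=B_1$, and then use the reflection that generates $\I(A_1)$ (the diameter through $A_1$) to reduce to $A_2=B_2$ as well. The pointwise stabilizer $\I(A_1)\cap\I(A_2)$ is trivial if $A_1,A_2$ are non-antipodal, in which case each $B_i$ is uniquely forced to equal $A_i$ by its two distances; otherwise the stabilizer equals $\{\Id,\sigma\}$, where $\sigma$ is the reflection across the line $A_1A_2$. In the antipodal case the key step is to show the dichotomy: either $B_i=A_i$ for all $i\geq 3$, or $B_i=\sigma(A_i)$ for all $i\geq 3$. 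A mixed situation would force some $A_i$ to lie on the perpendicular bisector of $\{A_j,\sigma(A_j)\}$, which is the axis of $\sigma$, but that axis contains only the vertices $A_1$ and $A_2$. This antipodal subcase is the main obstacle; once it is ruled out the required isometry is either the identity or $\sigma$, completing the induction.
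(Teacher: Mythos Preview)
Your proposal is correct, and both the classification and the negative assertion are handled essentially as the paper does (the paper argues directly that side lengths alternate rather than going through Lemma~\ref{le.homepolygon.1}, but this is cosmetic). The genuine difference is in the positive assertion. The paper does not prove $m$-point homogeneity of regular polygons and rectangles by hand; instead it observes that these polygons are $2$-point homogeneous (for regular polygons this is immediate from Proposition~\ref{pr:two-point homogeneous}, since each sphere $S(A,r)$ has at most two points interchanged by the reflection in $\I(A)$; for rectangles it is the remark in Example~\ref{ex.2dim.1}) and then invokes Corollary~3.2 of \cite{BerNik22} (quoted here as Corollary~\ref{co:if-homog}): in $\mathbb{R}^2$, $2$-point homogeneity already implies $m$-point homogeneity for all $m$. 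Your direct argument---reducing to $A_1=B_1$, $A_2=B_2$, and then showing that the remaining points are forced (with the antipodal dichotomy handled via the perpendicular-bisector observation)---is a valid and fully self-contained alternative; it is in effect a proof of that corollary in dimension $2$. The paper's route is shorter and highlights that the $2$-dimensional case is an instance of a general phenomenon, whereas your route avoids the external reference and makes the mechanism transparent. One small remark: your ``induction on $m$'' framing is not actually used---your argument works directly for every $m$ once $A_1,A_2$ are pinned.
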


\begin{proof}
Since $P$ is homogeneous, it is inscribed in a circle. Without loss of generality, we may suppose that this circle is the standard unit circle $S$.
If for any vertex of $P$, two sides of $P$, adjacent to it, have the same lengths, then $P$ is a regular polygon.

Let us suppose that for some vertex, the lengths of two sides, adjacent to it, are different, say $\alpha$ and $\beta$, $0<a <b$.
Since $P$ is homogeneous, then the same is true about any vertex of $P$.
Consequently, the lengths of the sides of the polygon are equal to the numbers $a$ and $b$ alternately,
i.~e. the adjacent sides for a given side have lengths different from the length of this side.
Now, it is clear that $P$ is isometric to some polygon $R_n(\alpha,\beta)$, where $a=2\sin(\alpha/2)$ and $b=2\sin(\beta/2)$,
as in Example \ref{ex.2dim.1} for some natural $n$.

As it was explained above, all regular polygons, as well as, all rectangles $R_2(\alpha,\beta)$ are $2$-point homogeneous and even $m$-point homogeneous
for any $m\in\mathbb{N}$ by Corollary~\ref{co:if-homog}.
On the other hand, all polygons $R_n(\alpha,\beta)$ are not $2$-point homogeneous for any $n\geq 3$, see Example \ref{ex.2dim.1}.
\end{proof}

\smallskip

\section{On right prisms and antiprisms}\label{sec.6}

Let us recall some definitions related to right prisms and antiprisms in $\mathbb{R}^3$.

A {\it right prism} is a polyhedron whose two faces (called bases) are congruent (equal) regular polygons,
lying in parallel planes, while the other faces (called lateral ones) are rectangles (perpendicular to
the bases).
A {\it homogeneous prism} is a right prism with homogeneous bases (see Lemma \ref{le.homepolygon.1} and Theorem \ref{th.2dim.1} that describe all homogeneous polygons).
A {\it regular prism} is a right prism with regular bases.
If in addition lateral faces are squares then the prism under consideration is also a semiregular convex polyhedron.

A {\it right antiprism} is a polyhedron, whose two parallel faces (bases) are equal regular $n$-gons twisted by the angle $\pi/n$ relative to each other,
while the other $2n$ (lateral) faces are isosceles triangles. It is also called {\it a right $n$-gonal antiprism}.
If lateral faces are equilateral triangles then the antiprism under consideration is called {\it uniform}  and is a semiregular convex polyhedron.

Now we consider some general construction related to prisms over homogeneous polytopes.

\begin{prop}\label{pr.constr.1}
Let  $P$ be a homogeneous polytope in $\mathbb{R}^n$ and $c >0$, then $\widetilde{P}=P\times [0,c]$ is a homogeneous polytope in $\mathbb{R}^{n+1}$.
\end{prop}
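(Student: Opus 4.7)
\textbf{Proof plan for Proposition \ref{pr.constr.1}.} The plan is to describe $\Symm(\widetilde{P})$ explicitly enough to exhibit transitivity on the vertex set, by lifting symmetries of $P$ and combining them with a reflection in the last coordinate. First I would identify the vertex set of $\widetilde{P}=P\times[0,c]$: since the vertex set of a product of polytopes is the product of the vertex sets, it is
\[
V(\widetilde{P})=\bigl(V(P)\times\{0\}\bigr)\cup\bigl(V(P)\times\{c\}\bigr).
\]
I will write a typical vertex as $(v,0)$ or $(v,c)$ with $v\in V(P)$.

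Next I would introduce two families of symmetries. For every $\varphi\in\Symm(P)$, define $\widetilde{\varphi}\colon\mathbb{R}^{n+1}\to\mathbb{R}^{n+1}$ by $\widetilde{\varphi}(x,t)=(\varphi(x),t)$. Since $\varphi$ is an isometry of $\mathbb{R}^n$ preserving $P$ and the identity preserves $[0,c]$, the map $\widetilde{\varphi}$ is an isometry of $\mathbb{R}^{n+1}$ preserving $\widetilde{P}$, hence lies in $\Symm(\widetilde{P})$. Also, the affine reflection $\sigma(x,t)=(x,c-t)$ is an isometry of $\mathbb{R}^{n+1}$ that sends $\widetilde{P}$ to itself and interchanges the two slabs $\mathbb{R}^n\times\{0\}$ and $\mathbb{R}^n\times\{c\}$, so $\sigma\in\Symm(\widetilde{P})$.

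Finally I would assemble these into transitivity on $V(\widetilde{P})$. Given any two vertices $(v_1,t_1),(v_2,t_2)\in V(\widetilde{P})$ with $t_i\in\{0,c\}$, by homogeneity of $P$ choose $\varphi\in\Symm(P)$ with $\varphi(v_1)=v_2$; then $\widetilde{\varphi}$ maps $(v_1,t_1)$ to $(v_2,t_1)$. If $t_1=t_2$ we are done, and if $t_1\neq t_2$ we compose further with $\sigma$, sending $(v_2,t_1)$ to $(v_2,c-t_1)=(v_2,t_2)$. Thus $\Symm(\widetilde{P})$ acts transitively on $V(\widetilde{P})$, so $\widetilde{P}$ is homogeneous.

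There is no real obstacle here: the argument is purely a product-construction, and the only thing one has to check is that the lift $\varphi\mapsto\widetilde{\varphi}$ and the reflection $\sigma$ are genuine Euclidean isometries preserving the product polytope, which is immediate. The one mild conceptual point worth keeping in mind is that $\widetilde{P}$ sits in $\mathbb{R}^{n+1}$ even if $P$ is degenerate in $\mathbb{R}^n$, so the proof does not need any non-degeneracy hypothesis on $P$.
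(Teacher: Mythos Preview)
Your proposal is correct and follows essentially the same approach as the paper: both identify the vertex set of $\widetilde{P}$ as $V(P)\times\{0,c\}$, introduce the lifted isometries $\widetilde{\varphi}(x,t)=(\varphi(x),t)$ and the reflection $\sigma(x,t)=(x,c-t)$, and combine them to obtain transitivity on vertices. Your write-up is slightly more explicit in spelling out the two-step transitivity argument, but the content is the same.
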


\begin{proof}
Without loss of generality we may assume that any vertex of $\widetilde{P}$ has coordinates $(a_1,a_2,\dots, a_n, a_{n+1})$, where
$(a_1,a_2,\dots, a_n)$ is a vertex of $P$ and $a_{n+1} \in \{0,c\}$. Hence the vertices of $\widetilde{P}$, whose last coordinate is $0$ (or $c$),
determines a polytope isometric to $P$. We call it the {\it bottom} base (respectively, the {\it top} base).
We see that the isometry $\bigl(x_1,x_2,\dots, x_n, x_{n+1}\bigr) \mapsto \bigl(x_1,x_2,\dots, x_n, c-x_{n+1}\bigr)$ interchanges the bottom and the top bases.
On the other hand, every isometry $\tau$ of $P$ generates an isometry $\bigl(x_1,x_2,\dots, x_n, x_{n+1}\bigr) \mapsto \bigl(\tau(x_1,x_2,\dots, x_n), x_{n+1}\bigr)$ of
$\widetilde{P}$. Now, it is easy to see that the isometry group of $\widetilde{P}$ acts transitively on the vertex set of $\widetilde{P}$.
\end{proof}

\medskip
Now we are going to introduce some natural generalization of (right) homogeneous prisms.

\begin{definition}\label{de.gen.prism.1}
A homogeneous polytope $P \subset \mathbb{R}^n$ is called a {\it generalized homogeneous prism} if it has two faces $F_1$ and $F_2$ with
$F_1\bigcap F_2=\emptyset$, any vertex of $P$ is a vertex either of $F_1$ or of $F_2$, and there exists an isometry $\eta$ of $P$
such that $\eta(F_1)=F_2$ and $\eta(F_2)=F_1$.
The faces $F_1$ and $F_2$ are called the bases of $P$.
\end{definition}

It is easy to see that all right prisms over homogeneous bases and all right antiprisms are generalized homogeneous prisms.
Note that any right antiprism over a segment in $\mathbb{R}^3$ (in particular, a regular tetrahedron) has two one-dimensional bases.

\begin{definition}\label{de.gen.prism.2}
A generalized homogeneous prism $P \subset \mathbb{R}^n$ is called  {\it  rigidly two-layered} if
for every triple of vertices
$A,B,C \in {P}$, the equality $d(A,B)=d(A,C)$ implies that $B$ and $C$ are situated in one and the same base of ${P}$
(in other words, the set of distances between vertices of $P$ from a fixed base has empty intersection with the set
of distances between vertices of $P$ from distinct bases).
\end{definition}

\begin{definition}\label{de.gen.prism.3}
A generalized homogeneous prism $P \subset \mathbb{R}^n$ is called  {\it strongly prismatic} if
for any isometry $\eta$ of $P$, either $\eta$ preserves both bases of $P$, or $\eta$ interchanges two bases of $P$.
\end{definition}

\begin{prop}\label{pr.prism.3.0}
Let us suppose that a generalized homogeneous prism $P \subset \mathbb{R}^n$ is strongly prismatic and $2$-point homogeneous.
Then it is rigidly two-layered.
\end{prop}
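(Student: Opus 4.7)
The plan is to argue by contradiction, combining the $2$-point homogeneity with the ``strong'' property to force any two vertices equidistant from a common vertex into the same base.

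Suppose $A, B, C$ are vertices of $P$ with $d(A,B) = d(A,C)$, but, contrary to the claim, $B$ and $C$ lie in distinct bases; say $B \in F_1$ and $C \in F_2$. Recall from Definition \ref{de.gen.prism.1} that every vertex of $P$ belongs to $F_1 \cup F_2$, and $F_1 \cap F_2 = \emptyset$. Since $B$ and $C$ lie on the same sphere $S(A, r)$ with $r = d(A,B)$ centered at $A$, Proposition \ref{pr:two-point homogeneous} (the isotropy characterization of $2$-point homogeneity) yields an isometry $\psi \in \I(A)$ of $P$ such that $\psi(B) = C$.

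Now I invoke the strong hypothesis (Definition \ref{de.gen.prism.3}): the isometry $\psi$ either preserves each of $F_1$ and $F_2$, or interchanges them. In the first case, $\psi(F_1) = F_1$, so $\psi(B) \in F_1$; but $\psi(B) = C \in F_2$ and $F_1 \cap F_2 = \emptyset$, a contradiction. In the second case, $\psi(F_1) = F_2$ and $\psi(F_2) = F_1$; since $A$ is a vertex of $P$, it lies in some $F_i$ ($i \in \{1,2\}$), whence $\psi(A)$ lies in the other base $F_{3-i}$. But $\psi(A) = A$ forces $A \in F_1 \cap F_2 = \emptyset$, again a contradiction.

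Both cases being impossible, we conclude $B$ and $C$ must lie in the same base of $P$, establishing rigid two-layeredness. The argument is essentially a bookkeeping exercise once the right tools are identified; the only point requiring care is to notice that $2$-point homogeneity must be applied through the isotropy-group form (Proposition \ref{pr:two-point homogeneous}) so that the isometry $\psi$ fixes the common vertex $A$, which is what allows the strong condition to produce a contradiction in the base-swapping case. I do not anticipate any substantive obstacle beyond this.
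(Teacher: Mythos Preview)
Your proof is correct and follows essentially the same approach as the paper's own proof: assume a failure of rigid two-layeredness, use $2$-point homogeneity to produce an isometry fixing $A$ and sending $B$ to $C$, then derive a contradiction from the strong condition. The paper's version is slightly terser (it invokes Definition~\ref{de:mpoint} directly rather than Proposition~\ref{pr:two-point homogeneous}, and does not spell out the two cases separately), but the logic is identical.
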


\begin{proof}
Since $P$ is  strongly prismatic, then for any isometry
$\eta$ of $P$, either $\eta$ preserves both bases of $P$, or $\eta$ interchanges two bases of $P$.
Suppose that $P$ is not rigidly two-layered. Then there are three vertices $A,B,C$ of
$P$ such that $B$ and $C$ are situated in distinct bases of ${P}$ and $d(A,B)=d(A,C)$.
Since $P$ is $2$-point homogeneous, there is an isometry $\theta$ of $P$, such that $\theta(A)=A$ and $\theta(B)=C$. Since $B$ and $C$ are in distinct bases of $P$, then
$\eta$ interchanged these bases. But $\theta(A)=A$ means that $\theta$ preserves both bases. This contradiction proves the proposition.
\end{proof}
\smallskip

\begin{lemma}\label{le.constr.2}
For any homogeneous polytope $P$, the property of the prism  $\widetilde{P}=P\times [0,c]$
to be rigidly two-layered is equivalent to the following one: $c^2 \not\in D(P)$,
where $D(P)$ is the set of all numbers of the type $|s^2-t^2|$, where
$t$ and $s$ are distances between some pairs of vertices of $P$.
\end{lemma}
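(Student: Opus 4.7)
The plan is to identify the vertex set of $\widetilde{P}=P\times[0,c]$ with pairs $(v,h)$, where $v$ is a vertex of $P$ and $h\in\{0,c\}$, and to record the Pythagorean decomposition
\[
d\bigl((v_1,h_1),(v_2,h_2)\bigr)^2 = d_P(v_1,v_2)^2 + (h_1-h_2)^2,
\]
where $d_P$ denotes the Euclidean distance in the ambient space of $P$. Consequently, distances between vertices in the same base are exactly the pairwise distances in $P$, while distances between vertices in different bases have the form $\sqrt{d_P(v_1,v_2)^2+c^2}$. The lemma thus reduces to a direct comparison of the squared-distance sets appearing on the two sides of the prism.

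For the implication ``$\widetilde{P}$ rigidly two-layered $\Rightarrow$ $c^2\notin D(P)$'' I would argue contrapositively. Suppose there exist vertices $A=(u,h_A)$, $B=(v,h_B)$, $C=(w,h_C)$ of $\widetilde{P}$ with $d(A,B)=d(A,C)$ and $h_B\neq h_C$. Since $h_A\in\{0,c\}=\{h_B,h_C\}$, the vertex $A$ shares its base with exactly one of $B,C$; up to relabeling assume $h_A=h_B$. The formula above then turns $d(A,B)=d(A,C)$ into $d_P(u,v)^2=d_P(u,w)^2+c^2$, so
\[
c^2=d_P(u,v)^2-d_P(u,w)^2=|s^2-t^2|\in D(P)
\]
with $t=d_P(u,v)$ and $s=d_P(u,w)$; the degenerate possibility $u=w$ (geometrically, $C$ lying directly above or below $A$) gives $s=0$, which is admitted as the distance of a vertex to itself.

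For the converse, suppose $c^2\in D(P)$, so there exist distances $s,t$ realized between pairs of vertices of $P$ with $c^2=|s^2-t^2|$; after swapping we may assume $t\geq s\geq 0$ and $c^2=t^2-s^2$. The key step is to realize both distances at a single vertex of $P$ by invoking the homogeneity of $P$: pick any pair $(p_1,p_2)$ with $d_P(p_1,p_2)=t$ and any pair $(q_1,q_2)$ with $d_P(q_1,q_2)=s$, then choose $\psi\in\Symm(P)$ with $\psi(q_1)=p_1$ and set $q:=\psi(q_2)$, so that $d_P(p_1,q)=s$. Setting $A=(p_1,0)$, $B=(p_2,0)$, $C=(q,c)$ gives three vertices of $\widetilde{P}$ with $B,C$ in distinct bases and
\[
d(A,B)^2=t^2=s^2+c^2=d(A,C)^2,
\]
which witnesses that $\widetilde{P}$ fails to be rigidly two-layered. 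The only genuinely nontrivial ingredient in the argument is this use of the transitive vertex action of $\Symm(P)$ to replace two a priori unrelated pairs of vertices of $P$ by pairs sharing a common vertex $p_1$; every other step is routine bookkeeping with the prism metric.
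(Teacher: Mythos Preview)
Your proof is correct and follows essentially the same route as the paper's: both directions rest on the Pythagorean decomposition of prism distances together with the vertex-transitivity of $\Symm(P)$ to anchor two realized base distances at a common vertex of $P$ (the paper even phrases the first direction slightly less economically, invoking homogeneity of $\widetilde{P}$ where your relabeling of $B$ and $C$ suffices). One cosmetic slip: what you call the contrapositive of ``rigidly two-layered $\Rightarrow c^2\notin D(P)$'' is in fact the contrapositive of the \emph{other} implication, but since your ``converse'' paragraph then supplies the remaining direction, the equivalence is fully established.
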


\begin{proof}
Let us suppose that $\widetilde{P}$ is not rigidly two-layered.
Then there are $A,B,C \in \widetilde{P}$ such that $d(A,B)=d(A,C)$, $B \in P$ and $C \not\in P$. Since $\widetilde{P}$ is homogeneous, we may suppose that $A\in P$.
Let $C'$ be a vertex of ${P}$ such that $[C',C]$ is orthogonal to~$P$. If $t=d(A,B)$ and $s=d(A,C')$, then
$c^2=d^2(C',C)=d^2(A,C)-d^2(A,C')=d^2(A,B)-d^2(A,C')=t^2-s^2$, hence, $c^2 \in D(P)$.

Now, if $c^2 \in D(P)$, then for some $A,B,C,D \in P$ we have $c^2=t^2-s^2$, where $d(A,B)=t$ and $d(C,D)=s$. Since $P$ is homogeneous, then we may assume that
$C=A$. Let $D''$ be a vertex of $\widetilde{P}$ such that $[D'',D]$ is orthogonal to $P$ and $D''$ is not in the base $P$. It is clear that
$d^2(A,D'')=d^2(A,D)+d^2(D,D'')=s^2+c^2=t^2=d(A,B)$. Since $B\in P$ and $D'' \not\in P$, then the prism is not rigidly two-layered.
\end{proof}
\smallskip

The following result gives us examples of $m$-point homogeneous polytopes in every dimension.

\begin{theorem}\label{th.constr.2}
Let $P$ be an $m$-point homogeneous polytope in $\mathbb{R}^n$, $m \geq 1$, and let the prism $\widetilde{P}=P\times [0,c]$
be rigidly two-layered. Then  $\widetilde{P}$ is an $m$-point homogeneous polytope in $\mathbb{R}^{n+1}$.
\end{theorem}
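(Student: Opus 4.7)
The plan is to reduce the $m$-point homogeneity of $\widetilde{P}=P\times[0,c]$ to that of $P$ itself. The construction supplies two families of isometries of $\widetilde{P}$: the flip $\eta\colon(x,t)\mapsto(x,c-t)$ interchanging the two bases $P_0=P\times\{0\}$ and $P_1=P\times\{c\}$, and, for each $\phi\in\Isom(P)$, the product isometry $\widetilde{\phi}\colon(x,t)\mapsto(\phi(x),t)$ preserving each base. I will show these suffice to match any two $m$-tuples with equal pairwise distances.

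Fix $m$-tuples $(A_1,\dots,A_m)$ and $(B_1,\dots,B_m)$ in the vertex set of $\widetilde{P}$ with $d(A_i,A_j)=d(B_i,B_j)$. The argument proceeds in four steps. First, the rigidly two-layered hypothesis, in its equivalent form from Definition~\ref{de.gen.prism.2}, says that the set of distances realized between vertices lying in a common base is disjoint from the set realized between vertices lying in different bases. Hence, for every pair $(i,j)$, whether $A_i$ and $A_j$ lie in the same base is determined by the value $d(A_i,A_j)$; the identity $d(A_i,A_j)=d(B_i,B_j)$ then forces the ``same base'' equivalence relation on $\{1,\dots,m\}$ induced by the $A$-tuple to coincide with that induced by the $B$-tuple. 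Second, if $A_1$ and $B_1$ lie in different bases, I replace $(B_i)$ by $(\eta(B_i))$; this leaves pairwise distances unchanged and, combined with the preceding observation, arranges that $A_i$ and $B_i$ share a base for every $i$. Third, write $A_i=(A_i',a_i)$ and $B_i=(B_i',b_i)$ with $A_i',B_i'\in P$ and $a_i,b_i\in\{0,c\}$; by construction $a_i=b_i$, so the Pythagorean identity $d^2(A_i,A_j)=d_P^2(A_i',A_j')+(a_i-a_j)^2$, together with the corresponding identity for the $B$'s, yields $d_P(A_i',A_j')=d_P(B_i',B_j')$ for all $i,j$. Fourth, the $m$-point homogeneity of $P$ furnishes $\phi\in\Isom(P)$ with $\phi(A_i')=B_i'$ for all $i$, and the lift $\widetilde{\phi}$ then sends $A_i$ to $B_i$ since it preserves the last coordinate.

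The main obstacle is the first step, which is the entire point of the rigidly two-layered hypothesis: without it, matching pairwise distances would not force the two tuples to distribute across the bases in compatible ways, and one could easily produce examples where no isometry of $\widetilde{P}$ realizes the matching. Once the base partitions are synchronized, steps two through four are essentially formal, relying only on the product structure of $\widetilde{P}$ and the assumed $m$-point homogeneity of $P$.
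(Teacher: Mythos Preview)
Your proof is correct and follows essentially the same strategy as the paper's: use the rigidly two-layered hypothesis to align the base partitions of the two $m$-tuples, project to $P$ to invoke its $m$-point homogeneity, and lift the resulting isometry back to $\widetilde{P}$. The only cosmetic difference is that you normalize via the flip $\eta$ at the outset so that $a_i=b_i$ throughout, whereas the paper derives $(\alpha_i-\alpha_j)^2=(\beta_i-\beta_j)^2$ first and then splits into the two cases $\beta_i=\alpha_i$ versus $\beta_i=c-\alpha_i$ at the end; both arrive at the same place.
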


\begin{proof} For $m=1$, the theorem follows from Proposition \ref{pr.constr.1}. Now, we suppose that $m \geq 2$.
Without loss of generality we may assume that
any vertex of $\widetilde{P}$ has coordinates $(a_1,a_2,\dots, a_n, a_{n+1})$, where
$(a_1,a_2,\dots, a_n)$ is a vertex of $P$ and $a_{n+1} \in \{0,c\}$.
For any vertex $A$ of $\widetilde{P}\subset \mathbb{R}^{n+1}$, we denote by $A'$ the vertex of $P\subset \mathbb{R}^{n}$,
whose coordinates are exactly the $n$ first coordinates of $A$.

Let us consider any $m$-tuples $(A_1, A_2, \dots, A_m)$ and $(B_1, B_2, \dots, B_m)$ from the vertex set of $\widetilde{P}$,
such that $ d(A_i,A_j)=d(B_i,B_j)$, $i,j=1,\dots,m$, where $d$ means the standard Euclidean distance.
It suffices to prove that there is an isometry $\tau$ of $\widetilde{P}$ that moves the first $m$-tuple to the second one.
For this, we consider the corresponding $m$-tuples $(A'_1, A'_2, \dots, A'_m)$ and $(B'_1, B'_2, \dots, B'_m)$ from the vertex set of~$P$.

Let us denote by $\alpha_i$ and $\beta_i$ the $(n+1)$-th coordinates of the points $A_i$ and $B_i$ respectively.
By our assumption, $d(A_i,A_j)=d(B_i,B_j)$ for all $i,j=1,\dots,m$. This could be rewritten as follows:
\begin{equation}\label{eq.prod.1}
d^2({A'}_i,{A'}_j)+(\alpha_i-\alpha_j)^2=
d^2({B'}_i,{B'}_j)+(\beta_i-\beta_j)^2.
\end{equation}
If $(\alpha_i-\alpha_j)^2\neq (\beta_i-\beta_j)^2$, then
$\bigl\{|\alpha_i-\alpha_j|,|\beta_i-\beta_j|\bigr\}=\left\{0,c \right\}$
and, consequently, \eqref{eq.prod.1} contradicts the condition that $c^2 \not\in D(P)$ (see Lemma \ref{le.constr.2}). Since
the prism $\widetilde{P}$ is rigidly two-layered, then
$(\alpha_i-\alpha_j)^2= (\beta_i-\beta_j)^2$
and, therefore, $d(A'_i,A'_j)=d(B'_i,B'_j)$.

Since the polytope $P$ is $m$-point homogeneous, there is an isometry $\tau$ of $P$ that moves $(A'_1, A'_2, \dots, A'_m)$ to $(B'_1, B'_2, \dots, B'_m)$.

Now, we consider $I_1=\{1\leq i \leq m\,|\, \alpha_i=0\}$ and $I_2=\{1\leq i \leq m\,|\, \alpha_i=c\}$. Obviously, $I_1 \bigcup I_2=\{1,2,\dots,m\}$.

Since $(\alpha_i-\alpha_j)^2= (\beta_i-\beta_j)^2$ for all $i,j=1,\dots,m$, then $\beta_i=\beta_j$ if and only if either $i,j \in I_1$, or  $i,j \in I_2$.
Therefore, we have two possibilities: 1) $\beta_i= \alpha_i$, 2) $\beta_i= c-\alpha_i$ for all indices $i$
(in this case the cardinalities of $I_1$ and $I_2$ coincide and $m$ is an even number).

In each of these two cases we take the isometry $\tau$ of $\mathbb{R}^{n+1}$ to be of the following form, respectively:
$$
\bigl(x_1,x_2,\dots, x_n, x_{n+1}\bigr) \mapsto \bigl(\tau(x_1,x_2,\dots, x_n), x_{n+1}\bigr),
$$
$$
\bigl(x_1,x_2,\dots, x_n, x_{n+1}\bigr) \mapsto \bigl(\tau(x_1,x_2,\dots, x_n), c-x_{n+1}\bigr).
$$
It is clear that $\tau$ is an isometry of $\widetilde{P}$ and it moves $(A_1, A_2, \dots, A_m)$ to $(B_1, B_2, \dots, B_m)$.
The theorem is proved.
\end{proof}

\begin{example}\label{ex.tpoin.1}
The triangle $P$ with the vertices $A=(0,0)$, $B=(2,0)$, and $C=(1,\sqrt{3})$, is regular, $2$-point homogeneous, and even $m$-point homogeneous for any $m \in \mathbb{N}$.
The prism $\widetilde{P}(c) =P \times [0,c]$ is  homogeneous for any positive $c$ by Proposition \ref{pr.constr.1}. We see that $D(P)=\{0,4\}$
(in the notation of Lemma \ref{le.constr.2}).
Therefore, $\widetilde{P}(c)$ is $m$-point homogeneous, for all $m \in \mathbb{N}$ and for all $c\in (0,2)\bigcup\,(2,\infty)$ by Theorem \ref{th.constr.2}.
On the other hand, the prism $\widetilde{P}(2)=P \times [0,2]$ is not $2$-point homogeneous. Indeed, there is no isometry that moves the pair of vertices
$\bigl((0,0,0),\,(2,0,0)\bigr)$ to the pair of vertices $\bigl((0,0,0),\,(0,0,2)\bigr)$. It follows from Proposition \ref{pr.prism.3.0}, Lemma \ref{le.constr.2}, and
the fact that the prism $\widetilde{P}(c)$ is strongly prismatic for any $c>0$.
\end{example}

\begin{example}\label{ex.tpoin.1n}
The square $P$ with the vertices $A=(0,0)$, $B=(1,0)$, $C=(1,1)$, and $D=(0,1)$, is regular and $m$-point homogeneous for any $m \in \mathbb{N}$.
The prism $\widetilde{P}(c) =P \times [0,c]$ is  homogeneous for any positive $c$ by Proposition \ref{pr.constr.1}. We see that $D(P)=\{0,1,2\}$
(in the notation of Lemma \ref{le.constr.2}).
Therefore, $\widetilde{P}(c)=[0,1]^2\times [0,c]$ is $m$-point homogeneous, for all $m \in \mathbb{N}$
and for all $c\in (0,1)\bigcup (1,\sqrt{2})\bigcup\,(\sqrt{2},\infty)$ by Theorem \ref{th.constr.2}.
For the special cases $c=1$ and $c=\sqrt{2}$, Theorem \ref{th.constr.2} is not useful, but we can apply other ideas.

If $c=1$, then $\widetilde{P}(1)=[0,1]^3$ is a regular cube, hence, $\widetilde{P}(1)$ is $m$-point homogeneous for any $m \in \mathbb{N}$.
Notice that the regular cube is not strongly prismatic.

If $c=\sqrt{2}$, we get a rectangular parallelepiped $\widetilde{P}(\sqrt{2})$.
It is clear that $\widetilde{P}(\sqrt{2})=[0,1]\times [0,1]\times [0,\sqrt{2}]$ is strongly prismatic and is not $2$-point
homogeneous by Proposition \ref{pr.prism.3.0} and Lemma \ref{le.constr.2}.
\end{example}

Let us note that a cube is a right prism over a square, a regular tetrahedron is a right antiprism over  a  segment, and
a regular octahedron is a right antiprism over  a regular triangle. On the other hand, a regular dodecahedron, as well as a regular icosahedron,
is not a right prism or a right antiprism.

\begin{prop}\label{prop.n5n}
Let $P$ be a right prism over a regular $n$-gon, $n\geq 3$, or a right antiprism $P$ over a regular $n$-gon, $n\geq 2$
{\rm(}a regular $2$-gon is understood to be a segment{\rm)}.
Then $P$ is homogeneous.
In addition, $P$ is $2$-point homogeneous if and only if one of the following conditions is fulfilled:
\begin{enumerate}

\item $P$ is rigidly two-layered;

\item $P$  is a cube, a regular tetrahedron, or  a regular octahedron.

\end{enumerate}
\end{prop}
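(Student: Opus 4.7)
The plan is to handle homogeneity first, then each direction of the two-point homogeneity equivalence separately.

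Homogeneity is immediate for right prisms by Proposition \ref{pr.constr.1} applied to the regular $n$-gonal base. For a right antiprism over a regular $n$-gon, the group generated by the rotations by $2\pi k/n$ around the central axis (which act transitively within each base) together with the isometry that rotates by $\pi/n$ around the axis and reflects across the horizontal midplane (which swaps the two bases) acts transitively on the $2n$ vertices.

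For the only if direction, assume $P$ is $2$-point homogeneous. By Proposition \ref{pr.prism.3.0}, if $P$ is strong then $P$ is rigidly two-layered, so it suffices to identify the non-strong cases. The strategy is a face-type case analysis. For a right prism over a regular $n$-gon with $n\neq 4$, the $n$-gonal bases are distinguished from the rectangular laterals by their vertex counts, so the unordered pair of bases is preserved by every symmetry; for $n=4$, either $P$ is the cube, or the laterals are non-square rectangles and again distinguishable from the square bases. For a right antiprism over a regular $n$-gon with $n\geq 4$, the $n$-gonal bases are similarly distinguished from the triangular laterals. For $n=3$ all eight faces are triangles, so either $P$ is the regular octahedron, or the equilateral base triangles and the (merely) isoceles laterals are distinguished by edge lengths; in the latter case the pair of bases is preserved. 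For $n=2$, $P$ is a tetrahedron with two base edges of one length and four lateral edges of another; unless all six edges are equal (regular tetrahedron), the base edges form an isometry-invariant pair. In every remaining case $P$ is strong, hence rigidly two-layered by Proposition \ref{pr.prism.3.0}, establishing one direction.

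For the if direction, a cube, regular tetrahedron, or regular octahedron is a regular polyhedron and hence $m$-point homogeneous for every $m$ by Theorem \ref{th:reg_pol}, in particular $2$-point homogeneous. If $P$ is a rigidly two-layered right prism, then $P=B\times[0,c]$ for a regular $n$-gon $B$, which is $m$-point homogeneous for every $m$ by Theorem \ref{th.2dim.1}; Theorem \ref{th.constr.2} then yields $2$-point homogeneity of $P$. For a rigidly two-layered right antiprism I apply Proposition \ref{pr:two-point homogeneous} directly: fix a vertex $V$ of the top base and consider the reflection $\sigma$ through the vertical plane spanned by $V$ and the central axis. This $\sigma$ lies in $\I(V)$, preserves each base setwise, and acts on each base as the dihedral reflection fixing the projection of $V$ onto that base. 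Rigid two-layering forces every sphere $S(V,r)$ to lie in a single base, and within a regular $n$-gon the vertices at a given distance from a fixed reference point occur in mirror pairs that are swapped by $\sigma$. Hence $\I(V)$ acts transitively on every sphere $S(V,r)$, as required.

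The main obstacle is exactly this antiprism sub-case in the if direction: Theorem \ref{th.constr.2} applies only to Cartesian products and not to antiprisms, so the productive prism argument cannot be transplanted verbatim. The workaround is the direct isotropy calculation sketched above; its success rests on two facts, namely that $\sigma$ alone already realises the needed transitivity within each regular $n$-gonal base, and that rigid two-layering prevents a sphere around $V$ from mixing vertices of different bases.
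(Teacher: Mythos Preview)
Your proof is correct and follows essentially the same approach as the paper: strongness (via face-type distinction) plus Proposition~\ref{pr.prism.3.0} for the ``only if'' direction, and the reflection $\sigma$ through the vertical plane containing $V$ and the axis plus Proposition~\ref{pr:two-point homogeneous} for the ``if'' direction. The only minor difference is that the paper applies the reflection argument uniformly to both prisms and antiprisms rather than invoking Theorem~\ref{th.constr.2} for the prism case, and it compresses your strongness case analysis into the single observation that a lateral face is not isometric to a base whenever $P$ is not one of the three regular polyhedra listed.
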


\begin{proof}
To prove the homogeneity of $P$, it suffices to note that the cyclic group $C_n$ acts transitively on the vertex set of any base of $P$ and there is an isometry of $P$
that interchanges its bases.

To prove the second assertion, we consider a vertex $A$ of $P$ and the isotropy group $\I (A)$ at this point.
Let $l$ be a straight line through the centers of the bases of $P$. Then the reflection $\sigma_A$
with respect to the plane, that contains $l$ and $A$, is the isometry of $P$. Since $\sigma_A$ fixes $A$, then $\sigma_A \in \I (A)$.

Let us suppose that (1) fulfilled.
If $d(A,A_1)=d(A,A_2)$ for some vertices $A_1, A_2$ of $P$, then $A_1$ and $A_2$ are from the same base, and so the reflection $\sigma_A$ interchanges $A_1$ and $A_2$.
Hence, $P$ is $2$-point homogeneous by Proposition \ref{pr:two-point homogeneous}.

If (2) fulfilled, then
$P$ is a regular polyhedron, hence,
a $2$-point homogeneous polyhedron by Theorem \ref{th:reg_pol}.

On the other hand, if $P$ is $2$-point homogeneous, but is not a regular polyhedron,
then $P$ is strongly prismatic since any of its lateral faces is
not isometric to a base. Hence (1) is fulfilled by Proposition  \ref{pr.prism.3.0}.
\end{proof}
\smallskip

Recall that all $2$-point homogeneous polygons $P$ are found in Theorem \ref{th.2dim.1}: $P$ is
either a regular polygon or a rectangle (these polygons are $m$-point homogeneous for all $m \in \mathbb{N}$).
The following proposition and Proposition \ref{prop.n5n} give us the complete classification of all
$m$-point homogeneous prisms in $\mathbb{R}^3$ for all $m\geq 2$.
\medskip

\begin{prop}\label{pr.prism.3}
Let us consider a prism $P =P_1 \times [0,c] \subset \mathbb{R}^3$ over a polygon $P_1$.  Then the following assertions hold.

1) If $P =P_1 \times [0,c]$ is $m$-point homogeneous for some $m \geq 1$, then the polygon $P_1$ is also $m$-point homogeneous.
In particular, $P_1$ is either a regular polygon
or a rectangle for $m \geq 2$.

2) If  $P =P_1 \times [0,c] \subset \mathbb{R}^3$ is not rigidly two-layered and $P_1$ is not a rectangle, then $P$ is not $2$-point homogeneous.

3) Let $P$ be a rectangular parallelepiped in $\mathbb{R}^3$ of type $[0,a]\times [0,b] \times [0,c]$, where $0<a\leq b \leq c$. Then it is a homogeneous polyhedron.
Moreover, $P$ is $m$-point homogeneous  for any natural $m$ if and only if $a^2+b^2\neq c^2$. If $a^2+b^2 =c^2$, then $P$ is not $2$-point homogeneous.
\end{prop}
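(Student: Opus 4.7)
Part~1 is where the main subtlety lies, and my plan is to reduce $m$-point homogeneity of $P$ to that of the base by lifting and restricting. Given $m$-tuples $(A_1,\dots,A_m)$ and $(B_1,\dots,B_m)$ in $P_1$ with matching mutual distances, I will lift them to $\bar A_i=(A_i,0)$ and $\bar B_i=(B_i,0)$ in the bottom vertex set $V_0\subset V(P)$, invoke the $m$-point homogeneity of $P$ to obtain $\tau\in\Isom(P)$ with $\tau(\bar A_i)=\bar B_i$, and then restrict $\tau$ to the base plane $\{z=0\}$. The key step will be to show that $\tau$ actually preserves this plane: $\tau$ sends the bottom face to a face of $P$ congruent to it, and the only faces are the two bases (each congruent to $P_1$) and the lateral rectangles, so provided the base is not congruent to a lateral face I will have $\tau(V_0)\subset V_0\cup V_c$; then the condition $\tau(\bar A_i)\in V_0$ will rule out the swap $\tau(V_0)=V_c$, and restricting $\tau$ to $V_0$ will give the desired isometry of $P_1$. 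By Theorem \ref{th.2dim.1}, if $P_1$ is not a rectangle then it has $3$ or at least $5$ vertices, so the base is automatically not congruent to a $4$-vertex lateral rectangular face. If $P_1$ is a rectangle, Theorem \ref{th.2dim.1} already gives that $P_1$ is $m$-point homogeneous for every $m$, and the conclusion holds unconditionally. The ``in particular'' claim will then follow directly from Theorem \ref{th.2dim.1}.

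For part~2, the plan is a clean application of Proposition \ref{pr.prism.3.0}. If $P$ is not homogeneous there is nothing to prove; otherwise $P_1$ is homogeneous by part~1, so $P$ is a generalized homogeneous prism with bases $V_0,V_c$ and the reflection $(x,y,z)\mapsto(x,y,c-z)$ as the interchange isometry. The same face-congruence observation as in part~1 will show that when $P_1$ is not a rectangle every isometry of $P$ either preserves or swaps $\{V_0,V_c\}$, so $P$ is strong in the sense of Definition \ref{de.gen.prism.3}. Since $P$ is not rigidly two-layered by hypothesis, Proposition \ref{pr.prism.3.0} immediately yields that $P$ fails to be $2$-point homogeneous.

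Part~3 requires both directions. Homogeneity of $P=[0,a]\times[0,b]\times[0,c]$ is immediate from the reflections through its three coordinate bisector planes. For the sufficiency of $a^2+b^2\neq c^2$, I will view $P$ as a right prism over an appropriate face and apply Theorem \ref{th.constr.2}: if $a<b<c$, take the base $[0,a]\times[0,b]$ of height $c$; if $a=b<c$, take the square $[0,a]^2$ of height $c$; if $a<b=c$, take the square $[0,b]^2$ of height $a$; the cube case $a=b=c$ is covered directly by Theorem \ref{th:reg_pol}. In each of the first three subcases, the ordering $0<a\leq b\leq c$ together with $c^2\neq a^2+b^2$ will rule out every element of the set $D(P_1)$ of Lemma \ref{le.constr.2}, so the prism is rigidly two-layered and Theorem \ref{th.constr.2} delivers $m$-point homogeneity for all $m$. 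For the converse, the equation $a^2+b^2=c^2$ together with $0<a\leq b\leq c$ forces either $a<b<c$ or $a=b<c=a\sqrt{2}$ (the remaining subcases would require $a=0$); in each I will take $A=(0,0,0)$ and verify $d(A,(a,b,0))=c=d(A,(0,0,c))$, then inspect $\I(A)$ — trivial in the first subcase and of order $2$ generated by the swap $x\leftrightarrow y$ in the second — to conclude that $\I(A)$ cannot map the bottom-base vertex $(a,b,0)$ to the lateral vertex $(0,0,c)$, whence Proposition \ref{pr:two-point homogeneous} gives the failure of $2$-point homogeneity. The main technical obstacle throughout will be the restriction step in part~1, where the face-congruence dichotomy between the base and the lateral rectangles is what forces $\tau$ to preserve the base plane.
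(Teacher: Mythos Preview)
Your proposal is correct and follows essentially the same strategy as the paper: show that when $P_1$ is not a rectangle the prism is \emph{strong} (every isometry preserves or swaps the two bases), then use this to restrict isometries to the base in part~1, invoke Proposition~\ref{pr.prism.3.0} in part~2, and combine Lemma~\ref{le.constr.2} with Theorem~\ref{th.constr.2} in part~3. Two small remarks: in part~1 your appeal to Theorem~\ref{th.2dim.1} to count the vertices of $P_1$ is unnecessary and mildly circular (you have not yet established that $P_1$ is homogeneous), but the needed fact --- a non-rectangle base cannot be congruent to a rectangular lateral face --- is trivial anyway; and in part~3 the paper handles the failure case $a^2+b^2=c^2$ by noting that this forces the prism over $[0,a]\times[0,b]$ to be strong and not rigidly two-layered, then citing Proposition~\ref{pr.prism.3.0}, whereas you compute the isotropy group directly and apply Proposition~\ref{pr:two-point homogeneous} --- both arguments work and are essentially equivalent.
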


\begin{proof}
Let us prove the first assertion. Suppose that $P_1$ is not a rectangle. Then $P$ is {\it strongly prismatic}, i.e. for any isometry $\psi$ of $P$,
either $\psi$ preserves $P_1$ or $\psi(P)$
is the second base $P_2$
of $P$ (the first one is $P_1$ itself). Now if we have two $m$-tuples $(A_1,A_2,\dots,A_m)$ and $(B_1,B_2,\dots,B_m)$ of vertices of $P_1$, then there is an isometry $\psi$
of $P$, that moves the first $m$-tuple to the second one. It is clear that $\psi$ preserves $P_1$. Therefore, the base $P_1$ of $P$ is also $m$-point homogeneous.
On the other hand, if $P_1$ is a rectangle, then it is $k$-point homogeneous for all $k \in \mathbb{N}$.
Theorem \ref{th.2dim.1} implies that $P_1$ is either a regular polygon or a rectangle  for $m \geq 2$.

Let us prove the second assertion.
Since $P_1$ is not a rectangle
while all lateral faces are rectangles, then $P$ is strongly prismatic (a rectangle under the action of any isometry can only transform into a rectangle).
Since $P$ is not rigidly two-layered, it is
not $2$-point homogeneous by Proposition  \ref{pr.prism.3.0}.

Let us prove the last assertion.
Obviously, $P$ can be considered as a right prism with height $c$ over the rectangle $P_1=[0,a]\times [0,b]$. We know that $P_1$
is $m$-point homogeneous for all $m\in \mathbb{N}$. If $P=P_1\times [0,c]$ is rigidly two-layered,
then it is $m$-point homogeneous for any natural $m$ according to Theorem \ref{th.constr.2}.

Now  let us consider the case when $P$ is not rigidly two-layered.
By Lemma \ref{le.constr.2}, it is equivalent to one of the
equalities $c = b$ or $c= \sqrt{a^2+b^2}$. Moreover, $P$ is strongly prismatic if and only if $c= \sqrt{a^2+b^2}$.
In this case, $P$ is not $2$-point homogeneous by Proposition \ref{pr.prism.3.0}.

Suppose that  $c = b$. If  $a = b$, then $P$ is the cube which is  $m$-point homogeneous for all
$m \in \mathbb{N}$. If $a<b$, then we can consider $P$ as a prism with a base equal to the
square with side lengths $b$ and having height $a$. For this interpretation of $P$, it is rigidly
two-layered by Lemma \ref{le.constr.2}, and so is $m$-point homogeneous for all $m \in \mathbb{N}$ by Theorem~\ref{th.constr.2}.

All assertions of Proposition \ref{pr.prism.3} are proved.
\end{proof}
\smallskip

We are going to classify all $m$-point homogeneous right antiprisms with $n$-gonal regular bases.
Any such antiprism has two bases (each isometric to other regular $n$-gons), $2n$ lateral triangular faces (each isometric to other isosceles triangles),
$2n$ vertices ($n$ vertices on each base). For $n=2$ (in this case bases are segments) and $n=3$ we get homogeneous tetrahedra and octahedra respectively.

\begin{prop}\label{pr.prism.3.00}
Let $P$ be a right  antiprism in $\mathbb{R}^3$  over a regular $n$-gon, $n\geq 2$. If $P$ is  rigidly two-layered {\rm(}as a generalized right prism{\rm)} or
$P$ is a regular polyhedron,
then it is $m$-point homogeneous for all $m \in \mathbb{N}$.
Otherwise, if $P$ is not rigidly two-layered and $P$ is not a regular polyhedron, then $P$ is not $2$-point homogeneous.
\end{prop}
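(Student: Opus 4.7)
The plan is to handle the two directions of the proposition separately. For the sufficient direction, if $P$ is a regular polyhedron then $P$ is the regular tetrahedron ($n=2$, uniform) or the regular octahedron ($n=3$, uniform), and Theorem~\ref{th:reg_pol} immediately gives $m$-point homogeneity for every $m$. So assume $P$ is rigidly two-layered, place the two bases $B_b, B_t$ in the planes $z=0$ and $z=h$, and let $\pi$ denote orthogonal projection onto the $xy$-plane. Then $\pi$ sends the vertex set of $P$ bijectively onto the vertex set of a regular $2n$-gon $Q$, with $\pi(B_b)$ and $\pi(B_t)$ forming the two alternating $n$-subsets of vertices of $Q$.

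My first step is to observe that $\pi(B_b)$ and $\pi(B_t)$ are the only two $n$-element subsets of the vertex set of $Q$ containing no pair of consecutive vertices, so every $\sigma \in \Isom(Q)$ must either preserve or swap the bipartition $\{\pi(B_b),\pi(B_t)\}$; consequently each such $\sigma$ lifts to a unique isometry $\tau\in\Isom(P)$ acting on the $z$-coordinate by $z\mapsto z$ in the bipartition-preserving case and $z \mapsto h-z$ in the bipartition-swapping case, and conversely every isometry of $P$ descends to one of $Q$. Given $m$-tuples $(A_1,\dots,A_m)$ and $(B_1,\dots,B_m)$ of vertices of $P$ with $d(A_i,A_j)=d(B_i,B_j)$ for all $i,j$, the decomposition $d^2(A_i,A_j)=d^2(\pi(A_i),\pi(A_j))+(z(A_i)-z(A_j))^2$ together with $(z(A_i)-z(A_j))^2\in\{0,h^2\}$ and the rigid two-layered hypothesis force $(z(A_i)-z(A_j))^2=(z(B_i)-z(B_j))^2$ for all $i,j$. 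Hence $d(\pi(A_i),\pi(A_j))=d(\pi(B_i),\pi(B_j))$; since the regular $2n$-gon $Q$ is $m$-point homogeneous by Theorem~\ref{th.2dim.1}, there exists $\sigma\in\Isom(Q)$ with $\sigma(\pi(A_i))=\pi(B_i)$, and a case check (according to whether $\sigma$ preserves or swaps the bipartition) shows that its lift $\tau$ satisfies $\tau(A_i)=B_i$.

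For the necessary direction, suppose $P$ is neither a regular polyhedron nor rigidly two-layered; the plan is to show $P$ is strong in the sense of Definition~\ref{de.gen.prism.3} and then apply the contrapositive of Proposition~\ref{pr.prism.3.0}. For $n\geq 4$, the bases are $n$-gons while the lateral faces are triangles, so no isometry can carry a base to a lateral face. For $n=3$ non-uniform, the equilateral base triangles have every edge of length $a$ whereas each lateral (isosceles) triangle has two edges of a different length, again preventing any such identification. For $n=2$ non-uniform, $P$ is a tetrahedron whose two base edges have length distinct from its four lateral edges, so the unordered pair of base edges is preserved by every isometry of $P$. In every case every isometry of $P$ either preserves both bases or interchanges them, so $P$ is strong, and Proposition~\ref{pr.prism.3.0} then yields the non-$2$-point-homogeneity. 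The main technical obstacle is the structural lemma that $\Isom(P)\cong\Isom(Q)$ via $\pi$, which rests on the combinatorial identification of $\pi(B_b)$ and $\pi(B_t)$ as the two maximum independent sets in the $2n$-cycle underlying $Q$; once that is established, the rest of the sufficient direction is a direct antiprismatic analogue of the proof of Theorem~\ref{th.constr.2}.
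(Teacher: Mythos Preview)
Your proof is correct, but the route you take for the sufficient direction differs from the paper's. The paper first invokes Proposition~\ref{prop.n5n} (which already contains your ``necessary'' argument via strongness and Proposition~\ref{pr.prism.3.0}) to reduce to showing that a $2$-point homogeneous right antiprism is $m$-point homogeneous for all $m$; it then proceeds by induction on $m$, using the key geometric fact that if $L_{A_1}\neq L_{A_2}$ are the two planes through the axis and the chosen vertices, then any vertex $C$ with prescribed distances to both $A_1$ and $A_2$ is uniquely determined. Your approach is instead a direct antiprism analogue of Theorem~\ref{th.constr.2}: you project onto the regular $2n$-gon $Q$, use the rigid two-layered hypothesis to match both the planar distances and the base-membership data of the two $m$-tuples, invoke the $m$-point homogeneity of $Q$ (Theorem~\ref{th.2dim.1}), and then lift the resulting isometry of $Q$ back to $P$ via the bipartition dichotomy. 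This is arguably cleaner --- it avoids induction, unifies the prism and antiprism cases, and makes the role of rigid two-layeredness completely transparent --- whereas the paper's inductive argument is more self-contained and does not rely on the classification of $2$-point homogeneous polygons. One small point: your remark that ``conversely every isometry of $P$ descends to one of $Q$'' is not actually used in your argument (only the lifting direction is needed), so you could omit it.
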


\begin{proof}
According to Proposition \ref{prop.n5n}, we need to prove only the following assertion: Any $2$-point homogeneous right antiprism $P$
is $m$-point homogeneous for any $m \geq 3$.
Let $l$ be a straight line through the centers of bases of $P$. For a vertex $U$ of $P$, we consider the plane $L_U$, that contains $l$ and $U$.
It is clear that the reflection with respect to the plane $L_U$ is an isometry of $P$.

We are going to prove that $P$ is $m$-point homogeneous using induction. Now, we assume that $m \geq 3$ and $P$ is
$k$-point homogeneous for all $k=1,2,\dots, m-1$.

Let us consider two $m$-tuples
$(A_1,A_2,\dots,A_m)$ and $(B_1,B_2,\dots,B_m)$ of vertices of $P$, such that $d(A_i,A_j)=d(B_i,B_j)$, $i,j=1,\dots,m$.

If there are $i\neq j$ such that $A_i=A_j$, then we can omit $A_j$ and $B_j$ in the above $m$-tuples and use the $(m-1)$-point homogeneity of $P$
in order to find an isometry $\eta$ of $P$, moving the first $(m-1)$-tuple to the second one. Since $A_i=A_j$ and, hence, $B_i=B_j$, then $\eta$
also moves the first $m$-tuple to the second one.

In what follows, we assume that $A_i \neq A_j$ for $i\neq j$.

Now, let us prove that there are $i\neq j$ such that the plane $L_{A_i}$ is not equal to
the plane $L_{A_j}$. Indeed, otherwise all vertices $A_i$, $i=1,\dots,m$, are in one plane through the line $l$.
Using the structure of the right antiprism and the fact that all $A_i$ are pairwise distinct, it is easy to see that $m\leq 2$ in this case,
that contradicts to our assumptions. Hence, such $i\neq j$ do exist, i.~e., $L_{A_i} \neq L_{A_j}$.
Further, we can reorder elements of the above $m$-tuples such that $i=1$ and $j=2$.

Using the $2$-point homogeneity of $P$, we can find an isometry $\nu$ such that $\nu(B_1)=A_1$ and $\nu(B_2)=A_2$.
We have $d(A_1,A_i)=d(A_1,\nu(B_i))$, $d(A_2,A_i)=d(A_2,\nu(B_i))$ for every fixed $i\geq 3$.
It suffices to prove that there is an isometry $\mu$ of $P$ such that $\mu(A_1)=A_1$, $\mu(A_2)=A_2$, and $\mu(A_i)=\nu(B_i)$, $i=3,4,\dots,m$.

For any vertices $C$ and $D$ of $P$,
the equalities $d(A_1,C)=d(A_1,D)$ and $d(A_2,C)=d(A_2,D)$ mean, that $C$ and $D$ are symmetric simultaneously with respect to the planes
$L_{A_1}$ and $L_{A_2}$.
If $C\neq D$, then both planes $L_{A_1}$ and $L_{A_2}$ contain the midpoint $E$ of the segment $[C,D]$ and are orthogonal to the straight line $CD$.
Hence $L_{A_1}=L_{A_2}$, which contradicts to the choice of $A_1$ and $A_2$.
Therefore, we get $C=D$. This argument is valid for all pairs $(C,D)=(A_i,\nu(B_i))$, $i \geq 3$, i.e.
$A_i=\nu(B_i)$ for any $i=3,4,\dots,m$. Hence, the isometry $\nu$ moves $(B_1,B_2,\dots,B_m)$ to $(A_1,A_2,\dots,A_m)$,
that proves the $m$-point homogeneity of $P$.
\end{proof}

\section{On homogeneous polyhedra in $\mathbb{R}^3$}\label{sec.7}

An interesting description of homogeneous polyhedra can be found in \cite{Rob,RobCar}
(more general results for homogeneous polytopes in multidimensional Euclidean spaces can be found also in \cite{RobCarMor}).
Another description of homogeneous polyhedra follows from the classification of all tilings on the $2$-sphere
with natural transitivity properties \cite{GrShe81}.
Here we collect some useful results on the structure of homogeneous polyhedra
in order to study more narrow classes.

Let $F$ be a convex polygon in $\mathbb{R}^2$ and $A, B$ be two of its vertices. We say that $A$ is equivalent to $B$ if there is an isometry $\psi$ of $F$ such that
$\psi(A)=B$. It is easy to check that this is equivalence relation.
Hence, the vertices of $F$ are collected into equivalence classes with respect to this equivalence relation.
We will denote the {\it vertex type} of $F$ as $(n_1,n_2,\dots, n_s)$, where $n_1\geq n_2 \geq \cdots \geq n_s$ means the quantity of elements in different
{\it equivalence classes}.
Note that $n:=n_1+n_2+\cdots +n_s$ is the number of vertices of $F$. For example, the vertex type of a regular $n$-gon is $(n)$, the vertex type of an
isosceles but not regular triangle is $(2,1)$,
the vertex type of a rectangle is $(4)$,
the vertex type of an isosceles trapezoid (which is not a rectangle) is $(2,2)$.

We will say that a planar polygon is {\it inscribed} if all its vertices lie on some circle.

\begin{lemma}\label{le_m.m}
Let $F$ be an inscribed (planar) $n$-gon with the vertex type  $(m,m)$, where $n=2m$, $m\in \mathbb{N}$.
Then $F$ has sides of $2$ or $3$ different lengths and the number $m$ is even. If $m=2$, then $F$ is an isosceles trapezoid.
\end{lemma}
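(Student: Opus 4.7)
The plan is to classify $F$ via its symmetry group $G\leq O(2)$, which acts on the $2m$ vertices with exactly two orbits of size $m$. The first step is a key reduction: I would rule out the scenario in which each orbit equals a single rotation-orbit under $R:=G\cap SO(2)$. Since rotations act freely on the circle, such a $G$-orbit is necessarily a regular $m$-gon inscribed in the circumscribed circle of $F$; if both orbits are of this type, then $F$ is the convex hull of two regular $m$-gons at angular offsets $0$ and $\psi$ with $0<\psi<2\pi/m$. I would then check directly that the reflection $\alpha\mapsto\psi-\alpha$ swaps the two $m$-gons and hence is a symmetry of $F$, which forces the true symmetry group to act transitively on all $2m$ vertices and gives vertex type $(2m)$, a contradiction.

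This reduction immediately forces $m$ to be even. Indeed, a cyclic $G=C_m$ would produce two rotation-orbits, ruled out above; a dihedral $G=D_m$ with orbit-size $m$ and $\mathbb{Z}_2$ vertex stabilizers would have its rotation subgroup $C_m$ acting transitively on each orbit, again ruled out; the only remaining possibility is a dihedral $G=D_k$ with trivial vertex stabilizers and orbits of size $2k=m$, forcing $m=2k$ even. Set $p:=m/2$, so $G=D_p$.

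I would then analyze the orbit structure. Under $C_p\leq D_p$ acting freely, each $D_p$-orbit splits into two $C_p$-orbits, so $F$'s vertex set is the union of four regular $p$-gons inscribed in the circumscribed circle of $F$, with representative angles $\beta_1<\beta_2<\beta_3<\beta_4$ in $[0,2\pi/p)$. Any reflection in $D_p$ induces on these four representatives the involution $\beta\mapsto\mu-\beta\pmod{2\pi/p}$ for some $\mu$, and triviality of stabilizers implies no $\beta_i$ is fixed, so the four points pair up. As reflections on the circle $\mathbb{R}/(2\pi/p)\mathbb{Z}$ yield only non-crossing pairings, I get either case (A) $\{\beta_1,\beta_2\}$ and $\{\beta_3,\beta_4\}$ with $\beta_3+\beta_4-\beta_1-\beta_2=2\pi/p$, or case (B) $\{\beta_1,\beta_4\}$ and $\{\beta_2,\beta_3\}$ with $\beta_4-\beta_3=\beta_2-\beta_1$. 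Writing $a_i$ for the four arcs in one period (with $a_4=2\pi/p+\beta_1-\beta_4$), case (A) translates to $a_2=a_4$ and case (B) to $a_1=a_3$. Hence the four arcs in a fundamental period take at most three distinct values, so $F$ has at most three distinct side lengths (and at least two, else $F$ is a regular $2m$-gon with vertex type $(2m)$).

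For the final claim, take $m=2$ so $p=1$: then $F$ is a quadrilateral with vertices $V_i$ at angles $\beta_i$. In case (A), the reflection axis passes through the midpoints on the circle of the arcs $V_1V_2$ and $V_3V_4$, which are antipodal; being the perpendicular bisectors of the chords $V_1V_2$ and $V_3V_4$, the axis is perpendicular to both, so these chords are parallel. The remaining sides $V_2V_3$ and $V_4V_1$ are swapped by the reflection and hence have equal length, so $F$ is an isosceles trapezoid; case (B) is analogous. The main obstacle is the key reduction itself: verifying that the convex hull of two concentric regular $m$-gons at any offset always admits the bisecting reflection as a vertex-swapping symmetry. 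Once this is established, the rest reduces to a combinatorial analysis of non-crossing pairings of four points on a circle.
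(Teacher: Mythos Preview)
Your proof is correct and takes a genuinely different route from the paper. The paper argues purely combinatorially on the cyclic sequence of edge lengths: it bounds the maximal run of equal-length consecutive edges, then case-splits on whether two or three lengths occur and reads off the pattern (edges of the ``rare'' lengths must be flanked on both sides by the ``common'' length), from which parity of $m$ and the trapezoid conclusion for $m=2$ fall out by counting. Your approach instead classifies the full isometry group $G\le O(2)$ via its orbit sizes, eliminates $G=C_m$ and $G=D_m$ by the bisecting-reflection trick (which would force vertex type $(2m)$), and is left with $G=D_{m/2}$, immediately giving $m$ even; the side-length bound then reduces to a clean non-crossing pairing argument on four arc-representatives modulo $2\pi/p$. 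Your method is more structural and explains \emph{why} $m$ is even in one stroke (it is the order of a dihedral group with trivial stabilizers), and the arc-pairing step is tidier than the paper's case analysis. The paper's argument, on the other hand, is more elementary (no subgroup classification needed) and yields finer explicit information about how the edge lengths are arranged around the boundary, which is used later in Proposition~\ref{pr.four.edges}.
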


\begin{proof}
It is clear that $F$ is not homogeneous and has edges of different lengths (an inscribed polygon with edges of equal length is regular with the vertex type  $(2m)$).
If $F$ has more than three consecutive edges of one and the same length,
then we have at least three types of vertices of $F$ that could not be interchanged by some isometries, which contradicts the vertex type being $(m,m)$.

Let us denote by $A_1, A_2,\dots, A_n$ consecutive vertices of $F$ (we assume that $i \in \mathbb{Z}_n$). Since $F$
has the vertex type  $(m,m)$, then there are two vertices $A_i$ and $A_{i+1}$ of $F$ such that the edges $[A_{i-1},A_i]$ and $[A_{i},A_{i+1}]$ have lengths
$l_1$ and $l_2$ respectively, $l_2\neq l_1$.

Let us suppose that all edges of $F$ have only lengths $l_1$ and $l_2$.
Note that the edges of different lengths can not be alternate. Otherwise, $F$ is homogeneous, contradicting having vertex type  $(m,m)$.
Let $L$ be the maximum number of consecutive edges of $F$ of the same length (say, $l_2$). The above argument shows that $L\geq 2$.
If  $L>3$, then we have at least three types of vertices of~$F$ that could not be interchanged by some isometries,
which contradicts the vertex type being $(m,m)$. Therefore, $L \leq 3$.
Hence, either $L=2$ or $L=3$.
We see that there are vertices of $F$ with pairs of adjacent edges of lengths
$\{l_1,l_2\}$ and $\{l_2,l_2\}$. Since $F$
has the vertex type  $(m,m)$, then any vertex of $F$ has edges of lengths
either $\{l_1,l_2\}$ or $\{l_2,l_2\}$. Hence, for any edge of length $l_1$, both of its adjacent edges must have length $l_2$. Therefore, any
such edge has two vertices with pairs of adjacent edges corresponding to lengths
$\{l_1,l_2\}$. Therefore, we have $m/2$ (in particular, $m$ is even) edges of length $l_1$, while all other edges ($3m/2$ edges) have length $l_2$.
If $m=2$, $F$ is an isosceles trapezoid with three edges of equal length.

Now, let us suppose that all edges of $F$ have three lengths $l_1$, $l_2$, and $l_3$.
Since $F$ has the vertex type  $(m,m)$ and $A_i$ has adjacent edges with lengths $l_1$ and $l_2$,
then every vertex of $F$ has edges of lengths either $\{l_1,l_2\}$, or $\{l_k,l_3\}$, where  either $k=1$, or $k=2$.
Without loss of generality, we may assume that $k=2$.
Then any vertex of $F$ has edges of lengths
either $\{l_1,l_2\}$ or $\{l_2,l_3\}$. Moreover, the quantities of vertices of these two kinds coincide. It is easy to see that any edge of length $l_1$ or $l_3$
have both adjacent edges of length $l_2$, and the edges of the same length could not be adjacent.
Hence, all edges through one have length $l_2$, and all other edges have lengths either $l_1$ or $l_3$, moreover, there are an equal number (namely, $m/2$)
of edges of the last two lengths.
In particular, the number $m$ is even. If $m=2$, then $F$ is an isosceles trapezoid.
\end{proof}

\begin{prop}\label{pr.idex.1}
Suppose that a homogeneous polyhedron $P$ in $\mathbb{R}^3$ has a face $F$ with the vertex type $(n_1,n_2,\dots, n_s)$. If $w=\gcd (n_1,n_2,\dots,n_s)$,
then there is a natural number $l\geq 1$ such that any vertex $A$ of $P$ has exactly $l\cdot n_i/w$ isometric copies of $F$ among adjacent faces such that
the vertex $A$ on every such face corresponds to $i$-th equivalence class of vertices of $F$.
\end{prop}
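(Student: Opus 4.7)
The argument will combine a double counting with the vertex-transitivity of the symmetry group $\Symm(P)$. For each vertex $A$ of $P$ and each index $i\in\{1,\dots,s\}$, let $f_i(A)$ be the number of faces $F'$ of $P$ that are isometric to $F$, contain $A$, and have $A$ lying in the $i$-th equivalence class of the vertex set of $F'$. This count is well defined: the equivalence class of a vertex of $F'$ is an intrinsic invariant, and the identification with the $i$-th class of $F$ is canonical because any two isometries between $F'$ and $F$ differ by a self-isometry of $F$, which by the very definition of the vertex type permutes each equivalence class into itself.

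First I would use the homogeneity of $P$: any $\psi\in\Symm(P)$ carries faces isometric to $F$ to faces isometric to $F$ and preserves the vertex equivalence classes inside each such face, so $f_i(\psi(A))=f_i(A)$. Since $\Symm(P)$ acts transitively on the vertex set, $f_i(A)$ is the same for every vertex~$A$; denote the common value by $f_i$. Letting $V$ be the number of vertices of $P$ and $N$ the number of faces of $P$ isometric to $F$, I would then count pairs $(A,F')$ with $F'\cong F$ containing $A$ in its $i$-th equivalence class in two ways:
\[
V\cdot f_i \;=\; N\cdot n_i, \qquad \text{so}\qquad f_i \;=\; \frac{N\,n_i}{V}.
\]
Setting $l:=Nk/V$, the desired equality $f_i=l\cdot n_i/k$ holds for every $i$.

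It remains to verify that $l$ is a positive integer. Positivity is immediate: $F$ itself is among the faces counted by $N$, so $N\ge 1$ and $l=Nk/V>0$. Integrality I would obtain from B\'ezout's identity: choose $c_1,\dots,c_s\in\mathbb{Z}$ with $c_1 n_1+\cdots+c_s n_s=k$, and observe that
\[
l \;=\; \frac{k}{V}\cdot N \;=\; \sum_{j=1}^{s} c_j\cdot\frac{N\,n_j}{V} \;=\; \sum_{j=1}^{s} c_j\,f_j \;\in\;\mathbb{Z}.
\]
A positive integer is at least $1$, giving $l\ge 1$. The step that warrants the most care, and is really the only nontrivial one, is the well-definedness of ``the $i$-th equivalence class of $A$ in $F'$'' when $F'$ admits several isometries onto $F$; once this is handled, everything else is routine double counting and a one-line application of B\'ezout.
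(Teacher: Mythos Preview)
Your proof is correct and follows the same strategy as the paper's (very brief) argument: homogeneity forces the count $f_i$ to be independent of the vertex, and then a global count shows that $f_i$ is proportional to $n_i$. Your version is more complete than the paper's sketch, supplying both the explicit double count $V f_i = N n_i$ and the B\'ezout argument for the integrality of $l = Nk/V$, points the paper leaves implicit.
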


\begin{proof} Since $P$ is homogeneous, then there are several faces of $P$ isometric to $F$, moreover, the number of such faces having a given vertex of $P$ as a proper
vertex of the $i$-th class does not depend on the vertex of $P$.
Moreover, the total number of vertices of the $1$-st, $2$-nd, $3$-rd, $\dots$, $s$-th classes across all such faces is proportional to the numbers $n_1,n_2,n_3,\dots, n_s$.
Due to the homogeneity of the polyhedron $P$,
each vertex of the polyhedron is adjacent to the same number of vertices of the above faces from each class.
This implies the proposition.
\end{proof}
\smallskip

\begin{lemma}\label{le.hom_pol-eu1}
Let $P$ be a homogeneous polyhedron in $\mathbb{R}^3$ such that everyone of its verteces is adjacent to exactly $t$ $k$-gonal faces and to $s$ $l$-gonal faces,
where $t,s \geq 1$, $k,l \geq3$.
Denote by $\mathcal{V}$, $\mathcal{E}$, $\mathcal{F}$ the numbers of vertices, edges, and faces of $P$ respectively, while $\mathcal{F}_1$ and $\mathcal{F}_2$
means the quantities of $k$-gonal and $l$-gonal faces.
Then we have
$$
\mathcal{F}=(1+\gamma) \cdot \mathcal{F}_2,\quad \mathcal{E}=\frac{k\gamma+l}{2}  \cdot \mathcal{F}_2, \quad \mathcal{V}=\frac{k\gamma+l}{t+s} \cdot  \mathcal{F}_2,
$$
$$
\mathcal{F}_1=\gamma \cdot  \mathcal{F}_2, \quad \mathcal{F}_2=\frac{4s}{2s(1+\gamma)+2l-s(k\gamma+l)},\quad \mbox{ where } \gamma=\frac{t \cdot l}{k \cdot s}\,.
$$
\end{lemma}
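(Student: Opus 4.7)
The plan is to derive all five identities by standard double counting combined with Euler's formula $\mathcal{V}-\mathcal{E}+\mathcal{F}=2$ (which applies because $P$ is a convex polyhedron in $\mathbb{R}^3$).

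First I would count incidences between vertices and faces, distinguishing the two face types. Since each vertex lies on exactly $t$ $k$-gonal faces and each $k$-gonal face has $k$ vertices, we get $t\mathcal{V}=k\mathcal{F}_1$; likewise $s\mathcal{V}=l\mathcal{F}_2$. Dividing yields
\begin{equation*}
\frac{\mathcal{F}_1}{\mathcal{F}_2}=\frac{t\,l}{k\,s}=\gamma,
\end{equation*}
so $\mathcal{F}_1=\gamma\mathcal{F}_2$ and hence $\mathcal{F}=\mathcal{F}_1+\mathcal{F}_2=(1+\gamma)\mathcal{F}_2$. From $s\mathcal{V}=l\mathcal{F}_2$ we obtain $\mathcal{V}=(l/s)\mathcal{F}_2$; to put this in the desired form I would note that $k\gamma+l=k\cdot\tfrac{tl}{ks}+l=\tfrac{l(t+s)}{s}$, so $\mathcal{V}=\frac{k\gamma+l}{t+s}\mathcal{F}_2$.

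Next I would count edge incidences. Each face of size $k$ (respectively $l$) contributes $k$ (respectively $l$) edges, and each edge belongs to exactly two faces, so $2\mathcal{E}=k\mathcal{F}_1+l\mathcal{F}_2=(k\gamma+l)\mathcal{F}_2$, giving $\mathcal{E}=\tfrac{k\gamma+l}{2}\mathcal{F}_2$. (This also matches counting edges at vertices: each vertex is incident to $t+s$ edges, each edge to $2$ vertices, so $2\mathcal{E}=(t+s)\mathcal{V}$, consistent with the previous formula.)

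Finally, I would substitute the three expressions into Euler's formula $\mathcal{V}-\mathcal{E}+\mathcal{F}=2$ to solve for $\mathcal{F}_2$:
\begin{equation*}
\frac{l}{s}\mathcal{F}_2-\frac{k\gamma+l}{2}\mathcal{F}_2+(1+\gamma)\mathcal{F}_2=2.
\end{equation*}
Multiplying by $2s$ gives $\bigl[2l-s(k\gamma+l)+2s(1+\gamma)\bigr]\mathcal{F}_2=4s$, which rearranges to the claimed formula
\begin{equation*}
\mathcal{F}_2=\frac{4s}{2s(1+\gamma)+2l-s(k\gamma+l)}.
\end{equation*}
There is no real obstacle here; the only care needed is to keep the two types of faces correctly paired with their multiplicities $t,k$ and $s,l$, and to verify that the two expressions for $\mathcal{V}$ agree via the identity $k\gamma+l=l(t+s)/s$. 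The homogeneity hypothesis is used only to ensure that the numbers $t,s$ of adjacent faces of each type are the same at every vertex, which is what makes the double-counting well-defined.
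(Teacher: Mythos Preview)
Your proof is correct and follows essentially the same approach as the paper's: double-count vertex--face incidences to get $\mathcal{V}=\mathcal{F}_1 k/t=\mathcal{F}_2 l/s$ (hence $\mathcal{F}_1=\gamma\mathcal{F}_2$), double-count edge--face incidences to get $2\mathcal{E}=k\mathcal{F}_1+l\mathcal{F}_2$, then plug into Euler's formula. Your write-up is slightly more detailed (the identity $k\gamma+l=l(t+s)/s$ and the consistency check via $2\mathcal{E}=(t+s)\mathcal{V}$), but there is no substantive difference.
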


\begin{proof}
It is clear that $\mathcal{F}_1+\mathcal{F}_2=\mathcal{F}$. Counting all vertices of $k$-gons and $l$-gons we get
$\mathcal{V}=\frac{\mathcal{F}_1k}{t}=\frac{\mathcal{F}_2 l}{s}$, that implies $\mathcal{F}_1=\gamma \mathcal{F}_2$, where $\gamma=\frac{tl}{ks}$.
Counting all edges of $k$-gons and $l$-gons we get $2\mathcal{E}=\mathcal{F}_1k+\mathcal{F}_2l$. Hence we get $\mathcal{V}=\frac{\mathcal{F}_2 l}{s}$,
$\mathcal{E}=\frac{\mathcal{F}_2 (k \gamma+l)}{2}$, $\mathcal{F}=\mathcal{F}_2(\gamma+1)$.
Now, it suffices to apply the Euler formula $\mathcal{V}-\mathcal{E}+\mathcal{F}=2$.
\end{proof}

\medskip

For a convex polyhedron $P$, {\it the defect at a vertex} $A$ equals $2\pi$ minus the sum of all the angles at the vertex $A$
for all the faces of $P$, for which $A$ is a vertex. Since $P$ is convex, then the defect of each vertex is always positive.
Moreover, the sum of the defects of all of the vertices of $P$ is $4\pi$ (Descartes's theorem).
If, moreover, $P$ is homogeneous, the defect of any vertex of $P$ is equal to $4\pi/\mathcal{V}$, where $\mathcal{V}$ is the number of vertices of $P$.

We know that the sum of all angles of the faces at any vertex $A$ of any homogeneous polyhedron $P$ is $<2\pi$.
If $P$ has a face isometric to a polygon $F$ with the type $(n_1,n_2,\dots, n_s)$, then we can estimate from below the sum of all angles at $A$ only of faces
isometric to $F$ (we call it the impact of faces isometric to $F$).
Indeed, by Proposition \ref{pr.idex.1}, this sum is $l/w\cdot \sum_{i=1}^s n_i \alpha_i$, where $\alpha_i$ is the measure of an angle of $F$ with $i$-th type.
We know that $\sum_{i=1}^s n_i \alpha_i= \pi(n-2)$, where $n=\sum_{i=1}^s n_i$. Since $l\geq 1$, we see that
the impact of the faces, isometric to $F$ at the vertex $A$ is at least $\pi(n-2)/w$. We have the following result.

\begin{prop}\label{pr.idex.2}
Let $P$ be a homogeneous polyhedron $P$ in $\mathbb{R}^3$. Then all its faces are inscribed polygons. Moreover, the following assertions are fulfilled.

1) The impact of any face isometric to a regular $n$-gon {\rm(}a polygon with the type set~$(n)${\rm)} is at least $\pi(n-2)/n$ for $n \geq 3$.

2) The impact of any non-regular triangle is $\pi$.

3) The impact of an inscribed quadrilateral with the vertex type $(2,2)$ {\rm(}an isosceles trapezoid{\rm)} is $\pi$.

4) If $P$ has a face isometric to a quadrilateral $Q$, then the vertex type of $Q$ is either $(4)$ or $(2,2)$.

5) $P$ has no face isometric to a non-regular $n$-gon for any odd $n>3$.

6) The impact of any face isometric to a homogeneous $n$-gon, where $n=2m$ and $m\geq 2$, is at least $\pi \left(1 -\frac{1}{m}\right)\geq \pi/2$.

7) The impact of any face isometric to a non-homogeneous $n$-gon, where $n=2m$ and $m\geq 3$, is at least $2\pi\left(1 -\frac{1}{m}\right)\geq 4\pi/3$ and
the vertex type of this $n$-gon is $(m,m)$.
\end{prop}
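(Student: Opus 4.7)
The plan is to derive all seven assertions from the impact formula stated in the paragraph immediately before the proposition. If a face $F$ of $P$ has vertex type $(n_1,\dots,n_s)$, then setting $n=n_1+\cdots+n_s$ and $k=\gcd(n_1,\dots,n_s)$, Proposition \ref{pr.idex.1} yields a positive integer $l$ such that the impact of faces isometric to $F$ at any vertex equals $\frac{l}{k}\sum_{i=1}^{s}n_i\alpha_i=\frac{l}{k}\pi(n-2)\geq \pi(n-2)/k$. The inscribedness of every face is immediate from Corollary \ref{co.inscr.face.1}. Throughout, the global convexity constraint is that the sum of all face-angles at any vertex is strictly less than $2\pi$; in particular each impact must be $<2\pi$.

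For items (1)--(4) the verification reduces to a short case analysis of the possible vertex types. A regular $n$-gon has type $(n)$ with $k=n$, giving (1). A non-regular triangle has type $(1,1,1)$ or $(2,1)$, both with $k=1$ and $n=3$, producing an impact of at least $\pi$ and proving (2). An isosceles non-rectangular trapezoid has type $(2,2)$ with $k=2$ and $n=4$, yielding impact at least $\pi$ and proving (3). For (4), the only vertex types of a quadrilateral are $(4)$, $(2,2)$, $(2,1,1)$, and $(1,1,1,1)$; the last two have $k=1$ and force an impact of at least $2\pi$, contradicting the convexity bound.

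For items (5)--(7), the key is to bound $k$ using the prime factorization of $n$. In (5), $n$ is odd and $n\geq 5$, so its smallest prime factor is at least $3$; non-regularity gives $k<n$, hence $k\leq n/3$, so the impact is at least $3\pi(n-2)/n$, which exceeds $2\pi$ as soon as $n\geq 7$, while $n=5$ has $k=1$ and impact at least $3\pi$ — in either case a face is impossible. In (6), the face is homogeneous with type $(2m)$, so $k=2m$ and the impact is at least $\pi(2m-2)/(2m)=\pi(1-1/m)$. In (7), $n=2m$ is even with smallest prime factor $2$, and non-homogeneity forces $k\leq m$, giving impact at least $2\pi(1-1/m)$. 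To extract the vertex type, I apply the convexity inequality impact $<2\pi$: combined with impact $\geq (l/k)\pi(2m-2)$, this forces $l(m-1)<k\leq m$, hence $l=1$ and $k=m$. Then each $n_i$ is a positive multiple of $m$ summing to $2m$ with $s\geq 2$ (since the face is non-homogeneous), leaving only $(m,m)$.

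The principal obstacle is the divisibility argument in parts (5) and (7): one cannot simply use $k\leq n/2$, but must exploit the smallest prime factor of $n$ to obtain $k\leq n/3$ in (5) and $k\leq m$ in (7). Once those sharper bounds are in place, the remainder is essentially bookkeeping on vertex types combined with the universal convexity bound impact $<2\pi$.
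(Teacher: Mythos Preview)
Your proof is correct and follows essentially the same route as the paper: both arguments rest on the impact formula $\frac{l}{k}\pi(n-2)$ from Proposition~\ref{pr.idex.1} together with the convexity bound $<2\pi$, and items (1)--(4), (6) are handled identically. In (5) and (7) you replace the paper's explicit case split on the values of $k$ by a cleaner divisor argument (the largest proper divisor of $n$ is $n$ divided by its smallest prime factor), which is a mild streamlining but not a different idea. One small remark: in (2) and (3) the statement asserts the impact \emph{equals} $\pi$, not merely $\geq\pi$; this follows because $l\pi<2\pi$ forces $l=1$, a step you (and the paper) leave implicit.
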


\begin{proof} All faces of $P$ are inscribed polygons by Corollary \ref{co.inscr.face.1}. In what follows, we need
$w$ from Proposition \ref{pr.idex.1} (note also that $l\geq 1$ in the same proposition).
The first assertion follows from the equality $w=n$, where $w$ is defined in Proposition \ref{pr.idex.1}.
If a triangle is not regular then its vertex type is either $(2,1)$ or $(1,1,1)$. In any case, $w=1$ and $n-2=1$,
that proves the second assertion. To prove the third assertion it is enough to note that $n-2=w=2$.

Let us prove the fourth assertion. If the vertex type $(n_1,n_2,\dots, n_s)$ of $Q$ is distinct from $(4)$ and $(2,2)$, then $2\leq s \leq 4$, $n_s=1$, and $w=1$
(since  $w=\gcd (n_1,n_2,\dots,n_s)$). Hence, the impact of the faces, isometric to $Q$ is at least $\pi(n-2)/w=\pi(4-2)=2\pi$, that is impossible.

Let us prove the fifth assertion. Let $F$ be a non-regular face of $P$ with $n$ vertices. It is clear that $w$ is odd (since $n$ is odd and
$w$ is a divisor of $n$) and $w<n$ (since $w=n$ implies
that $F$ has the vertex type $(n)$ and $F$ should be homogeneous and even regular due to the fact that $n$ is odd). If $w=1$, then $\pi(n-2)/w =\pi(n-2) \geq 3\pi>2\pi$.
If $w \geq 3$, then $n/w \geq 3$ (since $n > w$ and $n$ is odd) and $2/w <1$, hence $\pi(n-2)/w>2\pi$.
In both cases, the impact of the faces, isometric to $F$ is at least $\pi(n-2)/w>2\pi$.
This contradiction proves the fifth assertion.

The sixth assertion follows from the fact that the vertex type of any homogeneous
$n$-gon is $(n)$ and $n=2m$. Indeed, we have $w=n$ and the impact of the faces, isometric to a given one is at least $\pi(n-2)/w=\pi(m-1)/m\geq \pi/2$.

Now, let us prove the last assertion. It is clear that $w<n$ ($w=n$ implies that $s=1$ and $P$ is homogeneous), hence,
$w\leq m$, which implies  that $\pi(n-2)/w\geq \pi(2-2/m)$.
It is clear that the inequality $w <m$ is equivalent to the fact that the vertex type of the non-homogeneous $n$-gon under consideration is not $(m,m)$.
If $w=1$ then $\pi(n-2)=\pi(2m-2)\geq 4\pi$. If $w=2$ then $\pi(n-2)/w= \pi(m-1)\geq 2\pi$.
If $w \geq 3$, then $\pi(n-2)/w > \pi(n/w -1)\geq \pi (3-1)=2\pi$ (recall that $w$ is a divisor of $n$ and $w<m=n/2$).
\end{proof}

\begin{corollary}\label{co.face.hom.1}
If a polygon $F$ is isometric to a face of some homogeneous polyhedron in $\mathbb{R}^3$, then $F$ is either a triangle or one of the following polygons:

1) a regular $n$-gon for some $n \geq 4$;

2) a non-regular $n$-gon, where $n=2m$, $m\geq 2$, with the vertex type $(2m)$, i.~e., the convex hull of a non-regular orbit of the group $D_m$;

3) an isosceles trapezoid, which is not a rectangle.
\end{corollary}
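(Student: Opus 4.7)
The plan is to analyze $F$ by cases on the number $n$ of its vertices, applying the seven assertions of Proposition \ref{pr.idex.2}. Since every face of a homogeneous polyhedron is inscribed, $F$ is an inscribed polygon, and the case $n=3$ is immediate. For $n=4$, Proposition \ref{pr.idex.2}(4) forces the vertex type of $F$ to be $(4)$ or $(2,2)$; in the first case Lemma \ref{le.homepolygon.1} classifies $F$ as a square (fitting item~1) or a rectangle (fitting item~2 with $m=2$), and in the second case Lemma \ref{le_m.m} with $m=2$ shows $F$ is an isosceles trapezoid, necessarily non-rectangular since a rectangle has vertex type $(4)$ (fitting item~3). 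For odd $n\geq 5$, Proposition \ref{pr.idex.2}(5) forces $F$ to be regular, giving item~1.

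The substantive case is $n=2m$ with $m\geq 3$. If $F$ is homogeneous, Lemma \ref{le.homepolygon.1} identifies $F$ as a regular $2m$-gon (item~1) or as a non-regular orbit of $D_m$ of vertex type $(2m)$ (item~2). If $F$ is non-homogeneous, Proposition \ref{pr.idex.2}(7) forces the vertex type to be $(m,m)$, and Lemma \ref{le_m.m} then forces $m$ to be even, so $m\geq 4$. This configuration does not appear in the conclusion, so it must be excluded; this is the main obstacle.

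To rule out this residual case, I would combine Proposition \ref{pr.idex.1} with the angle identity $\alpha_1+\alpha_2 = 2\pi(m-1)/m$ relating the two vertex classes of $F$. The multiplicity $l$ of Proposition \ref{pr.idex.1} must equal $1$, since otherwise the impact of $F$ at each vertex of $P$ would be at least $4\pi(m-1)/m>2\pi$. Hence exactly two copies of $F$ meet at each vertex of $P$---one of each class---contributing total angle $2\pi(m-1)/m$ and leaving a strict budget of $2\pi/m$ for all remaining face types. For $m\geq 6$ this budget is at most $\pi/3$, while assertions (1)--(7) of Proposition \ref{pr.idex.2} show every possible face type contributes impact at least $\pi/3$; hence no other face fits, leaving only two faces per vertex, which is impossible for a convex polyhedron. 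For $m=4$ the budget is $\pi/2$ and the only admissible type is a single regular triangle per vertex. A short edge-matching around a vertex of $P$---the class-$1$ copy of $F$ contributing adjacent edge-lengths $(l_1,l_2)$ with $l_1\neq l_2$, the class-$2$ copy contributing $(l_2,l_2)$ or $(l_2,l_3)$, and the equilateral triangle contributing three equal edges---then forces the distinct edge-lengths of $F$ to coincide, contradicting the non-homogeneity of $F$. This excludes the remaining case and completes the proof.
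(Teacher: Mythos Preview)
Your proof is correct, and the initial case analysis (triangles, $n=4$, odd $n\ge5$, and even $n=2m$ with $F$ homogeneous) matches the paper's reasoning exactly. The genuine divergence is in the residual case of a non-homogeneous $2m$-gon of vertex type $(m,m)$ with $m\ge4$ even. There the paper proceeds globally: it invokes Lemma~\ref{le.hom_pol-eu1} (the Euler-formula computation) to force $m=4$, identifies $P$ as combinatorially a truncated cube with six octagonal and eight triangular faces, and then uses a specific isometry $\psi$ fixing a triangular face to show that the octagonal faces must have edges alternating in length, hence be homogeneous---a contradiction. Your route is purely local: the angle-budget inequality $2\pi/m<\pi/3$ kills $m\ge6$ without Euler's formula, and for $m=4$ the edge-matching at a single vertex (the regular triangle forcing two incident edges to have equal length $t$, which must simultaneously equal $l_1$ from the class-$1$ copy and $l_2$ or $l_3$ from the class-$2$ copy) yields the contradiction directly.

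Your argument is more elementary and self-contained, avoiding both the Euler computation and the global truncated-cube picture; the paper's argument, while longer, has the side benefit of explicitly identifying the obstructing combinatorial type. The terse edge-matching sketch in your last sentence is correct but would benefit from one more line making the contradiction explicit: with the two $F$-copies sharing the edge of length $l_2$ (the only common value of the two vertex classes), the remaining two edges at $A$ carry lengths $l_1$ and $l_2$ (or $l_3$), yet both must equal the triangle's side $t$, forcing $l_1=l_2$ or $l_1=l_3$.
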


\begin{proof}
If $F$ is a homogeneous $n$-gon (in particular, a regular $n$-gon), $n\geq 4$, then it can be a face of right prism over this $n$-gon itself,
see Fig.\,\ref{Fig3}~a).
\smallskip
Now, we suppose that $F$ is a non-homogeneous (inscribed) $n$-gon, $n \geq 4$.
Since, the impact of $F$ should be $<2\pi$, then
Proposition \ref{pr.idex.2} implies that $n=2m$, $m\in \mathbb{N}$, $m\geq 2$, and the vertex type of $F$ is $(m,m)$.
By Lemma \ref{le_m.m}, we get that $m$ is even. Moreover, for $m=2$, we see that $F$ is an isosceles trapezoid.
Such an $F$ is indeed a face of a suitable polyhedron, see Example \ref{ex.isotrap.1}.

On the other hand, if $m \geq 4$, then
$2\pi\left(1 -\frac{1}{m}\right)\geq 3\pi/2$.
Therefore, any vertex of $P$ has exactly three adjacent faces: two faces  that are isometric to $F$ and the third one that is a regular triangle
(by assertion 2) of
Proposition \ref{pr.idex.2}).
Now, Lemma \ref{le.hom_pol-eu1} implies that $P$ has $6$ $n$-gonal and $8$ triangular faces, $36$ edges, and $24$ vertices, while $m=4$ and $n=8$.
We obtain a polyhedron that is combinatorially equivalent to a partial truncation of the cube, see Example \ref{ex.par_trun1}.
Let us fix a vertex $A$ of $P$, which is adjacent to the pair of $8$-gonal faces $F_1$, $F_2$, and to a triangular face $F_3$.
Let us consider also a vertex $B$ of $P$, such that
$[A,B]$ is the common edge of $F_1$ and $F_3$. Since $P$ is homogeneous, there is an isometry $\psi$ of $P$ such that $\psi(B)=A$.
Since $F_3$ is a unique triangular face adjacent to $A$ and to $B$, then $\psi(F_3)=F_3$ and $\psi(B)=A$, $\psi(A) \in F_3$.

Since $\psi$ acts on $F_3$ as an isometry, there are two variants: (1) $\psi$ rotates $F_3$ to the angle $2\pi/3$,
(2) the restriction of $\psi$ on $F_3$ is the reflection
with respect to a straight line orthogonal to $[A,B]$.
In both these variants, $\psi$ moves the edge of $F_1$ adjacent to $B$ and distinct from $[A,B]$
to the edge of $F_1$ adjacent to $A$ and distinct from $[A,B]$.

This means that two edges of $F_1$ that are adjacent to $[A,B]$ have equal lengths.
This argument can be applied to any vertex of $P$.
Therefore, all edges of $F_1$ (hence, of any octagonal face of $P$) have
sides that alternate in length.
This means that $F_1$ (being inscribed) is a homogeneous polygon. We get a contradiction with our assumption that $F$ (isometric to $F_1$) is non-homogeneous,
that proves the corollary.
\end{proof}
\medskip

Let us discuss the case when $F$ is a triangle.
If it is an isosceles (a regular, in particular) triangle, then $F$ is isometric to a face of suitable right antiprism, see Fig.\,\ref{Fig3}~b).

Let us consider a triangle $F$ with pairwise distinct side lengths $0<a < b < c$ that is isometric to a face of a homogeneous polytope $P$.
Then any vertex $A$ of $P$ has exactly three adjacent faces isometric to $F$, moreover, $A$ corresponds to pairwise distinct vertices of $F$
in these three faces.

If there are no other faces adjacent to $A$, then we get a homogeneous tetrahedron as in Example \ref{ex.tetra.1}.
If $\alpha, \beta, \gamma$ are angles opposite to the sides of the triangle with lengths $a < b < c$, then
$\pi-\gamma =\alpha+\beta> \gamma$, hence, $\gamma <\pi/2$,
this is equivalent to the inequality $c^2<a^2+b^2$. Hence, $F$ is an acute triangle, and every acute triangle is a face of a suitable homogeneous tetrahedron.

If there is some other face adjacent to $A$, then it is isometric either to a regular polygon or to a non-regular homogeneous $n$-gon, where $n=2m$, $m\geq 2$,
by Corollary~\ref{co.face.hom.1}.

Suppose that any vertex of $P$ is adjacent to 4 faces ($3$ triangular and $1$ $n$-gonal)
Now, Lemma \ref{le.hom_pol-eu1} implies that $P$ has $2$ $n$-gonal and $2n$ triangular faces, $4n$ edges, and $2n$ vertices.
We obtain a polyhedron that is combinatorially equivalent to a right antiprism over a homogeneous $n$-gon.
If $n=3$, we get a homogeneous octahedron.

Suppose that any vertex of $P$ is adjacent to more than 4 faces. It is easy to see that it should be exactly 5 faces:
either all of them are triangles (hence, we obtain a polyhedron that is combinatorially equivalent to the icosahedron)
or four triangular faces and one rectangular face.
In the latter case, Lemma \ref{le.hom_pol-eu1} implies that $P$ has $6$ rectangular faces and $32$ triangular faces, $60$ edges, and $24$ vertices.
Hence, we get a polyhedron that is combinatorially equivalent to the snub cube.
\smallskip

\begin{remark}
Any homogeneous octahedron can be realized as a right antiprism over a regular triangle.
Hence, we have a $1$-parameter family of homogeneous octahedra (up to a similarity).
A homogeneous icosahedron should be invariant under some subgroup of the full isometry group of the regular icosahedron, see polyhedra $J$, $K$, $R$ of Fig.~3 on P.~9 in
\cite{RobCar}. Finally, there is a $2$-parameter
family of homogeneous polyhedra that are combinatorially equivalent to the snub cube, see e.~g. Fig.~3 on P.~84 in \cite{RobCarMor}.
\end{remark}

\begin{problem}\label{prob.triangle}
Classify all triangles that are isometric to some faces of homogeneous polyhedra.
\end{problem}

Possible approaches for this problem could be found in  \cite{GrShe81}, \cite{KoKo17}, and \cite{RobCar}.
Note that we have the examples of obtuse triangles with pairwise different side lengths,
that are isometric to faces of some homogeneous icosahedrons (see Proposition \ref{pr.homicos.1}).

\smallskip

\begin{prop}\label{pr.four.edges}
The edges of a homogeneous polyhedron $P$ in $\mathbb{R}^3$ may have at most three different lengths.
\end{prop}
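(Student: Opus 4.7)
By homogeneity of $P$, every edge of $P$ lies in the $\Isom(P)$-orbit of some edge incident to a fixed vertex $A$, so it will suffice to show that the edges at $A$ realize at most three distinct lengths. I would write the cyclic sequence of faces around $A$ as $F_1,\dots,F_d$ with intermediate edges $e_1,\dots,e_d$ (with $F_i$ lying between $e_i$ and $e_{i+1}$, indices mod $d$); the identity that $e_{i+1}$ is a common side of $F_i$ and $F_{i+1}$ forces the pair $\{\ell(e_i),\ell(e_{i+1})\}$ to coincide with the pair of sides of $F_i$ that meet at the vertex~$A$.

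By Corollary~\ref{co.face.hom.1}, each $F_i$ is one of: a regular $n$-gon (both sides at $A$ equal); a non-regular homogeneous $2m$-gon (the pair is the unique alternating $\{p,q\}$); an isosceles trapezoid (the pair is $\{b,c\}$ or $\{b',c\}$, a leg with one of the two bases); or a non-regular triangle (the pair is two of its three sides). Proposition~\ref{pr.idex.1} dictates which vertex classes of each face type occur at $A$ and with what multiplicities. The impact inequality from Proposition~\ref{pr.idex.2}---that the total impact of the face types at $A$ is strictly less than $2\pi$---then leaves only a finite list of admissible combinations of non-regular face types at $A$: at most one non-regular triangle type (producing exactly three corners at $A$), at most one non-regular trapezoid type (producing exactly two corners), and a sharply constrained collection of homogeneous $2m$-gon types.

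For each admissible combination I would solve the cyclic system of pair conditions and verify directly that $\{\ell(e_1),\dots,\ell(e_d)\}$ has at most three elements. Purely regular vertices produce a single length; a single homogeneous $2m$-gon type produces at most two; an isosceles trapezoid with its regular neighbours produces at most the three lengths $\{b,b',c\}$; and a scalene triangle produces at most its three side lengths $\{a,b,c\}$, the intervening regular faces only forcing those values to recur. The hardest part will be the mixed cases in which a non-regular triangle or trapezoid is combined with further non-regular faces at $A$: for instance, three scalene-triangle corners at $A$ already exhaust the three lengths $a,b,c$, and the cyclic side-sharing then forces the two sides at $A$ of any additional non-regular face to both lie in $\{a,b,c\}$ and in fact to coincide, collapsing that face into a regular polygon or into an excluded case (as when a rectangle is forced to become a square, contradicting its non-regularity). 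An analogous rigidity argument, carried out case by case through the finite list of admissible configurations, keeps the number of distinct edge lengths at $A$ at most three in every instance.
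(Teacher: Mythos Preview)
Your approach is a direct case enumeration, whereas the paper argues by contradiction in one stroke: assuming $u\ge 4$ distinct edge lengths at a vertex $A$, the cyclic sequence of edge lengths around $A$ must change value at least $u$ times, so there are at least $u\ge 4$ faces at $A$ whose two sides at $A$ have different lengths. The paper then checks, using the impact estimates of Proposition~\ref{pr.idex.2}, that any four such ``transition'' faces already contribute total angle at least $2\pi$ at $A$ (a triangle or trapezoid gives impact $\pi$, and any remaining transition face is a homogeneous $2m$-gon with corner angle $\ge\pi/2$), which is impossible. No enumeration of configurations is needed.

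Your sketch, by contrast, has a genuine gap in the mixed case. You assert that once three scalene-triangle corners are present at $A$, ``cyclic side-sharing forces the two sides at $A$ of any additional non-regular face to both lie in $\{a,b,c\}$ and in fact to coincide.'' Neither claim is justified. If an additional non-regular face $F$ sits between two edges $e_i,e_{i+1}$ only one of which is shared with a triangle corner, then only $\ell(e_i)$ is forced into $\{a,b,c\}$; nothing in your argument prevents $\ell(e_{i+1})$ from being a new length $d$. (Concretely: with edges $e_1,\dots,e_5$, triangles at positions $1,2,3$, and two homogeneous $2m$-gons at positions $4,5$, one can write down a consistent cyclic length pattern $c,a,b,c,d$ with four distinct values.) What actually kills such configurations is precisely the angle count: the triangle impact $\pi$ together with two non-regular homogeneous $2m$-gon corners of angle $\pi(1-1/m)\ge\pi/2$ already reaches $2\pi$. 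That is the paper's argument, and it is the missing step in yours. Even in the cases where your conclusion is correct, the ``coincide'' claim is false (an additional rectangle with sides $a,b$ at $A$ is perfectly admissible and its sides do not coincide); you are confusing ``at most three lengths'' with ``the extra face must be regular.''
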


\begin{proof}
Let us suppose that there are $u$ different edge lengths for $P$, where $u \geq 4$.
Let us fix a  vertex $A$ of $P$. Denote by $F_i$, $i=1,\dots,v$, all faces of $P$
that contain $A$ and such that two sides of $F_i$, adjacent to $A$, have different lengths.
Obviously, $v\geq u \geq 4$.

Suppose that one of these faces (say, $F_1$) is triangular. Its impact is $\pi$ by 2) of Proposition  \ref{pr.idex.2}.
Moreover, there are two faces (say, $F_2$ and $F_3$) with sides
adjacent to $A$ whose lengths are different from the lengths of the sides of $F_1$ (since $u \geq 4$). The impacts of $F_2$ and $F_3$ should be
$< \pi$, therefore, each of these faces is isometric to a homogeneous $n$-gon, where $n=2m$ and $m\geq 2$ by Proposition \ref{pr.idex.2}.
Each angle of this $n$-gon is equal to $\pi(n-2)/n=\pi (m-1)/m \geq \pi/2$. Hence, the sum of angles of $F_2$ and $F_3$ at the vertex $A$ is at least
$\pi$, that is impossible.

The same arguments can be applied to the case when $F_1$ is an isosceles trapezoid or
a non-homogeneous $n$-gon with the vertex type $(m,m)$, where $n=2m$, and $m\geq 3$
(the impact of any such polygon is at least $\pi$ and its sides have  $2$ or $3$ types of length, see Lemma \ref{le_m.m}).

If every face $F_i$, $i=1,\dots,v$, is not a triangle, an isosceles trapezoid, or
a non-homogeneous $n$-gon, where $n=2m$ and $m\geq 3$,
then $F_i$ is a homogeneous non-regular $n$-gon, where $n=2m$ and $m\geq 2$ by Proposition  \ref{pr.idex.2}.
Each angle of $F_i$ is at least $\pi \left(1 -\frac{1}{m}\right)\geq \pi/2$.
Hence, the sum of angles  of $F_i$ at the vertex $A$, $i=1,\dots,v$,  is $\geq v\cdot \pi/2 \geq 2\pi$, that is impossible.
This contradiction proves the proposition.
\end{proof}
\smallskip

Since the orbits of the groups $D_m$ and $C_m$ in $\mathbb{R}^2$ are homogeneous polygons, then Proposition \ref{pr.constr.1} give us examples of $3$-dimensional
homogeneous polyhedra that are right prisms over regular polygons and over polygons $R_m(\alpha,\beta)$ (see Example \ref{ex.2dim.1}).

For homogeneous tetrahedra with non-regular faces see Example \ref{ex.tetra.1} and
\cite{Edmonds05, Edmonds09} for more details on homogeneous simplices.

\begin{prop}\label{pr.antipr.1}
Let $P$ be a homogeneous polyhedron in $\mathbb{R}^3$ with faces of two types: 1) regular $n$-gons with side length $d$ and $n\geq 4$, 2)
non-regular triangles with side lengths $a,b,c$.
Then the triangular faces are isosceles (say, $a=b$), there are exactly $2$ regular $n$-gonal faces ($d=c$) and $P$ is a right antiprism.
In particular, triangular faces can be right or obtuse isosceles triangles.
\end{prop}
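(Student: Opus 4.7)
The strategy is a three-step reduction: fix the vertex figure by an impact count, read off the Euler data via Lemma \ref{le.hom_pol-eu1}, and finally upgrade the combinatorial antiprism to a geometric right antiprism by exploiting homogeneity together with the inscribed-sphere property.

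For the vertex figure, I would apply Proposition \ref{pr.idex.2}: any non-regular triangular face has impact exactly $\pi$ at every vertex of $P$, while each adjacent regular $n$-gon contributes $\pi(1-2/n)$. Writing $l$ for the multiplicity factor of the triangle from Proposition \ref{pr.idex.1} and $q$ for the number of $n$-gons incident at a vertex, the strict convexity inequality $l\pi + q\pi(1-2/n) < 2\pi$ together with $n \geq 4$ forces $l = q = 1$. So every vertex of $P$ is incident to exactly one regular $n$-gonal face and exactly three non-regular triangular faces. Then Lemma \ref{le.hom_pol-eu1}, with $k = 3$, $l = n$, $t = 3$, $s = 1$, yields $\gamma = n$ and hence $\mathcal{F}_2 = 2$, $\mathcal{F}_1 = 2n$, $\mathcal{E} = 4n$, $\mathcal{V} = 2n$. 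Thus $P$ has exactly two $n$-gonal faces together with $2n$ triangular lateral faces, matching the combinatorial structure of an $n$-antiprism with the two $n$-gons as bases.

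Next I would extract the geometry. Each triangle shares one side with an $n$-gon, so that side has length $d$, forcing $d$ to equal one of $a,b,c$; label it $c$. By Proposition \ref{pr.efhs}, $P$ is inscribed in a sphere centered at its centroid $O$, and each of the two $n$-gonal faces is inscribed in a circle whose center $O_i$ lies on the perpendicular from $O$ to that face's plane. Homogeneity supplies an isometry $\psi \in \Isom(P)$ swapping the two $n$-gons (they are the only $n$-gonal faces, hence permuted transitively by $\Isom(P)$); this $\psi$ exchanges $O_1 \leftrightarrow O_2$ and fixes $O$, so $O$ is the midpoint of $[O_1, O_2]$ and the line $O_1 O_2$ is perpendicular to both base planes. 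Hence the bases are parallel regular $n$-gons, congruent to each other and lying symmetrically about $O$.

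The remaining task, which I expect to be the main obstacle, is to conclude that the triangular lateral faces are isosceles ($a = b$) and that the twist between the two parallel bases equals $\pi/n$. The sides $a, b$ of the triangle are the two lateral edges at any top-base vertex $A$. The plan is to produce inside $\Isom(P)$ a reflection through the plane spanned by the axis $O_1 O_2$ and $A$: such a reflection fixes $A$ and the top base, swaps the two top neighbors of $A$ on the $n$-gon, and must correspondingly swap the two bottom vertices adjacent to $A$, forcing equality of the two lateral edges at $A$. The delicate point is that homogeneity only provides vertex-transitivity of $\Isom(P)$; to produce this vertex-centered reflection one must argue, using the uniqueness of the $n$-gon at each vertex together with the combinatorial symmetry of the three triangles at $A$ relative to the line from $A$ to the base's center, that the isotropy subgroup $\I(A)$ must contain it. Once $a = b$ holds at every base vertex, the twist between the two parallel bases is forced to equal $\pi/n$, realizing $P$ as the standard right $n$-antiprism; the final assertion that the lateral isosceles triangles can be right or obtuse is then obtained by choosing the height of the antiprism appropriately relative to the base radius.
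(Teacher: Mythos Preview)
Your first four steps (impact count forcing the vertex figure, the Euler computation via Lemma \ref{le.hom_pol-eu1}, identifying $d=c$, and showing the two $n$-gons are congruent parallel bases centered on a common axis through $O$) agree with the paper's argument.

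The gap is in your final step. You propose to produce a reflection in the isotropy group $\I(A)$ through the plane spanned by the axis and $A$, and you correctly flag this as delicate --- but the justification you sketch (``combinatorial symmetry of the three triangles at $A$'') does not work. If the triangular faces were genuinely scalene, the three triangles meeting at $A$ would show $A$ at three \emph{inequivalent} vertex types of the triangle (this is exactly the $(1,1,1)$ case of Proposition \ref{pr.idex.1}), so there is no local reflective symmetry to exploit; a priori $\Isom(P)$ could have order exactly $2n$ and $\I(A)$ could be trivial. You are trying to extract from homogeneity alone an isotropy element that homogeneity does not supply.

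The paper avoids this by arguing through a layer-swapping isometry rather than an isotropy element. Assuming $a\neq b$, it picks any $\varphi\in\Isom(P)$ with $\varphi(Y_1)=X_1$; using $a\neq b$ and a distance comparison it forces $\varphi(X_1)=Y_1$, hence $\varphi^2=\Id$. Then, whichever of the two possible ways $\varphi$ maps the cycle $Y_1\ldots Y_n$ onto $X_1\ldots X_n$, the midpoints $Z_i$ of $[Y_i,\varphi(Y_i)]$ all lie in the mid-plane $L$ between the bases and are not collinear through $O$; since $\varphi$ fixes each $Z_i$ and $O$, it is the identity on $L$ and hence the reflection in $L$. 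But that reflection sends $X_1$ to its orthogonal projection $X_1'$ on the lower base plane, not to $Y_1$, contradicting $\varphi(X_1)=Y_1$ (one checks $X_1'\neq Y_1$ because otherwise the lateral faces would fail to be triangles). This contradiction yields $a=b$ and the twist $\pi/n$. The key difference from your plan is that the reflection the paper ultimately pins down is in the \emph{mid-plane}, not in a plane through the axis, and it is obtained by analyzing a base-swapping isometry guaranteed by transitivity rather than by postulating a nontrivial stabilizer.
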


\begin{proof}
By Proposition \ref{pr.idex.2}, we see that every vertex of $P$ is adjacent to one $n$-gonal face and to $3$ triangular faces.
Using Euler's formula, we easily get that there are exactly $2$ regular $n$-gonal faces and $2n$ triangular faces.
Indeed, in the notation of Lemma~\ref{le.hom_pol-eu1}, we get $k=n$, $t=1$, $l=s=3$, $\gamma=1/n$, hence (as it is easy to see) $\mathcal{F}_1=2$ and $\mathcal{F}_2=2n$.

We denote these two $n$-gonal faces by $X_1X_2\dots X_n$ and $Y_1Y_2\dots Y_n$.
Let $O_1$ and $O_2$ be the respective centers of these faces, while $O$ is the center of $P$
(since $P$ is inscribed). If $\I(P)$ is the isometry group of $P$ and $\psi \in \I(P)$, then either
$\psi(X_1X_2\dots X_n)=X_1X_2\dots X_n$ and $\psi(Y_1Y_2\dots Y_n)=Y_1Y_2\dots Y_n$
or
$\psi(X_1X_2\dots X_n)=Y_1Y_2\dots Y_n$ and $\psi(Y_1Y_2\dots Y_n)=X_1X_2\dots X_n$ (because we have only two such faces).

For any $i=2\dots n$ there is an $\psi \in \I(P)$ such that $\psi(X_1)=X_i$. It is clear that $\psi(X_1X_2\dots X_n)=X_1X_2\dots X_n$.
Therefore, a subgroup of $\I(P)$ of index $2$ acts transitively on $X_1X_2\dots X_n$ (as well as on $Y_1Y_2\dots Y_n$). This implies that
$O$ is the center of the segment $[O_1,O_2]$ and the faces $X_1X_2\dots X_n$ and $Y_1Y_2\dots Y_n$ are orthogonal to $[O_1,O_2]$.

Let $L_1$ and $L_2$ be planes in $\mathbb{R}^3$, containing the faces $X_1X_2\cdots X_n$ and $Y_1Y_2\dots Y_n$ respectively, and
$L$ be the plane through $O$, that is parallel to $L_1$ and $L_2$. Let us consider $X'_1X'_2\dots X'_n$, the orthogonal projection of
$X_1X_2\dots X_n$ to $L_2$. If $C$ is the circumscribed circle for $Y_1Y_2\dots Y_n$ in $L_2$, then $C$
is also the circumscribed circle for $X'_1X'_2\dots X'_n$. Without loss of generality, we may assume that $X_i'$ is situated in the smallest arc of $C$
between $Y_i$ and $Y_{i+1}$, $i=1,\dots,n-1$. Indeed, the case $X_1'=Y_j$ for some $j$ is impossible, because we have only triangular faces that are
distinct from regular $n$-gons.
We may suppose that $a=d(X_1,Y_1)$ and $b=d(X_1,Y_2)$.

We are going to prove that $d(X_1',Y_1)=d(X_1',Y_2)$. This equality implies that all triangular faces are equilateral and $P$ is a right antiprism.
Let us suppose that $d(X_1',Y_1)\neq d(X_1',Y_2)$, which is equivalent to $a\neq b$.

There is $\varphi \in \I(P)$ such that $\varphi(Y_1)=X_1$. It is easy to see that $\varphi(X_1)=Y_1$. Indeed,
$\varphi(X_1)\in Y_1Y_2\dots Y_n$,
$$
d(\varphi(X_1),X_1)=d(\varphi(X_1),\varphi(Y_1))=d(X_1,Y_1)=a\neq b=d(X_1,Y_2),
$$
hence, $\varphi(X_1)\neq Y_2$.
For any $i \geq 3$, we have $d(X_1,Y_i)> \max\{a, b\}$, therefore, $d(X_1,Y_i)=a$ only for $i=1$ and  $\varphi(X_1)=Y_1$.

Further, since $\varphi(X_1X_2\dots X_n)=Y_1Y_2\dots Y_n$ and $\varphi(Y_1Y_2\dots Y_n)=X_1X_2\dots X_n$, then $\varphi(L)=L$, $\varphi(O_1)=O_2$, and
$\varphi(O_2)=O_1$
Therefore, $\varphi^2$ is the identity on the straight line $O_1O_2$, $\varphi^2(L)=L$,  $\varphi^2(X_1)=X_1$, $\varphi^2(Y_1)=Y_1$,
hence, $\varphi^2=\Id$.

Keeping in mind that $\varphi(Y_1)=X_1$ and $\varphi(Y_1Y_2\dots Y_n)=X_1X_2\dots X_n$,
we have two cases: either 1) $\varphi(Y_i)=X_i$ for all $i$,  or 2) $\varphi(Y_i)=X_{n+2-i}$ for $i=2,\dots,n$.
Since $\varphi^2=\Id$, we get that $\varphi(Z_i)=Z_i$, where $Z_i$ is the midpoint of the segment $[Y_i, \varphi(Y_i)]$, $i=1,\dots,n$.
It is clear that $Z_i\in L$ for all $i$ and the set $\{Z_i\,|\, 1\leq i \leq n\}$ is not on a straight line through the point $O$.
This observation implies that $\varphi$ is the identity on $L$. Consequently, $\varphi$ is the reflection with respect to $L$ (recall that $\varphi(O_1)=O_2$).
But this is impossible because $X_1' \neq Y_1 = \varphi(X_1)$. This contradiction
proves that $d(X_1',Y_1)=d(X_1',Y_2)$, hence, $a=b$, and all faces distinct from two regular $n$-gon are isosceles triangles.
We have proved that $P$ is a right antiprism. It should be noted also that the angle between two sides of equal lengths of any triangular face can be right or obtuse.
\end{proof}

\begin{remark}
The assertion of Proposition \ref{pr.antipr.1} is valid also if we
assume in addition that there are exactly four faces adjacent to any vertex of $P$ and replace assumptions 1) to the following one: the regular triangle with side length $d$.
\end{remark}

This remark implies the following proposition.

\begin{prop}\label{pr.antipr.2}
Let $P$ be a homogeneous polyhedron in $\mathbb{R}^3$
that is combinatorially equivalent to the regular octahedron.
Then $P$ is isometric to some right antiprism with regular triangular bases.
\end{prop}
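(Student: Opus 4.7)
The plan is to verify the hypotheses of the remark following Proposition \ref{pr.antipr.1}, or else to recognize $P$ as the regular octahedron; either way $P$ is a right antiprism with regular triangular bases. From combinatorial equivalence with the regular octahedron, $P$ has $6$ vertices, $8$ triangular faces, $12$ edges, and exactly $4$ faces meet at every vertex; by Proposition \ref{pr.efhs}, $P$ is inscribed in a sphere.

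For each congruence class $F$ of the triangular faces of $P$, let $f_F$ be the number of $F$-faces adjacent to a given vertex (this count is independent of the vertex by homogeneity). Applying Proposition \ref{pr.idex.1}, I get $f_F = l_F$ for some $l_F \in \mathbb{N}$ when $F$ is equilateral (vertex type $(3)$, $k = 3$), while $f_F = 3 l_F$ when $F$ is non-equilateral (vertex type $(2,1)$ or $(1,1,1)$, $k = 1$). Since $\sum_F f_F = 4$ and each non-equilateral contribution is a positive multiple of $3$, only two possibilities remain: (i) only equilateral congruence classes appear at each vertex; or (ii) exactly one equilateral class with $l = 1$ together with exactly one non-equilateral class with $l = 1$ appears at each vertex.

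Next I will collapse case (i) to the regular octahedron. If two distinct equilateral congruence classes appeared, then in any cyclic ordering of the four adjacent faces involving both classes some two consecutive faces would belong to different classes; but consecutive faces share an edge, and two equilateral triangles sharing an edge must have equal side lengths, contradicting that they lie in different congruence classes. Therefore a single equilateral class with $l_F = 4$ fills every vertex: $P$ has $8$ congruent equilateral triangles meeting four to a vertex, which forces $P$ to coincide with the regular octahedron, itself a right antiprism with equilateral triangular bases and equilateral lateral faces.

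Finally, in case (ii) I count: with $l = 1$ for each face type, $P$ has $2$ equilateral and $6$ non-equilateral faces (via $6 \cdot f_F / 3$). By homogeneity the $2 \cdot 3 = 6$ equilateral-to-non-equilateral edges are distributed equally among the $6$ non-equilateral faces, so each non-equilateral face has exactly one edge of length equal to the common side $d$ of the equilateral faces. This matches the hypothesis of the remark following Proposition \ref{pr.antipr.1}, which immediately yields that $P$ is a right antiprism with regular triangular bases. The main obstacle is the cyclic-adjacency step in case (i); the rest is routine counting together with a direct invocation of the remark.
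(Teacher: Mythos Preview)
Your proof is correct and follows essentially the same approach as the paper: split into the all-equilateral case (yielding the regular octahedron) versus the mixed case, and in the latter invoke the remark after Proposition~\ref{pr.antipr.1}. Your case split via the vertex-type counting of Proposition~\ref{pr.idex.1} reaches the same conclusion as the paper's angle-impact argument via item~2) of Proposition~\ref{pr.idex.2}; note that your edge-distribution sentence in case~(ii) is unnecessary (and its ``by homogeneity'' is underjustified), since the remark's hypothesis requires only the two face types and four faces per vertex, both of which you have already established.
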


\begin{proof}
All faces of $P$ are triangular and there are exactly four faces adjacent to any vertex of $P$.
If all faces are regular, then $P$ is a regular octahedron, that is partial case of right antiprisms with regular triangular bases.
It is not difficult to check that if there are at least two different edge lengths, then there are at least two unequal
triangles which include a given vertex of $P$. Then by point 2) from Proposition \ref{pr.idex.2},
one of these two triangles is regular.
Now, it suffices to apply arguments from the proof of Proposition \ref{pr.antipr.1}.
\end{proof}

\begin{figure}[t]
\begin{minipage}[h]{0.45\textwidth}
\center{\includegraphics[width=0.85\textwidth, trim=0in 0in 0mm 0mm, clip]{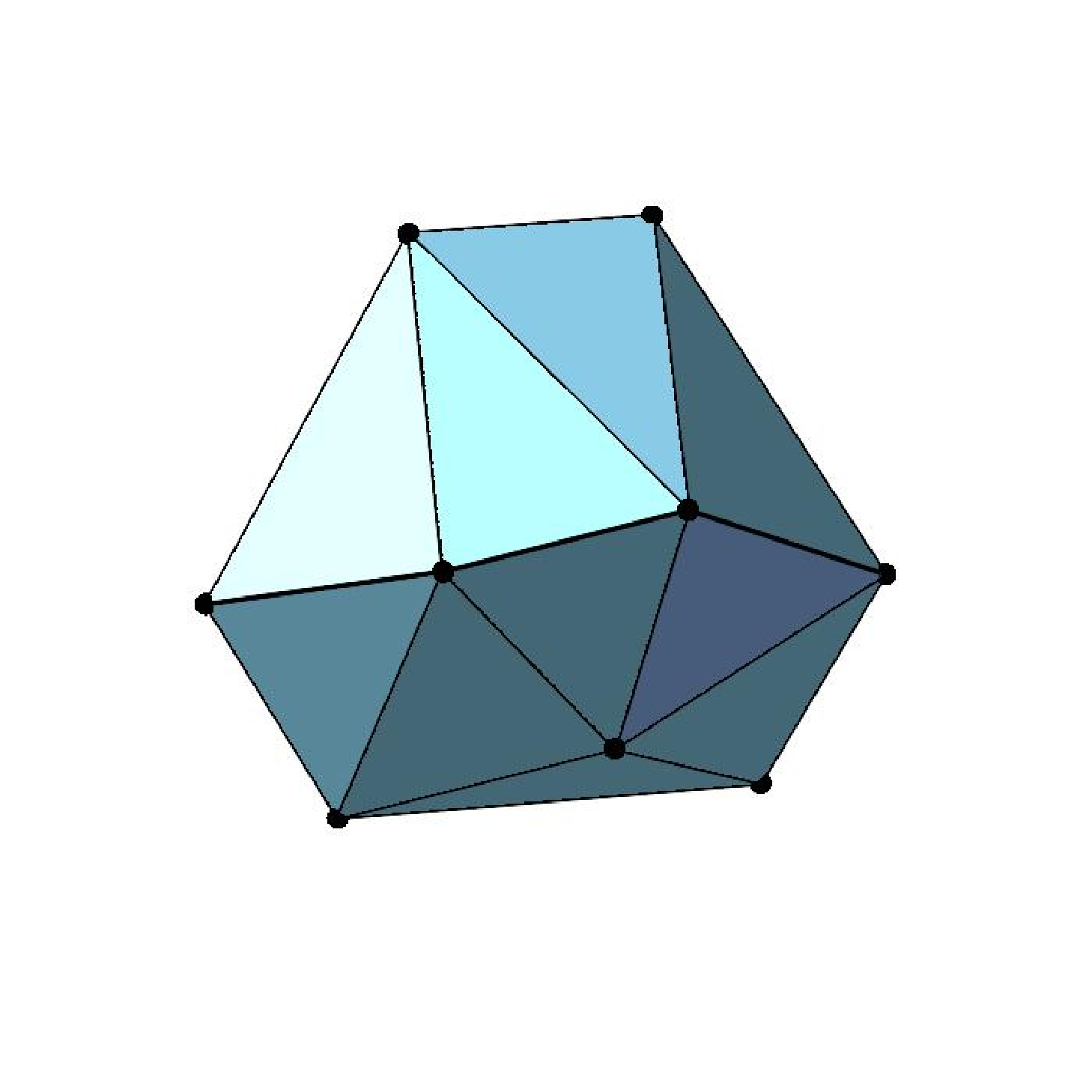} \\ a)}
\end{minipage}
\begin{minipage}[h]{0.45\textwidth}
\center{\includegraphics[width=0.9\textwidth, trim=0in 0in 0mm 0mm, clip]{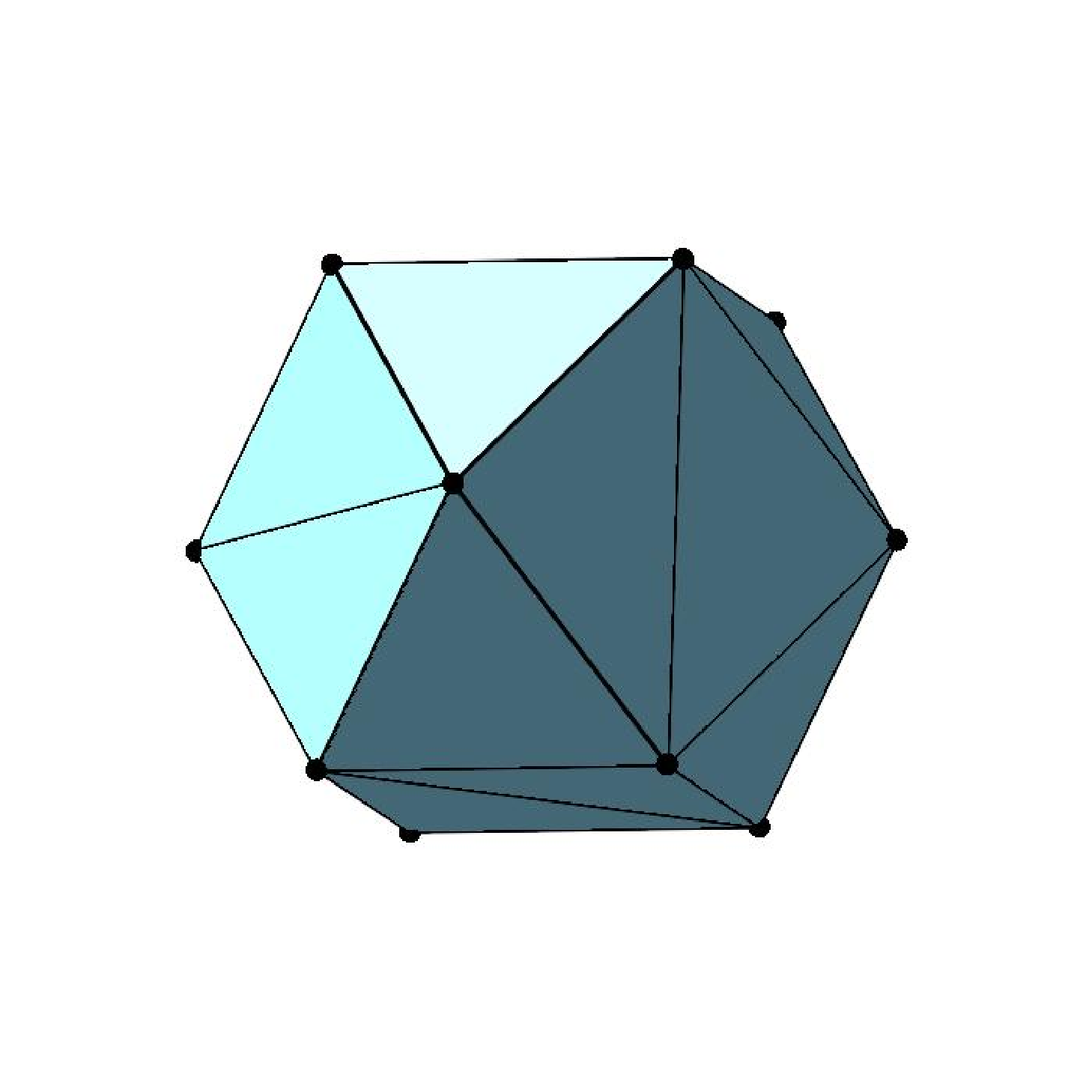} \\ b)}
\end{minipage}
\caption{{{\normalsize{Homogeneous icosahedra:}}
\normalsize{a) for $(\varepsilon, \alpha,\beta)=(1,7/2,4)$;
b)~for $(\varepsilon, \alpha,\beta)=(1,999,1000)$.}}}
\label{Fig7}
\end{figure}

\begin{prop}\label{pr.homicos.1}
There is a 3-parameter family of (pairwise non-isometric) homogeneous icosahedra $P$ in $\mathbb{R}^3$ (it has 8 regular triangular faces of 2 different side lengths
and 12 pairwise isometric non-regular triangular faces).
For some of these polyhedra, non-regular triangular faces are obtuse triangles with pairwise distinct side lengths.
\end{prop}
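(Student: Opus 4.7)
The plan is to realize an explicit $3$-parameter family of homogeneous icosahedra as orbits of the tetrahedral rotation group $T\subset SO(3)$, which consists of the cyclic permutations of coordinates combined with sign changes involving an even number of minus signs, so $|T|=12$. For $(a,b,c)\in\mathbb{R}_{>0}^{3}$ with $a,b,c$ pairwise distinct, the $T$-orbit $V(a,b,c)$ of $(a,b,c)$ has $12$ elements, and $P(a,b,c):=\conv V(a,b,c)$ is automatically homogeneous since $T\subset\Symm(P)$ acts transitively on its vertex set. The special choice $(a,b,c)=(1,\phi,0)$, with $\phi=(1+\sqrt{5})/2$, recovers the vertex set of the regular icosahedron, so by continuous dependence of the boundary complex of a convex polytope on its vertex positions, $P(a,b,c)$ retains the combinatorial type of the icosahedron on a $3$-dimensional open neighborhood of $(1,\phi,0)$ in $\mathbb{R}^{3}$; this open set will be our $3$-parameter family.

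Within this neighborhood I describe the $20$ triangular faces. Each of the four $3$-fold axes of $T$ supplies two regular triangular faces, namely the supporting planes perpendicular to the axis at the extremal values of the axial coordinate; the three vertices of such a face constitute a single $3$-orbit of the axial rotation and therefore form an equilateral triangle. This yields $8$ regular triangles organized in two $T$-orbits of size~$4$, with respective side lengths
\[
d_{1}=\sqrt{(a-b)^{2}+(b-c)^{2}+(c-a)^{2}}, \qquad d_{2}=\sqrt{(a-b)^{2}+(b+c)^{2}+(a+c)^{2}},
\]
and the identity $d_{2}^{2}-d_{1}^{2}=4c(a+b)>0$ whenever $c>0$ shows they are genuinely distinct. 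The remaining $12$ triangular faces form a single $T$-orbit --- the stabilizer in $T$ of any non-regular triangle is trivial for generic $(a,b,c)$, because $T$ acts freely on vertices and a stabilizer injects into $S_{3}$ with trivial kernel --- and hence are pairwise isometric. A concrete representative at the vertex $(a,b,c)$ is
\[
\Delta=\bigl\{(a,b,c),\,(b,c,a),\,(b,-c,-a)\bigr\},
\]
with side lengths $p=d_{1}$, $q=d_{2}$ and $r=2\sqrt{a^{2}+c^{2}}$.

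To locate an obtuse example with pairwise distinct sides inside this family, I restrict to the one-parameter slice $(a,b,c)=(1,\phi,t)$. A short computation gives $q^{2}-p^{2}-r^{2}=-4\bigl(t^{2}-\phi^{2}t+1\bigr)$, so $\Delta$ is obtuse at the angle opposite $q$ on the open interval $(t_{1},t_{2})$ with $t_{1,2}=(\phi^{2}\mp\sqrt{3\phi-2})/2\approx(0{.}464,\,2{.}154)$. Taking for instance $t=1/2$ I verify $p^{2}\approx 1{.}882$, $q^{2}\approx 7{.}118$, $r^{2}=5$, so $q^{2}-p^{2}-r^{2}\approx 0{.}236>0$; the three sides of $\Delta$ are pairwise distinct, as required.

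Finally, the invariants $(d_{1},d_{2},r)$ depend on $(a,b,c)$ through a smooth map whose Jacobian is non-degenerate at $(1,\phi,0)$, so distinct parameters give pairwise non-isometric polytopes and the family has genuine real dimension three. The main obstacle I anticipate is the first step --- verifying that the icosahedral combinatorial type, and in particular the property that the $12$ candidate non-regular triangles are actual facets, persists on a full $3$-dimensional open parameter set rather than on lower-dimensional slices alone --- which I handle cleanly by the continuity argument around the regular icosahedron parameter, thereby bypassing the tedious case-by-case supporting-plane analysis that a more direct verification would entail.
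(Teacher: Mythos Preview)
Your construction is the same as the paper's: both take the orbit of a generic point under the tetrahedral rotation group $T$ and identify the same three edge lengths. Your parameters $(a,b,c)$ correspond to the paper's $(\alpha,\beta,\varepsilon)$, and your $d_1,d_2,r$ are the paper's $d_2,d_3,d_1$. Your Jacobian argument for pairwise non-isometry is actually more explicit than anything the paper writes down.

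The substantive difference is how the icosahedral combinatorial type is established. The paper argues directly: since $\beta$ is the largest coordinate, every orbit point lies on a face of the cube $[-\beta,\beta]^3$ and is therefore extreme; the eight regular faces are then cut out by the explicit planes $\pm x\pm y\pm z=\beta+\alpha+\varepsilon$ and $\beta-\alpha-\varepsilon$, and the remaining twelve triangles fill in. This works on the entire region $0\le\varepsilon<\alpha<\beta$. Your continuity argument is cleaner, but it hands you only an unspecified open neighbourhood of $(1,\phi,0)$ with no quantitative lower bound on its size.

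That is exactly where your proof breaks. On the slice $(1,\phi,t)$ your own computation shows $q^2-p^2-r^2=-4(t^2-\phi^2 t+1)$ first becomes positive at $t_1=(\phi^2-\sqrt{3\phi-2})/2\approx 0.464$, so every obtuse example sits at parameter distance at least $0.46$ from the regular icosahedron. The continuity argument gives no reason whatsoever to believe the icosahedral face lattice survives that far; your chosen point $(1,\phi,1/2)$ might already have a different combinatorial type, and you have not checked. Hence the second assertion of the proposition --- that some members of \emph{your} family have obtuse non-regular faces --- is not proved. To close the gap you must either verify directly that $P(1,\phi,1/2)$ is combinatorially an icosahedron (the paper's cube-and-supporting-plane argument does this for all of $0<c<a<b$, hence for your point), or show that the path $t\mapsto(1,\phi,t)$, $0\le t\le 1/2$, never meets the discriminant where four vertices become coplanar.
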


\begin{proof} We consider an explicit construction for such polyhedra. Let us fix three real numbers $0\leq \varepsilon <\alpha < \beta$ and the following
points in $\mathbb{R}^3$:
\begin{eqnarray*}
A_1=(\varepsilon, \alpha, \beta),\quad \,\, A_2=(\varepsilon, -\alpha, -\beta),\quad \,\, A_3=(-\varepsilon, \alpha, -\beta),\quad \,\,A_4=(-\varepsilon, -\alpha, \beta),\\
A_5=(\alpha, \beta, \varepsilon),\quad \,\,A_6=(-\alpha, -\beta, \varepsilon),\quad \,\,A_7=(\alpha, -\beta, -\varepsilon),\quad \,\,A_8=(-\alpha, \beta,-\varepsilon),\\
A_9=(\beta, \varepsilon, \alpha),\quad A_{10}=(-\beta, \varepsilon, -\alpha),\quad A_{11}=(-\beta, -\varepsilon, \alpha),\quad A_{12}=(\beta, -\varepsilon, -\alpha).
\end{eqnarray*}
Note that with any point $(x,y,z)$ this set contains also the point $(y,z,x)$ as well as all points of the type $(\pm x, \pm y, \pm z)$,
where an even number of ``$-$'' is used.

Let $P$ be the convex hull of the above $12$ points in $\mathbb{R}^3$. Since $\beta > \alpha >\varepsilon$,
then $P \subset C:=[-\beta, \beta]^3$, moreover, any point $A_i$ is in some face of $C$. This means that $P$ has exactly $12$ vertices $A_i$, $i=1,\dots,12$,
see Fig.\,\ref{Fig7}.
Moreover, on any face of $C$, there are exactly two vertices of $P$, say, $A_i$ and $A_j$. It is easy to see that $[A_i,A_j]$ is the edge of $P$ with length
$d_1=2\sqrt{\varepsilon^2+\alpha^2}$. We have exactly $6$ such edges that used all $12$ vertices of $P$.
It is clear that $P$ is homogeneous with respect to the isometry group $I(P)$, generated by the isometries
$(x,y,z)\mapsto (-x,-y,z)$ and  $(x,y,z)\mapsto (y,z,x)$.

Further, for any $A_i$ we have $x+y+z=h$, where $h$ is one of the numbers
$\beta+\alpha+\varepsilon>\beta-\alpha-\varepsilon \geq -\beta+\alpha-\varepsilon> -\beta-\alpha+\varepsilon$.
It is clear that the planes $x+y+z=\beta+\alpha+\varepsilon$ and $x+y+z=-\beta-\alpha+\varepsilon$
contain the faces $A_1A_5A_9$ and $A_2A_6A_{10}$ of $P$ respectively. Note that the faces  $A_1A_5A_9$  and $A_2A_6A_{10}$ are regular triangles with side lengths
$d_2:=\sqrt{2}\cdot\sqrt{\varepsilon^2+\alpha^2+\beta^2-\varepsilon \alpha - \varepsilon \beta-\alpha \beta}$ and
$d_3:=\sqrt{2}\cdot\sqrt{\varepsilon^2+\alpha^2+\beta^2+\varepsilon \alpha + \varepsilon \beta-\alpha \beta}$ respectively.
These two faces produce three  faces isometric to itself via the isometries
$(x,y,z) \mapsto (x,-y,-z)$, $(x,y,z) \mapsto (-x,y,-z)$,  and $(x,y,z) \mapsto (-x,-y,z)$.

Therefore, $P$ has four faces, isometric to the face $A_1A_5A_9$ (the regular triangle with the side length $d_2$)  and has four faces, isometric to the face
$A_2A_6A_{10}$ (the regular triangle with the side length $d_3$). It is easy to see that $P$ has also $12$ faces isometric to the face
$A_1A_4A_9$ with side lengths $d_1$, $d_2$, and $d_3$ (the latter face has a common edge $A_1A_9$ with the face $A_1A_5A_9$.
Therefore, we obtain a 3-parameter family of (pairwise non-isometric) homogeneous icosahedra $P$ in $\mathbb{R}^3$
with 8 regular triangular faces of two different side lengths
and 12 pairwise isometric non-regular triangular faces.

Let us show that the face $A_1A_4A_9$ (with side lengths $d_1$, $d_2$, and $d_3$) can be obtuse triangle with different side lengths.
It is enough to find a choise $0 < \varepsilon <\alpha < \beta$ such that $\beta^2-(\alpha+\varepsilon)\beta-\varepsilon \alpha\neq \varepsilon^2 +\alpha^2$
(that is equivalent to $d_1\neq d_2$) and
$d_1^2+d_2^2 <d_3^2$.
The latter inequality is equivalent to the next one: $\varepsilon( \alpha+\beta)>\varepsilon^2 +\alpha^2$.
Hence, for any choice  of the above parameters with $\beta>\varepsilon+\alpha^2/\varepsilon- \alpha$ and
$\beta^2-(\alpha+\varepsilon)\beta-\varepsilon \alpha\neq \varepsilon^2 +\alpha^2$,
we obtain  a polyhedron $P$ such that some of its faces are obtuse triangles with different side lengths.

Now, we obtain more detailed information on possible types of the face $A_1A_4A_9$.
Let $\gamma_i$ be the angle in $\triangle A_1A_4A_9$ opposite to the side with the length $d_i$.
Direct calculations show that
\begin{eqnarray*}
\cos (\gamma_1)&=&\frac{2(1 - y)}{\sqrt{(y - x)^2 + (1 - y)^2 + (1 - x)^2}\cdot \sqrt{(y + x)^2 + (1 - y)^2 + (1 + x)^2)}},  \\
\cos (\gamma_2)&=&\frac{x^2 + y^2 + x(1 + y)}{\sqrt{x^2 + y^2}\cdot \sqrt{(y + x)^2 + (1 - y)^2 + (1 + x)^2}}, \\
\cos (\gamma_3)&=&\frac{x^2 + y^2 - x(1 + y)}{\sqrt{x^2 + y^2}\cdot \sqrt{(y - x)^2 + (1 - y)^2 + (1 - x)^2}},
\end{eqnarray*}
where $0<x:=\varepsilon/\beta< y:=\alpha/\beta <1$.

Direct calculations imply $d_2^2+d^2_3=4(\varepsilon^2+\alpha^2+\beta^2-\alpha \beta) = d_1^2+4\beta(\beta -\alpha)> d_1^2$.
Therefore, $\cos (\gamma_1)=\frac{d_2^2+d^2_3-d_1^2}{2d_2d_3}>0$. Moreover, it is easy to check that $\gamma_1$ can be any number in $(0,\pi/2)$
for suitable $\varepsilon, \alpha, \beta$.

Note also that $d_3^2-d^2_2=4\varepsilon (\alpha+\beta) >0$, hence,  $\cos (\gamma_2)=\frac{d_1^2+d^2_3-d_2^2}{2d_2d_3}\geq \frac{d_1^2}{2d_2d_3} >0$.
Note that $\gamma_2$ also can be any number in $(0,\pi/2)$
for suitable $\varepsilon, \alpha, \beta$ (it suffices to consider the corresponding limits for $\cos (\gamma_2)$ when $(x=y^2,y)\to (0,0)$ and $(x=y^2,y)\to (1,1)$).

Note that not every triangle can be a face $A_1A_4A_9$, but $A_1A_4A_9$ can be an obtuse triangle with different side lengths.
Indeed, it could be checked by standard methods, that
$$
-\frac{1}{2}\leq \cos (\gamma_3)=\frac{x^2 + y^2 - x(1 + y)}{\sqrt{x^2 + y^2}\cdot \sqrt{(y - x)^2 + (1 - y)^2 + (1 - x)^2}} \leq \frac{1}{\sqrt{2}}
$$
for all $(x,y)\in \mathbb{R}^2$ such that $0 <x \leq y \leq 1$. Note that for $y=1$ this function tends to $\frac{1}{\sqrt{2}}$ when $x\to 0$.
On the other hand, for $y=x$ this function is equal to $-\frac{1}{2}$ for any $x\in (0,1)$. These arguments show that
$\gamma_3$ can be any number in $(\pi/4,2\pi/3)$ for suitable $\varepsilon, \alpha, \beta$.

If we put $\varepsilon =0$, $\alpha = \frac{\sqrt{5}-1}{2}$, and $\beta=1$, then we get $d_1=d_2=d_3=\sqrt{5} - 1$.
Hence, the triangle $A_1A_4A_9$ is regular and $P$ is a regular icosahedron.

Let us put $x = 1 - (1 + \delta)\cdot t$ and $y = 1 - t$, where $\delta \in (0,1)$, $t \in (0, 1/2)$. Then we have
\begin{eqnarray*}
\cos (\gamma_1)&=& \frac{1}{2\sqrt{\delta^2 + \delta + 1}} + \frac{2\delta + 3}{8\sqrt{\delta^2 + \delta + 1}}\cdot t + O(t^2), \\
\cos (\gamma_2)&=& 1  -\frac{1+\delta}{4} \cdot t + O(t^2), \\
\cos (\gamma_3)&=& -\frac{1}{2\sqrt{\delta^2 + \delta + 1}} + \frac{\delta(1+2\delta)}{4\sqrt{\delta^2 + \delta + 1}} \cdot t + O(t^2)
\end{eqnarray*}
when $t \to 0+0$.

Now, for any fixed $\delta \in (0,1)$ and for a small positive $t$ we see that $\gamma_2$ is a small positive number,
$\gamma_1$ is a little smaller than $\arccos(\frac{1}{2\sqrt{\delta^2 + \delta + 1}})$ and, finally,
$\gamma_3$ is a little smaller than $\pi - \arccos(\frac{1}{2\sqrt{\delta^2 + \delta + 1}})$.
Since $\delta$ is any number from $(0,1)$, we get that the biggest angle of the triangle $A_1A_4A_9$ can be any number from $(\pi/3,2\pi/3)$,
while all sides of $A_1A_4A_9$ have different  lengths.
\end{proof}

\begin{remark}
It is easy to see that $d_2=d_3$ is equivalent to $\varepsilon =0$. In this case
we obtain the isosceles triangle $A_1A_4A_9$. Moreover, if $\beta=\frac {1+\sqrt{5}}{2} \cdot \alpha$, then $P$ is a regular icosahedron.
\end{remark}

\begin{remark}
For the limit case $\varepsilon =\alpha=0$, the above construction gives (as a polyhedron~$P$) a regular octahedron with edge length $\sqrt{2} \beta$.
For another special case, when $\beta=\alpha$, we get a nonregularly truncated cube $TC_3(c)$ for some $c$, see Proposition \ref{ex.trcube.1}.
For a partial subcase $\beta=\alpha$ and $\varepsilon =0$ we get a cuboctahedron (a fully truncated cube).
\end{remark}

\begin{remark}
A description of homogeneous icosahedrons in terms of quaternions was obtained in Section~4 of \cite{KoKo17}.
\end{remark}
\medskip

Descriptions of homogeneous snub cubes and icosahedrons in terms of quaternions can be found in Section~5 of \cite{KoKo17}.
It is clear that every vertex of the first polyhedron $P$ is the vertex of four triangular faces and one rectangular face.
Using Euler's formula, we easily get that $P$ has exactly $6$ (pairwise isometric) rectangular faces and $32$ triangular faces
(at least $8$ of them are pairwise isometric regular triangles, the other $24$ triangles are also pairwise isometric).
If $A$ and $B$ are the edge lengths of regular triangular faces and rectangular faces respectively, then (up to a homothety) we have
$A:=\sqrt{2(1+y+y^2)}$ and $B:=\sqrt{2(1+x+x^2)}$,
whereas the lengths of edges for non-regular faces are
$A, \, B, \, C:=\sqrt{2x^2+y^2}$, see details in  \cite[Section~5]{KoKo17}.

\medskip

\section{On $2$-point and $3$-point homogeneous polyhedra in $\mathbb{R}^3$}\label{sec.8}

The following result is very important for our goals.

\begin{corollary}[Corollary 3.2 in \cite{BerNik22}]\label{co:if-homog}
Every $n$-dimensional convex $n$-point homogeneous polytope in $\mathbb{R}^n$ which has the vertex set $M$ with cardinality $m\geq n+1$, is $m$-point homogeneous.
\end{corollary}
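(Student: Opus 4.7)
The plan is to establish the stronger statement that $P$ is $k$-point homogeneous for every $k\geq 1$, which yields $m$-point homogeneity as the case $k=m$. The core tool is the Euclidean-geometric fact that any two ordered configurations in $\mathbb{R}^n$ with matching pairwise distances are related by some isometry of $\mathbb{R}^n$, and that such an isometry is uniquely determined once prescribed on $n+1$ affinely independent points. Because $P$ is non-degenerate, $M$ affinely spans $\mathbb{R}^n$ by Proposition \ref{pr.efhs}, so an $(n+1)$-independent configuration always lives inside $M$.

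I would first reduce to tuples with pairwise distinct entries: a repetition $A_i=A_j$ forces $B_i=B_j$ via the zero-distance condition, while padding shorter tuples with repeated copies of $A_1$ and $B_1$ preserves the matching-distance property. Hence it suffices to find a matching isometry in $\Symm(P)$ for every pair of $k$-tuples of distinct elements with matching pairwise distances, for each $k\leq m$. For $k\leq n$, pad up to length $n$ and invoke the $n$-point homogeneity hypothesis directly. The case $k=m$ is also easy: both tuples are orderings of $M$, and the Euclidean isometry $\psi$ produced by matching pairwise distances automatically satisfies $\psi(M)=M$ because $\{A_i\}=\{B_i\}=M$ as sets; hence $\psi\in\Symm(P)$, using only the non-degeneracy of $P$.

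The bulk of the work lies in the intermediate range $n<k<m$. Among the distinct entries $A_1,\ldots,A_k$, select $n$ indices $i_1,\ldots,i_n$ so that $A_{i_1},\ldots,A_{i_n}$ are affinely independent (if the entries all lie in a proper affine subspace, first project and reduce to the corresponding lower-dimensional problem). By $n$-point homogeneity, obtain $f\in\Symm(P)$ with $f(A_{i_l})=B_{i_l}$. For each remaining index $j$, the image $f(A_j)\in M$ lies in the intersection $\bigcap_{l=1}^n S(B_{i_l},d(B_{i_l},B_j))$ in $\mathbb{R}^n$, which contains at most the two points $\{B_j,B_j^\ast\}$, where $B_j^\ast$ is the reflection of $B_j$ across the hyperplane $H$ through $B_{i_1},\ldots,B_{i_n}$. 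If $f(A_j)=B_j$ for every remaining $j$, we are done, because $f$ then agrees with the Euclidean isometry on $n+1$ affinely independent entries and uniqueness forces agreement on all the rest.

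The main obstacle is ruling out the other sign choice, that is, showing that whenever $f(A_{j_0})=B_{j_0}^\ast\neq B_{j_0}$ for some $j_0$, a genuine matching element of $\Symm(P)$ still exists. The idea is to apply $n$-point homogeneity a second time to the $n$-subtuple obtained by swapping $A_{i_n}$ for $A_{j_0}$, producing $g\in\Symm(P)$ matching $(A_{i_1},\ldots,A_{i_{n-1}},A_{j_0})$ to $(B_{i_1},\ldots,B_{i_{n-1}},B_{j_0})$. Analyzing the composition $g\circ f^{-1}\in\Symm(P)$, which fixes the $(n-2)$-dimensional flat spanned by $B_{i_1},\ldots,B_{i_{n-1}}$, and using that $B_{j_0}^\ast=f(A_{j_0})\in M$, one iterates this procedure to either produce a matching isometry directly or to exhibit the relevant hyperplane reflection $\sigma_H$ as an element of $\Symm(P)$, in which case $\sigma_H\circ f$ is the required isometry. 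Once a matching isometry is secured on $n+1$ affinely independent entries, the uniqueness of isometries on such configurations extends the agreement to all remaining indices and completes the proof.
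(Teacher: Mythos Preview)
The paper does not contain a proof of this corollary: it is quoted as Corollary~3.2 of \cite{BerNik22} and invoked as a black box, so there is nothing in the present paper to compare your attempt against.

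On its own merits, your sketch has the right architecture (reduce to distinct entries, handle $k\le n$ and $k=m$ separately, then bridge $n<k<m$ via an affinely independent $n$-subtuple), but the paragraph you flag as the ``main obstacle'' is not yet a proof. You never explain why the iteration terminates or how it forces $\sigma_H\in\Symm(P)$; the parenthetical ``project and reduce to the corresponding lower-dimensional problem'' is also unjustified, since $\Symm(P)$ need not preserve a proper affine slice of $\mathbb{R}^n$. The ingredient your argument is missing is the role of the center $O$ (Proposition~\ref{pr.efhs}): every element of $\Symm(P)$ lies in $O(n)$ and fixes $O$. In the affinely independent case, apply $n$-point homogeneity once for each omitted index $i=1,\dots,n+1$, obtaining $f_i\in\Symm(P)$ with $f_i\in\{\psi,\ \sigma_{H_i}\circ\psi\}$, where $\psi$ is the unique Euclidean isometry carrying the $A$-tuple to the $B$-tuple and $H_i$ is the hyperplane through $\{B_j:j\ne i\}$. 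If every $f_i$ were the reflected version, then $\sigma_{H_i}\sigma_{H_j}\in\Symm(P)$ for all $i,j$; this orientation-preserving isometry fixes the $(n-2)$-flat $H_i\cap H_j$ pointwise and also fixes $O$, so whenever $O\notin H_i\cap H_j$ it fixes a hyperplane pointwise and must be the identity, forcing $H_i=H_j$, contrary to the affine independence of the $B_j$. Since $O$ cannot lie in every $H_i$ (else its barycentric coordinates with respect to $B_1,\dots,B_{n+1}$ would all vanish), some $f_i$ equals $\psi$, establishing $(n+1)$-point homogeneity. Once that is in hand, your uniqueness-on-affinely-independent-points argument finishes all larger $k$ with no further difficulty.
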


Hence, for $n=3$, the $3$-point homogeneity of a non-degenerate polyhedron $P$ implies its $m$-point homogeneity  for all $m\in \mathbb{N}$.

\medskip

Now, let $P$ be a $2$-point homogeneous polytope in $\mathbb{R}^3$
with the vertex set $M$.
The following proposition (which is a refinement of Proposition \ref{prop.gen0} for $3$-dimensional polyhedra)
gives a powerful tool to study $2$-point homogeneous polyhedra.

\begin{prop}\label{prop.n2}
Let $P$ be a $2$-point homogeneous polyhedron in $\mathbb{R}^3$ with the vertex set~$M$. Then for any points $A,B \in M$,
such that the center $O$ of $P$ is not the midpoint of the segment $[A,B]$,
either the reflection $\varphi_{A,B}$ with respect to the plane $\pi_{A,B}$,
that is orthogonal to $\overrightarrow{AB}$ and passes through the midpoint $D$ of the segment $[A,B]$,
or the rotation $\psi_{A,B}$ by angle $\pi$ around $OD$ is the isometry of~$P$.
\end{prop}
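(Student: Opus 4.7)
The plan is to exploit the $2$-point homogeneity of $P$ to produce a specific isometry $\sigma$ that interchanges $A$ and $B$, and then to classify $\sigma$ via its fixed set. Since $d(A,A)=d(B,B)=0$ and $d(A,B)=d(B,A)$, the pairs $(A,B)$ and $(B,A)$ satisfy the hypothesis of Definition~\ref{de:mpoint} with $m=2$, so there exists $\sigma\in\Symm(P)$ with $\sigma(A)=B$ and $\sigma(B)=A$. I will show that $\sigma$ is forced to coincide with either $\psi_{A,B}$ or $\varphi_{A,B}$, which in particular proves the proposition.

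First I would identify a pointwise-fixed line of $\sigma$. By Proposition~\ref{pr.efhs}, $\sigma$ belongs to $O(3)$ after identifying $O$ with the origin, so $\sigma(O)=O$. Moreover $\sigma(D)=\tfrac12(\sigma(A)+\sigma(B))=\tfrac12(B+A)=D$. Thus $\sigma$ is an isometry of $\mathbb{R}^3$ fixing the two distinct points $O$ and $D$ (they are distinct by the hypothesis that $A$ and $B$ are not symmetric with respect to $O$), so it fixes the line $\ell:=OD$ pointwise.

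Next I would place $\ell$ inside $\pi_{A,B}$. All vertices of $P$ lie on a sphere centered at $O$ by Proposition~\ref{pr.efhs}, so $|OA|=|OB|$, which means $O$ belongs to the perpendicular bisector plane $\pi_{A,B}$ of $[A,B]$. Together with $D\in\pi_{A,B}$, this gives $\ell\subset\pi_{A,B}$, and in particular $\ell\perp\overrightarrow{AB}$, so the vector $\overrightarrow{AB}$ lies in $\ell^{\perp}$.

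Finally I would classify $\sigma$ as an orthogonal transformation of $\mathbb{R}^3$ fixing the line $\ell$ pointwise. Its restriction to $\ell^{\perp}\cong\mathbb{R}^2$ is an element of $O(2)$ that sends the nonzero vector $\overrightarrow{AB}$ to $-\overrightarrow{AB}$. The only elements of $O(2)$ sending a fixed nonzero vector $v$ to $-v$ are the rotation by $\pi$ (which acts as $-\Id$ on all of $\mathbb{R}^2$) and the reflection across the line through the origin perpendicular to $v$. In the first case $\sigma$ is the rotation by $\pi$ of $\mathbb{R}^3$ around $\ell=OD$, which is precisely $\psi_{A,B}$. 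In the second case $\sigma$ is the reflection of $\mathbb{R}^3$ across the plane spanned by $\ell$ and the direction in $\ell^{\perp}$ perpendicular to $\overrightarrow{AB}$; this plane passes through $D$ and is orthogonal to $\overrightarrow{AB}$, so it is exactly $\pi_{A,B}$, giving $\sigma=\varphi_{A,B}$. The argument has no genuine obstacle; its only subtlety is the observation that $O\in\pi_{A,B}$, which is what forces the natural rotation axis to be the correct one, and this comes for free from Proposition~\ref{pr.efhs}.
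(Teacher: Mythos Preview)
Your proof is correct and follows essentially the same approach as the paper: obtain $\sigma\in\Symm(P)$ swapping $A$ and $B$, observe that $\sigma$ fixes the line $OD$ pointwise, and then classify $\sigma$. The paper phrases the final dichotomy as ``$\sigma$ preserves orientation or not,'' whereas you spell it out via the $O(2)$-action on $\ell^{\perp}$; these are the same split, and your version simply makes the details explicit.
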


\begin{proof}
Since $P$ is $2$-point homogeneous, there is an isometry $\sigma$ of $P$, such that $\sigma(A)=B$ and $\sigma(B)=A$.
Obviously, $\sigma$ fixes all points on the straight line $OD$ and the plane through the points $A$, $B$, and $O$.
If $\sigma$ preserves the orientation, then $\sigma=\psi_{A,B}$; if not, then $\sigma=\varphi_{A,B}$.
\end{proof}

\begin{prop}\label{prop.n3}
Let $P$ be a $2$-point homogeneous polyhedron in $\mathbb{R}^3$ and $F$ be a face of $P$.
If $F$ is not a triangle or a rectangle, then $F$ is a regular $n$-gon for some $n\geq 5$.
\end{prop}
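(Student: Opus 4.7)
The plan is to combine the structural description of faces of homogeneous polyhedra from Corollary~\ref{co.face.hom.1} with the $2$-point homogeneity tool from Proposition~\ref{prop.n2}. By Corollary~\ref{co.face.hom.1} the face $F$ belongs to one of four families: a triangle, a regular $n$-gon with $n\geq 4$, a non-regular homogeneous $2m$-gon $R_m(\alpha,\beta)$ with $m\geq 2$ and $\alpha\neq\beta$, or a non-rectangular isosceles trapezoid. The hypothesis that $F$ is neither a triangle nor a rectangle reduces the possibilities to a regular $n$-gon with $n\geq 5$ (the desired conclusion), $F=R_m(\alpha,\beta)$ with $m\geq 3$, or a non-rectangular isosceles trapezoid; my task is to exclude the last two.

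For each of these two forbidden types I select a pair $A,B$ of vertices of $F$ that are \emph{non-adjacent} in $F$, so that $[A,B]$ is a diagonal of $F$ and therefore is not an edge of the polyhedron $P$. By Proposition~\ref{prop.n2}, some isometry $\sigma$ of $P$ swaps $A$ and $B$, and $\sigma$ is either the reflection $\varphi_{A,B}$ across $\pi_{A,B}$ or the half-turn $\psi_{A,B}$ around the axis $OD$. The half-turn possibility is ruled out by a standard polytope fact: if $\psi_{A,B}$ were an isometry of $P$, then $F':=\psi_{A,B}(F)$ would be a face of $P$ containing both $A$ and $B$, and the intersection $F\cap F'$---a face of $P$ containing two distinct vertices---would have positive dimension and therefore would have to be an edge of $P$ joining $A$ to $B$; but $[A,B]$ is not an edge of $P$, contradiction.

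The reflection case is ruled out via the following geometric remark, which is the heart of the argument. The plane $\pi_{A,B}$ contains the midpoint $D\in\Pi_F$ together with the unique line in $\Pi_F$ through $D$ perpendicular to $AB$; that line passes through the face center $O_F$ because $O_F$ is equidistant from $A$ and $B$. Hence $\pi_{A,B}$ is orthogonal to $\Pi_F$, so $\varphi_{A,B}(\Pi_F)=\Pi_F$; since distinct faces of a convex polyhedron do not lie in a common supporting plane, $\varphi_{A,B}(F)=F$, and $\varphi_{A,B}|_F$ is the reflection of $F$ across the line $O_FD$ in $\Pi_F$, which must therefore be a symmetry of $F$. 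To finish, I choose $(A,B)$ so that this line is \emph{not} a symmetry axis of $F$: in $F=R_m(\alpha,\beta)$ with $m\geq 3$, take $(A,B)=(A_1,A_3)$ (labels as in Example~\ref{ex.2dim.1}), so that the axis $O_F D_{13}$ lies along the angular direction $(\alpha+\beta)/2$ and the reflection across it sends $A_2$ (at angle $\alpha$) to angular position $\beta$, which is not a vertex of $R_m(\alpha,\beta)$ when $\alpha\neq\beta$. In the non-rectangular isosceles trapezoid $ABCD$, take the diagonal pair $(A,C)$: the axis $O_F D_{AC}$ does not coincide with the unique axis of symmetry of $F$ (the perpendicular bisector of the two bases). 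In each case both options of Proposition~\ref{prop.n2} are excluded, a contradiction, which completes the proof. The only side check is that the chosen non-adjacent pair is not antipodal through $O$, so that Proposition~\ref{prop.n2} applies; this is immediate because antipodality through $O$ would force $[A,B]$ to be a diameter of the circumscribed circle of $F$, which is incompatible with the choices above (it would force $m=2$ in the first case, and would force $F$ to be a rectangle in the second).
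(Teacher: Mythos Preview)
Your half-turn elimination has a genuine gap. You argue that if $\psi_{A,B}$ were an isometry of $P$, then $F':=\psi_{A,B}(F)$ is a face containing $A$ and $B$, and ``the intersection $F\cap F'$ \dots\ would have to be an edge of $P$ joining $A$ to $B$''. That inference is valid only when $F'\neq F$; if $F'=F$ then $F\cap F'=F$ is two-dimensional, not an edge, and no contradiction follows. In fact your own premise that $[A,B]$ is not an edge of $P$ makes $F$ the \emph{unique} $2$-face of $P$ containing both $A$ and $B$, so your argument actually forces $F'=F$, landing you precisely in the case you did not treat.

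The repair is short but is exactly the substantive step: show that the half-turn about $OD$ cannot preserve $F$. Since $O\notin\Pi_F$, the axis $OD$ can preserve $\Pi_F$ only if $OD\perp\Pi_F$, which forces $D=O_F$, i.e.\ $A,B$ antipodal on the circumscribed circle of $F$. For your choices this fails: $(A_1,A_3)$ in $R_m(\alpha,\beta)$ with $m\ge 3$ subtend angle $\alpha+\beta=2\pi/m<\pi$, and a diagonal of an inscribed non-rectangular isosceles trapezoid is not a diameter (a diametral diagonal would force all angles to be right). This is precisely what the paper checks first (``$F$ is not invariant under the rotation by the angle $\pi$ around the line $OE$'') before concluding that the swap must be the reflection.

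Once patched, your proof is correct but structured differently from the paper's. The paper does not invoke Corollary~\ref{co.face.hom.1} at all: for any three consecutive vertices $A,B,C$ of $F$ it rules out the half-turn as above, so the isometry swapping $A$ and $C$ is the reflection $\varphi_{A,C}$, which fixes $F$ and $B$ and yields $d(A,B)=d(B,C)$; varying the triple shows every edge of $F$ has the same length, and an inscribed equilateral polygon is regular. Your route first classifies the possible faces and then manufactures, for each forbidden type, a diagonal pair for which neither $\psi_{A,B}$ nor $\varphi_{A,B}$ can be a symmetry of $F$. The paper's argument is more uniform and self-contained; yours leans on the face classification but makes the contradiction explicit for each case.
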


\begin{proof} Let us suppose that $F$ is neither a triangle, nor a rectangle. Denote by $O_1$ the center of gravity of $F$.
Let us consider four consecutive vertices $A, B, C, D$ on this face. By our assumption $[A,C]$ is not an edge of $P$.
Let us prove that $F$ is not invariant under the rotation by the angle $\pi$ around the line $OE$, where $O$ is the center of $P$ and $E$ is the midpoint of $[A,C]$.
Indeed, it is clear if $F$ is a $n$-gon with $n \geq 5$. On the other hand, if $F$ is a quadrilateral invariant under the above rotation, then it is a rectangle
(since $F$ is inscribed polygon by Corollary \ref{co.inscr.face.1}), that is impossible by assumption.

Since $P$ is $2$-point homogeneous, then there is an isometry $\sigma$ of $P$ that interchanges the points $A$ and $C$.
Due to Proposition \ref{prop.n2}, $\sigma$ is
the reflection with respect to the plane $\pi_{A,C}$,
that is orthogonal to $\overrightarrow{AC}$ and passes through $E$.
Hence, $\sigma(F)=F$ and $\sigma(O_1)=O_1$. In particular, we see that $d(O_1,A)=d(O_1,C),$ $\sigma(B)=B,$ and $d(A,B)=d(B,C)$.
The same arguments work for any consecutive triples of vertices of $F$. Therefore, all edges of $F$ have the same lengths.
Moreover, the face $F$ is an inscribed polygon by Corollary \ref{co.inscr.face.1}.
Hence, $F$ is a regular polygon.
\end{proof}
\smallskip

We have the following obvious corollary.

\begin{corollary}\label{co.2-pont.f}
Let $P$ be a $2$-point homogeneous polyhedron and $F$ be a face of $P$.
If the polygon $F$ is not a triangle, then $F$ is $2$-point homogeneous.
\end{corollary}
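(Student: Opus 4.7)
The plan is to simply combine the classification of non-triangular faces established in the preceding proposition with the two-dimensional classification theorem. First I would appeal to Proposition \ref{prop.n3}, which asserts that any face $F$ of a $2$-point homogeneous polyhedron $P \subset \mathbb{R}^3$ that is neither a triangle nor a rectangle must be a regular $n$-gon with $n \geq 5$. Consequently, under the hypothesis that $F$ is not a triangle, the face $F$ falls into one of exactly two classes: rectangles, or regular $n$-gons with $n \geq 5$.

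Next I would invoke Theorem \ref{th.2dim.1}, which gives the complete classification of $2$-point homogeneous polygons in $\mathbb{R}^2$: every regular polygon is $m$-point homogeneous for all $m \in \mathbb{N}$, and every rectangle (i.e.\ every polygon similar to some $R_2(\alpha,\beta)$) is likewise $m$-point homogeneous for all $m \in \mathbb{N}$. In particular, both types of polygon are $2$-point homogeneous as abstract metric spaces with the induced Euclidean metric. Matching against the two cases produced in the previous paragraph, the face $F$ is $2$-point homogeneous in either case, which is exactly the conclusion to be proved.

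I do not foresee any obstacle: the corollary is a direct deductive combination of a structural result (Proposition \ref{prop.n3}) and a classification result (Theorem \ref{th.2dim.1}), with no additional geometric input required. The only thing worth flagging in the write-up is a brief reminder that ``$2$-point homogeneous'' for the face $F$ refers to its intrinsic metric structure as a subset of the ambient plane through $F$, which is precisely the setting in which Theorem \ref{th.2dim.1} is stated.
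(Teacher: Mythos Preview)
Your proposal is correct and matches the paper's intended approach: the paper states the corollary immediately after Proposition~\ref{prop.n3} as ``the following obvious corollary'' without giving a proof, and the intended argument is exactly the combination of Proposition~\ref{prop.n3} with Theorem~\ref{th.2dim.1} that you spell out.
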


\begin{prop}\label{prop.n4}
Let $P$ be a $2$-point homogeneous polyhedron in $\mathbb{R}^3$ with a face $F$ that is a non-regular triangle. If all sides of $F$ have different lengths,
then $P$ is a tetrahedron. If two sides of $F$ have equal length, then $P$ is either a tetrahedron or a right antiprism, where $F$ is a lateral face of the antiprism.
\end{prop}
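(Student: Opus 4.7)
The plan is to analyze the faces incident to an arbitrary vertex $A$ of $P$ using Propositions~\ref{pr.idex.1}--\ref{pr.idex.2}, then upgrade the local conclusions globally via $2$-point homogeneity. For any non-regular triangle $F$, the vertex type of $F$ has $k=1$ in the notation of Proposition~\ref{pr.idex.1}, so at each vertex $A$ the impact of faces isometric to $F$ is $l\pi$; the strict inequality $<2\pi$ at a convex vertex then forces $l=1$. Hence exactly three faces of $P$ isometric to $F$ meet at $A$, with $A$ occupying each vertex-class of $F$ exactly once.

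\textbf{Case 1} (sides of $F$ distinct, $a<b<c$). The three $F$-faces at $A$ are the $\alpha$-, $\beta$-, $\gamma$-face, contributing edges at $A$ of length-pairs $\{b,c\},\{a,c\},\{a,b\}$ respectively. I would show pairwise that the common-length edges coincide: if the two length-$c$ edges (one in the $\alpha$-face, one in the $\beta$-face) went to distinct vertices $V,V'$, $2$-point homogeneity (Proposition~\ref{prop.n2}) would give $\sigma\in\I(A)$ with $\sigma(V)=V'$; since $l=1$ makes the $\alpha$-face the unique $F$-face at $A$ with angle $\alpha$, the isometric image $\sigma(\alpha\text{-face})$ must coincide with the $\alpha$-face itself, forcing $V'$ to be the remaining vertex of the $\alpha$-face at distance $b$ from $A$, not $c$, a contradiction. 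Thus the three $F$-faces form a closed cycle at $A$ of degree $3$, and Euler's formula together with $3V=2E$, $3F=2E$ gives $V=4$: $P$ is a tetrahedron.

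\textbf{Case 2} ($F$ isosceles with legs of length $s$ and base of length $t$, $s\ne t$). Exactly one ``apex'' and two ``base'' $F$-faces meet at $A$, contributing edges of lengths $s,s$ (to vertices $B_1,B_2$) and $s,t$ (to vertices $C_i, D_i$ with $|AD_i|=t$), respectively. The same unique-face argument shows that every edge-neighbor of $A$ at distance $s$ lies in $\{B_1,B_2\}$ and every edge-neighbor at distance $t$ lies in $\{D_1,D_2\}$. It follows that the length-$s$ edge of each base-$F$-face coincides with one of the two length-$s$ edges of the apex-face, so the three $F$-faces form a linear chain at $A$ with outer edges $AD_1$ and $AD_2$. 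If $D_1=D_2$ the chain closes into a $3$-cycle at $A$, the degree is $3$, and the resulting four-vertex polytope is the homogeneous tetrahedron of Example~\ref{ex.tetra.1}. Otherwise $D_1\ne D_2$ and at least one additional face $G$ appears at $A$ between the two base-$F$-faces; once $A$ is shown to have degree $4$, the two edges of $G$ at $A$ are both of length $t$, Corollary~\ref{co.2-pont.f} combined with Proposition~\ref{prop.n3} forces $G$ to be a regular $n$-gon with side $t$ (equilateral triangle, square, or larger regular polygon), Lemma~\ref{le.hom_pol-eu1} then gives the combinatorial counts $V=2n,E=4n,F=2n+2$ of a right $n$-antiprism, and Propositions~\ref{pr.antipr.1} ($n\ge 4$) and~\ref{pr.antipr.2} ($n=3$) upgrade this combinatorial identification to isometric identification of $P$ as a right antiprism with $F$ as a lateral face.

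The main obstacle is the degree-$4$ step in Sub-case $D_1\ne D_2$: ruling out the possibility that $A$ has $5$ or more faces. By Lemma~\ref{le.hom_pol-eu1} such degrees produce the combinatorial types of a homogeneous icosahedron or snub cube, and Proposition~\ref{pr.homicos.1} shows these genuinely occur as homogeneous polyhedra, so the $2$-point hypothesis must be used carefully. I would eliminate them as follows: any additional edge at $A$ cannot have length $s$ or $t$ (those edges are already exhausted, since $AB_1,AB_2$ lie in two $F$-faces each and every $t$-neighbor is in $\{D_1,D_2\}$), so by Proposition~\ref{pr.four.edges} it must have a third length $u$; the explicit coordinate description of Proposition~\ref{pr.homicos.1} then shows that the vertex-isotropy group of a non-regular homogeneous icosahedron is generated essentially only by a single coordinate reflection of order $2$, whereas the distance spheres $S(A,d_2)$ contain $4$ vertices, so the isotropy cannot act transitively on $S(A,d_2)$---violating Proposition~\ref{pr:two-point homogeneous}; a parallel argument handles the snub cube combinatorial type.
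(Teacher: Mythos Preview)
Your overall strategy---constraining the vertex figure via Proposition~\ref{pr.idex.1} and the isotropy action on distance spheres, then invoking Propositions~\ref{pr.antipr.1}--\ref{pr.antipr.2} for the geometric identification---is genuinely different from the paper's argument. The paper exploits Proposition~\ref{prop.n2} directly: for each side $[A,C]$ of $F$ whose adjacent sides are unequal, the reflection $\varphi_{A,C}$ cannot be a symmetry of $P$ (it would fix $F$), so the $\pi$--rotation $\psi_{A,C}$ about the line $OD$ must be; these rotations carry $F$ to adjacent congruent faces, and by iterating them the paper builds the tetrahedron or the lateral surface of a right antiprism explicitly. Your route is more combinatorial and leans on Propositions~\ref{pr.antipr.1}--\ref{pr.antipr.2} at the end, while the paper's is self-contained and constructive. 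Your Case~1 is fine (the citation should be Proposition~\ref{pr:two-point homogeneous} rather than Proposition~\ref{prop.n2}), and the isosceles chain argument up to ``degree $4$ or more'' is correct.

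The real gap is your elimination of degree $\ge 5$. Invoking the explicit family of Proposition~\ref{pr.homicos.1} is both incomplete (that family is not shown to exhaust all homogeneous icosahedra) and premature (you have not yet reduced the face types enough to apply Lemma~\ref{le.hom_pol-eu1}, so the ``icosahedron or snub cube'' dichotomy is unjustified). A clean fix uses only tools you already deployed. You have $S(A,s)=\{B_1,B_2\}$ and $S(A,t)=\{D_1,D_2\}$, so any additional edge at $A$ has a third length $u\notin\{s,t\}$; let $G_1$ be the face between $AD_1$ and the first $u$--edge. By Proposition~\ref{prop.n3}, $G_1$ is a triangle or a rectangle. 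If $G_1$ is a triangle it is non-regular (adjacent sides $t\ne u$) and not isometric to $F$ (its side set contains $u\notin\{s,t\}$), so by Proposition~\ref{pr.idex.2}\,(2) its impact at $A$ is $\pi$; together with the $F$--impact $\pi$ this forces angle sum $\ge 2\pi$, impossible. If $G_1$ is a $t\times u$ rectangle, then the symmetric face $G_2$ at $AD_2$ (the $\I(A)$--element swapping $D_1,D_2$ swaps them) is one too; their two right angles plus the $F$--impact already give $2\pi$, again impossible. Hence no $u$--edge exists, the degree is exactly $4$, and your appeal to Propositions~\ref{pr.antipr.1}--\ref{pr.antipr.2} goes through.
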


\begin{proof}
Let $F$ be a non-regular triangle $ABC$. We may suppose that $d(A,B)\neq d(B,C)$ and $d(A,B)\neq d(A,C)$.
Denote by $D_1$ and $D_2$ the midpoints of the segments $[A,C]$ and $[B,C]$ respectively.
By the above assumptions, neither
the reflection $\varphi_{A,C}$ with respect to the plane $\pi_{A,C}$,
that is orthogonal to $\overrightarrow{AC}$ and passes through $D_1$, nor
the reflection $\varphi_{B,C}$ with respect to the plane $\pi_{B,C}$,
that is orthogonal to $\overrightarrow{BC}$ and passes through $D_2$
are not isometry of $P$.

Now, denote by $O$ the center of the circumsphere (i.~e. the gravity center) for $P$.
According to Proposition \ref{prop.n2},
both
the rotation $\psi_1=\psi_{A,C}$ by the angle $\pi$ around $OD_1$  and
the rotation $\psi_2=\psi_{B,C}$ by the angle $\pi$ around $OD_2$
are the isometries of~$P$.
It is clear that $\psi_1(F)$ and $\psi_2(F)$ are faces of $P$ with a common vertex $C$.
If $\alpha:=\angle CAB$, $\beta:=\angle ABC$, $\gamma:=\angle BCA$, then the above faces have the following angles at the vertex $C$:
the angle $\gamma$ of $F$ is between the angles $\alpha$ from $\psi_1(F)$ and $\beta$ from $\psi_2(F)$ ($[A,C]$ is a common edge of $F$ and $\psi_1(F)$,
$[B,C]$ is a common edge of $F$ and $\psi_2(F)$).

If all sides of $F$ have different lengths, then (by the same arguments)
the rotation $\psi'_1=\psi_{A,B}$ by
the angle $\pi$ around $OD_3$, where $D_3$ is the midpoind of the segment $[A,B]$, is an
isometry of $P$. We have three faces of $P$, say $F$, $\psi_1'(F)$, and $\psi_2(F)$, with a common vertex $B$,
such that $[A,B]$ is a common edge of $F$ and $\psi_1'(F)$,
$[B,C]$ is a common edge of $F$ and $\psi_2(F)$. Moreover, the angle $\beta$ of $F$ is between the angles $\alpha$ and $\gamma$ of $\psi_1'(F)$ and $\psi_2(F)$
respectively.
Since $P$ is homogeneous, there are three  faces with a common point $C$, whose union is isometric to $F\cup \psi_1'(F) \cup \psi_2(F)$,
in particular
the angle $\beta$ is between the angles $\gamma$ and $\alpha$. At the same vertex we have three angles from $F$, $\psi_1(F)$ and $\psi_2(F)$, where
the angle $\gamma$ is between the angles $\alpha$ and $\beta$. Since $\alpha+\beta+\gamma=\pi$, and by homogeneity
at $C$ we have the equal quantity of the angles $\alpha$, $\beta$, and $\gamma$, then there are exactly 3 faces of $P$ adjacent to $C$
(with the values $\alpha$, $\beta$, and $\gamma$). The same we have in any other vertex of $P$ due to its homogeneity. Hence, $P$ is a tetrahedron.

Now, let us suppose that $\alpha:=\angle CAB=\angle ABC$ and $\gamma: =\angle BCA \neq \alpha$. Then the first argument about the faces $F$,
$\psi_1(F)$, and $\psi_2(F)$ again follows. Moreover, $d(A,C)=d(B,C)$ and $d(D_1,O)=d(D_2,O)$.
Let us consider the plane $\Pi$ through the points $O,D_1,D_2$. It is easy to see that  $D_2':=\psi_1(D_2), D_1':=\psi_2(D_1) \in \Pi$ and
$d(D_2',D_1)=d(D_1,D_2)=d(D_1',D_2)$. Applying the maps $\psi_1$ and $\psi_2$ to the obtained points and iterating this process, we obtain
a vertex set of some regular $n$-gon for $n\geq 3$ in the plane $\Pi$ and with side length $d(D_1,D_2)$.
In order to get a convex polyhedron, $n$ should be even number: $n=2m$, where $m\geq 2$.
If $m=2$ and $n=4$, then we see that $P$ is a tetrahedron, as in the case of pairwise different side lengths.
For $n=2m$ with $m \geq 3$, we get a new construction.
In fact, the union of the faces obtained by iterations of $\psi_1$ and $\psi_2$ applied to $F$ is the
lateral surface of a right antiprism over a regular $m$-gon.
\end{proof}

\begin{remark}\label{re.tetr.1}
It should be noted that a right antiprism on a segment (that can be considered as a degenerate polygon) in $\mathbb{R}^3$ is a tetrahedron.
\end{remark}

\begin{prop}\label{prop.n6.0}
Let $P$ be a homogeneous tetrahedron  with four pairwise isometric faces that are acute triangles
as in  Examples~\ref{ex.tetra.1} and \ref{ex.new.1}. Then $P$ is $n$-point homogeneous for all $n \in \mathbb{N}$.
\end{prop}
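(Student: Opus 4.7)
The plan is to reduce the statement to $3$-point homogeneity and then exploit the opposite-edge structure of $P$ established in Examples~\ref{ex.tetra.1} and~\ref{ex.new.1}. By Corollary~\ref{co:if-homog} (with dimension $3$ and vertex cardinality $4\geq 3+1$), any $3$-point homogeneous polyhedron in $\mathbb{R}^3$ is already $n$-point homogeneous for every $n\in\mathbb{N}$, so it suffices to verify that $P$ is $3$-point homogeneous.

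Fix the vertices $A_1,A_2,A_3,A_4$ of $P$ as in Example~\ref{ex.new.1}. The key structural fact I will use is that opposite edges have equal lengths: $d(A_i,A_j)=d(A_p,A_q)$ whenever $\{i,j,p,q\}=\{1,2,3,4\}$. Consider two ordered triples $(A_{i_1},A_{i_2},A_{i_3})$ and $(A_{j_1},A_{j_2},A_{j_3})$ of vertices of $P$ satisfying $d(A_{i_a},A_{i_b})=d(A_{j_a},A_{j_b})$ for all $a,b\in\{1,2,3\}$. Let $A_{i_4}$ and $A_{j_4}$ be the unique vertices omitted by each triple, and extend to a bijection $\phi$ of the vertex set by $\phi(A_{i_k})=A_{j_k}$ for $k=1,2,3,4$. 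To show that $\phi$ preserves every pairwise distance, it remains to check $d(A_{i_4},A_{i_k})=d(A_{j_4},A_{j_k})$ for $k=1,2,3$. For such $k$, set $\{r,s\}=\{1,2,3\}\setminus\{k\}$; the opposite-edge property then yields
\[
d(A_{i_4},A_{i_k})=d(A_{i_r},A_{i_s}) \quad\text{and}\quad d(A_{j_4},A_{j_k})=d(A_{j_r},A_{j_s}),
\]
and the right-hand sides coincide because $\phi$ already preserves distances inside the triple.

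Finally, since the four vertices of $P$ are affinely independent in $\mathbb{R}^3$, the distance-preserving self-bijection $\phi$ of the vertex set extends uniquely to a Euclidean isometry of $\mathbb{R}^3$ (any two configurations of $n+1$ affinely independent points in $\mathbb{R}^n$ sharing the same pairwise distance matrix are related by a rigid motion). This extension preserves $P$ and maps the first triple to the second, giving the required $3$-point homogeneity. The decisive step, and the place where a tetrahedron lacking the opposite-edge symmetry would fail, is the distance verification involving the omitted vertex: the identity $d(A_{i_4},A_{i_k})=d(A_{i_r},A_{i_s})$ is precisely what forces the distances from the fourth vertex to be controlled by the chosen triple alone.
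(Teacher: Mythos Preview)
Your argument is correct and takes a cleaner route than the paper's proof. The paper proceeds by explicit case analysis: it first establishes $2$-point homogeneity by examining the isotropy subgroup at a vertex and its action on each distance sphere, splitting into subcases according to whether $k,l,m$ are pairwise distinct, exactly two coincide, or all coincide; it then proves $3$-point homogeneity by a further case split (repeated elements in the triple versus three distinct vertices, and again on the multiplicity pattern of $k,l,m$). Your approach bypasses this by exploiting a single structural fact --- opposite edges of $P$ have equal length --- which forces the distances from the omitted fourth vertex to be determined by the chosen triple alone. This is a genuinely different organization and yields a shorter, coordinate-free argument.

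One small point to tidy up: your extension step presumes the two triples consist of three \emph{distinct} vertices each, since otherwise ``the unique vertex omitted'' is undefined. Definition~\ref{de:mpoint} allows repetitions in the $m$-tuples, so you must also dispose of that case. A triple with a repetition reduces to $2$-point (or $1$-point) homogeneity. Homogeneity is the hypothesis; for $2$-point homogeneity the same opposite-edge idea works: given pairs $(A_{i_1},A_{i_2})$ and $(A_{j_1},A_{j_2})$ at equal distance, the two remaining vertices on each side form the opposite edge of that same length, and the multisets $\{d(A_{i_1},A_{i_3}),d(A_{i_1},A_{i_4})\}$ and $\{d(A_{j_1},A_{j_3}),d(A_{j_1},A_{j_4})\}$ each equal the set of the other two edge lengths of $P$, so after possibly swapping $j_3\leftrightarrow j_4$ you obtain a distance-preserving bijection of all four vertices. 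With that sentence added, the proof is complete.
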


\begin{proof}
We know that $P$ is homogeneous, see Example~\ref{ex.tetra.1}.
Let us prove that a homogeneous tetrahedron $P$ (with four pairwise isometric faces, that are acute triangles), as in Examples \ref{ex.tetra.1} and \ref{ex.new.1},
is $3$-point homogeneous.
We use a representation of $P$ as a convex hull of the points
$A_1=(k,l,m)$, $A_2=(-k,-l,m)$, $A_3=(-k,l,-m)$, $A_4=(k,-l,-m)$,
where $k,l,m \in \mathbb{R}$. The polyhedron $P$ is homogeneous according to Example \ref{ex.new.1}.
If $d_{ij}=d(A_i,A_j)$, then $d_{12}=d_{34}=2\sqrt{k^2+l^2}$, $d_{13}=d_{24}=2\sqrt{k^2+m^2}$, and $d_{14}=d_{23}=2\sqrt{l^2+m^2}$.
If $k=l=m$, then $P$ is a regular tetrahedron, hence, is $n$-point homogeneous for any $n \in \mathbb{N}$.
In what follows we assume that not all numbers among $k,l,m$ are coinciding.

Let us consider the sphere $S(A_1,r)=\{A_i\,|\, d(A_1,A_i)=r, i=1,2,3,4\,\}$ with center $A_1$.
We know that $P$ is $2$-point homogeneous if and only if the isotropy group $\I(A_1)$
acts transitively on every (non-empty) distance sphere with center $A_1$.
If $k\neq l \neq m \neq k$, then all distance spheres $S(A_1,r)$ have at most one point. Therefore, $\I(A_1)$ acts transitively on every $S(A_1,r)$, hence,
$P$ is $2$-point homogeneous. Finally, we should consider the case, when there are exactly two coinciding numbers among $k,l,m$.
Without loss of generality, we can suppose that $k=l\neq m$. Then $S(A_1,0)=\{A_1\}$, $S(A_1,2\sqrt{2k^2})=\{A_2\}$, $S(A_1,2\sqrt{k^2+m^2})=\{A_3,A_4\}$.
There is no problem with spheres with one element. Hence, it suffices to prove that $\I(A_1)$ acts transitively on
$S(A_1,2\sqrt{k^2+m^2})=\{A_3,A_4\}$. Let us consider the map $\eta:(x_1,x_2,x_3) \mapsto (x_2, x_1,x_3)$, which is an isometry of $\mathbb{R}^3$.
Since $k=l$, then  $\eta \in \Isom (P)$, $\eta(A_1)=A_1$, $\eta(A_2)=A_2$, $\eta(A_3)=A_4$, $\eta(A_4)=A_3$. Hence, $\I(A)$ acts transitively
on $S(A_1,2\sqrt{k^2+m^2})=\{A_3,A_4\}$ and $P$ is $2$-point homogeneous in this case too.

Now we are going to prove that $P$ is $3$-point homogeneous. Recall that we already assume that $P$ is not a regular tetrahedron (this variant has been considered).
Let us consider two triples $(B_1,B_2,B_3)$,  $(C_1,C_2,C_3)$ of vertices of $P$
such that $d(B_i,B_j)=d(C_i,C_j)$, $i,j=1,2,3$, and prove that there is an isometry $\theta \in \Isom(P)$ such that
$\theta (B_i)=C_i$, $i=1,2,3$. If there are $i\neq j$ such that $B_i=B_j$, then we may omit $B_j$ and $C_j$ in the above triples
and find an isometry $\kappa$, moving the first obtained pair of vertices to the second obtained pair (this is possible since $P$ is $2$-point homogeneous).
It is clear that we may take $\kappa$ as $\theta$.
Now, let us assume that $B_1\neq B_2 \neq B_3\neq B_1$. This means that $B_1, B_2, B_3$ are vertices of one face of $P$.
The same can be said about $C_1,C_2,C_3$. Let us consider $B_4$, the fourth vertex of $P$, additional to $B_1, B_2, B_3$,
and $C_4$, the additional vertex to $C_1, C_2, C_3$. Since $P$ is homogeneous, there is an isometry $\psi \in \Isom(P)$ such that
$\psi(B_4)=C_4$. If the numbers $k,l,m$ are pairwise distinct, then $\psi(B_i)=C_i$, $i=1,2,3$.
Hence, we may take $\psi$ as $\theta$. Finally, we suppose that there are coinciding numbers among $k,l,m$ (but not $k=l=m$).
Under this assumption, it is clear that either  $\psi(B_i)=C_i$ for $i=1,2,3$ (in this case we take $\theta:=\psi$), or
$B_1B_2B_3$ is an isosceles triangle which has the isometry group of order $2$.
We may suppose that $d(B_1,B_2)=d(B_1,B_3)$.
Let us take $\varphi \in \Isom(P)$ such that $\varphi(B_1)=B_1$, $\varphi(B_2)=B_3$, $\varphi(B_3)=B_2$.
Now, we can take $\theta:= \psi \circ \varphi$, and this isometry moves $(B_1,B_2,B_3)$ to $(C_1,C_2,C_3)$.
Therefore, $P$ is $3$-point homogeneous.

By Corollary \ref{co:if-homog}, we observe  that $P$ is $n$-point homogeneous for all $n \in \mathbb{N}$.
\end{proof}

\begin{prop}\label{prop.n6}
Let $P$ be a $2$-point homogeneous polyhedron in $\mathbb{R}^3$ with a face $F$ that is a regular $n$-gon with the center $O_1$, $n\geq 5$.
Then $P$ is invariant under the rotation around the straight line $OO_1$ by the angle $2\pi/n$ and under the reflection with respect to the plane through
the points $O$, $O_1$, and any vertex of $F$.
\end{prop}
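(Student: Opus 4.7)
The plan is to derive both claims from Proposition \ref{prop.n2} together with a face--intersection argument that sharpens the dichotomy it gives. First, I will establish the following lemma: if $A_j,A_k$ are two vertices of $F$ whose connecting segment $[A_j,A_k]$ is not an edge of $P$ (for instance, any diagonal of $F$), then any isometry $\sigma\in\Isom(P)$ swapping $A_j$ and $A_k$ must be the reflection $\varphi_{A_j,A_k}$ and not the rotation $\psi_{A_j,A_k}$. Indeed, by Proposition \ref{prop.n2} these are the only possibilities. If $\sigma=\psi_{A_j,A_k}$, then $\psi(F)$ is a face of $P$ containing both $A_j$ and $A_k$; since two distinct faces of a convex polyhedron intersect in at most an edge, while a diagonal of $F$ has its interior lying in the relative interior of $F$ and so cannot be a common edge of two faces, one forces $\psi(F)=F$. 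On the other hand, provided the midpoint $E$ of $[A_j,A_k]$ differs from $O_1$, the axis $OE$ is neither perpendicular to nor contained in the plane of $F$, so $\psi$ does not preserve that plane --- contradiction.

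Applying the lemma to the chord $[A_{i-1},A_{i+1}]$, whose midpoint lies strictly between $O_1$ and $A_i$ on the line $O_1A_i$ and so is distinct from $O_1$, I obtain $\varphi_{A_{i-1},A_{i+1}}\in\Isom(P)$. I then identify its mirror plane with the plane through $O,O_1,A_i$ by noting that each of these three points is equidistant from $A_{i-1}$ and $A_{i+1}$ (being, respectively, the center of the circumsphere of $P$, the center of the circumscribed circle of $F$, and the vertex of the regular $n$-gon $F$ lying halfway between $A_{i-1}$ and $A_{i+1}$), hence all three lie on the perpendicular bisector plane of $[A_{i-1},A_{i+1}]$. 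This yields the reflection claim.

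For the rotation claim, I compose the two reflections through $OO_1A_i$ and $OO_1A_{i+1}$. Both planes contain the line $OO_1$, so the composition is a rotation around $OO_1$; its angle equals twice the dihedral angle between the planes, which in turn equals $\angle A_iO_1A_{i+1}=2\pi/n$, giving a rotation by $4\pi/n$ around $OO_1$. For $n$ odd, $\gcd(4,n)=1$ and this already produces the rotation by $2\pi/n$. For $n$ even, I run the lemma once more with the chord $[A_{i-1},A_{i+2}]$; when $n\neq 6$ its midpoint lies at distance $r\cos(3\pi/n)\neq 0$ from $O_1$, so the lemma produces the reflection $\varphi_{A_{i-1},A_{i+2}}$ through the plane $OO_1M_i$, where $M_i$ is the midpoint of the edge $[A_i,A_{i+1}]$. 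Composing this edge reflection with $\Pi_i$ (whose in-plane axes meet at angle $\pi/n$) yields the rotation by $2\pi/n$. In the remaining case $n=6$ the midpoint of $[A_{i-1},A_{i+2}]$ equals $O_1$, so $\psi$ is the rotation by $\pi$ around $OO_1$ itself; whichever of $\varphi$ or $\psi$ is the isometry supplied by Proposition \ref{prop.n2}, combining it with the rotation by $4\pi/6=2\pi/3$ already obtained generates the rotation by $\pi/3=2\pi/n$.

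The step I expect to need the most care is the face--intersection portion of the lemma, specifically the assertion that a diagonal of a face cannot be an edge of the polyhedron, together with the accompanying verification that for $n\ge 5$ the rotation $\psi$ fails to preserve the plane of $F$ whenever $E\neq O_1$. Once the lemma is set up precisely, the rest of the argument is a short composition of reflections together with a small arithmetic case analysis over $n$ odd, $n$ even with $n\neq 6$, and $n=6$ handled separately.
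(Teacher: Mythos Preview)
Your argument is correct, but it takes a considerably longer route than the paper's.  The paper does not restrict itself to the ``swap'' isometries of Proposition~\ref{prop.n2}.  For the rotation it invokes $2$-point homogeneity directly: since $d(A_{i-1},A_{i+1})=d(A_i,A_{i+2})$, there is $\psi\in\Isom(P)$ with $\psi(A_{i-1})=A_i$ and $\psi(A_{i+1})=A_{i+2}$; the face $\psi(F)$ contains the diagonal $[A_i,A_{i+2}]$, hence $\psi(F)=F$ by exactly the face--intersection reasoning you isolate in your lemma, and the only isometry of the regular $n$-gon sending $A_{i-1}\mapsto A_i$, $A_{i+1}\mapsto A_{i+2}$ is the rotation by $2\pi/n$.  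This gives the rotation in one stroke, with no parity analysis and no separate treatment of $n=6$.  For the reflection the paper argues the same way with the swap $(A_{i-1},A_{i+1})\mapsto(A_{i+1},A_{i-1})$, again using face--uniqueness to force $\eta(F)=F$.

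What you gain is a self-contained reduction to Proposition~\ref{prop.n2}: your lemma makes explicit why the rotation alternative $\psi_{A_j,A_k}$ is excluded (the axis $OE$ is neither in nor perpendicular to the plane of $F$ when $E\neq O_1$), a point the paper leaves implicit.  What you lose is brevity: composing reflections and then splitting into $n$ odd, $n$ even with $n\neq 6$, and $n=6$ is unnecessary once one realises that $2$-point homogeneity supplies a \emph{translation} of the diagonal, not merely a reversal.  One small wording issue: your lemma should conclude ``$\psi_{A_j,A_k}\notin\Isom(P)$, hence by Proposition~\ref{prop.n2} $\varphi_{A_j,A_k}\in\Isom(P)$'' rather than ``any isometry swapping $A_j$ and $A_k$ must be $\varphi_{A_j,A_k}$'', since in general other swapping isometries may exist.
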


\begin{proof}
Let us consider four consecutive vertices $A_{i-1}, A_i, A_{i+1}, A_{i+2}$ of $F$. Since $d(A_{i-1},A_{i+1})=d(A_{i},A_{i+2})$, then there is an isometry $\psi$ of $P$ such
that $\psi(A_{i-1})=A_{i}$ and $\psi(A_{i+1})=A_{i+2}$. It is clear that $\psi (F)$ is a face of $P$ with vertices $A_{i}$ and $A_{i+2}$, therefore, $\psi (F)=F$
(there is only one face of $P$ with points $A_{i}$ and $A_{i+2}$). Hence, $\psi$ acts on $F$ by the rotation by the angle $2\pi/n$, proving
the first assertion. Further, since $P$ is $2$-point homogeneous, there is an isometry $\eta$ of $P$ such
that $\eta(A_{i-1})=A_{i+1}$ and $\eta(A_{i+1})=A_{i-1}$. It is clear that $\eta (F)=F$ is a face of $P$ with vertices $A_{i-1}, A_{i+1}$.
Therefore, $\eta$ acts on $F$ by the reflection with respect to the
plane through the points $O$, $O_1$, $A_i$
that proves
the second assertion.
\end{proof}

\begin{prop}\label{prop.n7}
Let $P$ be a $2$-point homogeneous polyhedron in $\mathbb{R}^3$ with a face $F$ that is a rectangle with the center $O_1$.
Then $P$ is invariant under the rotation around the straight line $OO_1$ by the angle $\pi$ and under the reflection with respect to the plane through the points $O$, $O_1$,
and the midpoint of any edge of $F$.
\end{prop}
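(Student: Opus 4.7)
The plan is to follow the template of the proof of Proposition~\ref{prop.n6}, but with two complications: the vertices to be swapped are now endpoints of an edge of $F$ (so the argument that $F$ is the unique face containing both of them no longer applies directly), and the rectangle has fewer symmetries than a regular polygon. I would begin by identifying the plane through $O$, $O_1$, and the midpoint $M$ of an edge $[A_i,A_{i+1}]$ of $F$ with the perpendicular bisector plane $\pi_{A_i,A_{i+1}}$: since $F$ is inscribed in the circumscribed sphere of $P$ about $O$ (Proposition~\ref{pr.efhs}), the direction $\overrightarrow{OO_1}$ is perpendicular to the plane of $F$; and since $O_1$ is equidistant from $A_i$ and $A_{i+1}$, the direction $\overrightarrow{O_1M}$ is perpendicular to $A_iA_{i+1}$ within that plane. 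Being linearly independent (as $O\neq O_1$), these two directions span the plane through $M$ orthogonal to $A_iA_{i+1}$, which is precisely $\pi_{A_i,A_{i+1}}$.

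Next, I would invoke $2$-point homogeneity to pick $\eta\in\Isom(P)$ with $\eta(A_i)=A_{i+1}$ and $\eta(A_{i+1})=A_i$. By Proposition~\ref{prop.n2}, $\eta$ is either the reflection $\varphi$ in the plane $OO_1M$ (exactly what we want) or the half-turn $\psi$ about the line $OM$. If $\eta=\varphi$, we are done for this edge. If $\eta=\psi$, I would apply the same dichotomy to the neighboring edge $[A_{i+1},A_{i+2}]$ with midpoint $M'$, producing either the reflection $\varphi'$ in the plane $OO_1M'$, or the half-turn $\psi'$ about $OM'$. Since $O_1M\perp O_1M'$ inside the rectangle $F$, the two planes $OO_1M$ and $OO_1M'$ are mutually perpendicular and intersect exactly along the line $OO_1$; hence $\varphi\circ\varphi'$ equals the half-turn $\rho$ about $OO_1$, which belongs to $\Isom(P)$ by the first assertion. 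Therefore the presence of either $\varphi$ or $\varphi'$ in $\Isom(P)$ forces the other via composition with $\rho$, and it is enough to realize at least one of these two reflections.

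The main obstacle, and the step I expect to require the most care, is ruling out the residual scenario in which both adjacent edge swaps are half-turns. In that scenario $\rho$, $\psi$, and $\psi'$ are all orientation-preserving rotations fixing $O$. I would place coordinates with $O_1$ at the origin, the plane of $F$ as the $xy$-plane, $OO_1$ along the $z$-axis, $O=(0,0,c)$, and half-side lengths $a,b$ of $F$; a direct computation then identifies $\psi\rho$ as the rotation by the angle $2\arctan(b/c)$ about a line through $O$ parallel to the $x$-axis, and $\psi'\rho$ as the rotation by $2\arctan(a/c)$ about a line through $O$ parallel to the $y$-axis. Finiteness of $\Isom(P)$ forces both angles to be rational multiples of $\pi$; chasing the orbit of $A_{i+2}$ under $\psi$ (whose explicit image $(-a,b(3c^2-b^2)/(b^2+c^2),4b^2c/(b^2+c^2))$ lies off the plane of $F$ but on the circumscribed sphere of radius $\sqrt{a^2+b^2+c^2}$) and then iterating with $\rho$ and $\psi'$ generates a group of rotations whose orbits must stay inside the finite vertex set of $P$. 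Comparing the images forced on the vertices of $F$ and $\psi(F)$ with the constraint that every face containing $[A_i,A_{i+1}]$ is either $F$ itself or the one adjacent face across that edge then produces a contradiction, leaving Case A as the only possibility for at least one adjacent edge and completing the proof via the composition argument of the preceding paragraph.
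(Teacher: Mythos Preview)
Your approach has a structural gap and a missed simplification that the paper exploits.

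First, the structural issue: you invoke ``the first assertion'' (that the half-turn $\rho$ about $OO_1$ lies in $\Isom(P)$) in the middle of your argument for the reflections, but you never actually prove that assertion. Your composition argument ``$\varphi$ or $\varphi'$ forces the other via $\rho$'' is therefore circular as written. Moreover, in the mixed case $\eta=\psi$, $\eta'=\varphi'$, you do not explain why $\psi\circ\varphi'$ (or any available composition) would yield $\varphi$; it is not the same as having both reflections and composing them.

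Second, and more importantly, the ``main obstacle'' you identify --- both edge-swaps being half-turns --- is only an obstacle because you chose to swap \emph{adjacent} vertices. The paper sidesteps the whole difficulty by using the \emph{diagonals} of the rectangle instead. With consecutive vertices $A,B,C,D$ of $F$, the equality $d(A,C)=d(B,D)$ and $2$-point homogeneity give $\psi\in\Isom(P)$ with $\psi(A)=B$, $\psi(C)=D$. Since $[B,D]$ is a diagonal and not an edge, $F$ is the \emph{unique} face containing both $B$ and $D$, so $\psi(F)=F$ immediately. Then $\psi$ fixes $O$, $O_1$, and the midpoint of $[A,B]$ (three non-collinear points), hence fixes their plane pointwise and is the desired reflection. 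The rotation by $\pi$ comes similarly from an isometry swapping the diagonal pair $(A,C)$. No dichotomy from Proposition~\ref{prop.n2}, no orbit-chasing, and no rationality constraints are needed.

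Your final paragraph (``produces a contradiction'') is only a sketch: the claim that finiteness forces $2\arctan(a/c)$ and $2\arctan(b/c)$ to be rational multiples of $\pi$ is correct, but you have not shown that the resulting orbit constraints are actually inconsistent with being a convex polyhedron. That step would require substantial additional work, all of which is avoided by the diagonal trick.
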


\begin{proof}
Let us consider the four vertices $A, B, C, D$ of $F$ consecutively. Since $d(A,C)=d(B,D)$, there is an isometry $\psi$ of $P$ such
that $\psi(A)=B$ and $\psi(C)=D$. It is clear that $\psi (F)$ is a face of $P$ with vertices $B, D \in F$, therefore, $\psi (F)=F$, $\psi(B)=A$, and $\psi(D)=C$.
Thus $\psi$ acts on $F$ by the reflection with respect to the straight line through
the midpoints of the segments $[A,B]$ and $[C,D]$ (these midpoints are fixed by $\psi$).
If we change vertices by the rule  $A \mapsto B \mapsto C\mapsto D\mapsto A$, then we
get an isometry of $P$, that preserves $F$ and acts on $F$ by the reflection with respect to the straight line through
the midpoints of the segments $[B,C]$ and $[D,A]$.
Moreover, since $P$ is $2$-point homogeneous, there is an isometry $\eta$ of $P$ such
that $\eta(A)=C$ and $\eta(C)=A$. It is clear that $\eta (F)=F$, since the vertices $A$ and $C$ are only in one face of $P$.
Therefore, $\eta$ acts on $F$ by  the rotation  to the angle $\pi$, that proves all assertions of the proposition.
\end{proof}

\begin{lemma}\label{two.faces}
Let us suppose that a $2$-point homogeneous polyhedron $P$ in $\mathbb{R}^3$ has faces $F_1$ and $F_2$ with a common side,
such that $F_1$ is a rectangle and $F_2$ is a regular triangle. Then if $F_1$ is (is not) a square, then $P$ is a cuboctahedron
($P$ is a right prism over a regular triangle, respectively).
\end{lemma}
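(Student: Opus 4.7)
Let $a = |AB|$ denote the length of the common edge of $F_1$ and $F_2$, and write $F_2 = \triangle ABE$ (equilateral with side $a$) and $F_1 = ABCD$ (a rectangle with $|AD| = |BC| = b$ for some $b > 0$). Suppose first that $F_1$ is a square, i.e. $a = b$. Then every edge of $F_1$ and of $F_2$ has length $a$, so every edge of $P$ incident to $A$ has length $a$, and vertex-transitivity forces all edges of $P$ to have length $a$. By Theorem~\ref{th:edge}, $P$ is either a regular polyhedron or the cuboctahedron; since $P$ has both a triangular and a quadrilateral face, no regular polyhedron is possible, so $P$ is the cuboctahedron.

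Now assume $a \neq b$. The plan is to show that at $A$ exactly three edges of $P$ meet, which will force $P$ to be a right prism. Using 2-point homogeneity applied to the pair $B, E$ (both at distance $a$ from $A$), there is $\sigma \in \I(A)$ exchanging $B$ and $E$; it preserves the unique triangle $F_2$ at $\{A, B, E\}$ and sends $F_1$ to the face $G$ across edge $[A, E]$ from $F_2$, so $G$ is a rectangle isometric to $F_1$. Let $F$ be the vertex of $G$ adjacent to $A$ other than $E$, so $|AF| = b$; the heart of the proof is the claim $F = D$. Suppose on the contrary that $F \neq D$; then between $F_1$ and $G$, on the side of $A$ opposite $F_2$, there is a fourth face $F_3$ at $A$ with two length-$b$ edges $[A, D]$ and $[A, F]$. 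By Proposition~\ref{prop.n3}, $F_3$ is a triangle, a rectangle, or a regular $n$-gon with $n \geq 5$. The convexity constraint $\pi/3 + \pi/2 + \gamma + \pi/2 < 2\pi$ on the $A$-angle $\gamma$ of $F_3$ restricts $\gamma \in \{\pi/3,\, \pi/2,\, 3\pi/5\}$, so $F_3$ is an equilateral triangle, a square, or a regular pentagon. Counting face-vertex incidences and applying the Euler formula then yields $V = 12$, $24$, or $60$, with $P$ combinatorially the cuboctahedron, rhombicuboctahedron, or rhombicosidodecahedron respectively, yet with the $a$- and $b$-length edges distinguished.

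In each of these three sub-cases a contradiction is obtained as follows. Since $a \neq b$, no isometry of $\mathbb{R}^3$ can interchange $a$-edges with $b$-edges, so $\Isom(P)$ is strictly smaller than the symmetry group of the corresponding Archimedean solid; an orbit count then shows that $\I(A)$ has order at most $2$, while the distorted polyhedron carries at least one distance sphere $S(A, r)$ with three or more vertices—most transparently in the cuboctahedral sub-case, where an explicit computation under the residual $T_d$-symmetry exhibits four equidistant vertices on which an order-$2$ isotropy cannot act transitively—contradicting Proposition~\ref{pr:two-point homogeneous}. Hence $F = D$, so at $A$ exactly three faces meet in the cyclic order rectangle, triangle, rectangle. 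Vertex-transitivity replicates this configuration at every vertex, and Proposition~\ref{prop.n7} applied to $F_1$ produces a second equilateral triangle parallel to $F_2$ on the opposite side of $P$, exhibiting $P$ as a right prism over the equilateral triangle $F_2$. The main obstacle is the sub-case verification in Step~2: one must explicitly rule out each of the three distorted Archimedean candidates by tracking the residual isometry group and its action on distance spheres, and in particular by producing a distance sphere too large for the reduced isotropy to act transitively.
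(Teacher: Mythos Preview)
Your square case ($a=b$) is correct and in fact cleaner than the paper's argument: invoking Theorem~\ref{th:edge} directly is more efficient than the paper's hands-on analysis of face adjacencies.

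In the non-square case, however, there is a genuine gap, which you yourself flag in the final paragraph. After reducing to the three combinatorial candidates (distorted cuboctahedron, rhombicuboctahedron, rhombicosidodecahedron), you do not carry out the verification that none of them can be $2$-point homogeneous. The sketch ``isotropy has order at most $2$ while some distance sphere has at least $3$ points'' is plausible but not executed: for the rhombicuboctahedral and rhombicosidodecahedral candidates you give no argument at all, and even in the cuboctahedral sub-case the claim that the residual symmetry group is $T_d$ presupposes that the polyhedron is precisely a $TC_3(c)$ (which the paper does treat, in Proposition~\ref{ex.trcube.1}, but you neither prove this identification nor cite that result). A smaller point: you tacitly assume that exactly one face $F_3$ sits between $F_1$ and $G$ on the side opposite $F_2$; ruling out two faces there needs a short angle-sum argument you do not supply.

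The paper sidesteps this entire case analysis. Rather than working with the isotropy at a vertex, it considers the isometry $\theta$ interchanging two \emph{non-adjacent} vertices $A$ and $E$ (in its labeling $F_1=ABCD$, $F_2=CDE$), observes that $\theta$ must send $F_2$ to the unique triangular face $F_5$ at $A$, and then tracks where $\theta$ sends the rectangle $F_1$: it must go to one of the two rectangles $F_3$, $F_4$ adjacent to $F_2$ along $DE$ and $CE$. Either way one reads off $\{\theta(C),\theta(D)\}\subset\{K,L,M,N\}$, while separately $B\in\{\theta(C),\theta(D)\}$ and $B\notin\{K,L,M,N\}$, an immediate contradiction. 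This is a single short combinatorial argument with no sub-cases; it is worth comparing with your plan to see how choosing the right isometry collapses what looks like a lengthy verification.
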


\begin{proof}
Let us suppose that ${F}_1$ has consecutive vertices $A,B,C,D$ and  ${F}_2$ has vertices $C,D,E$,
where $d(A,B)=d(C,D)=d(C,E)=d(D,E)=a$, $d(A,D)=d(B,C)=b$.

At first, we suppose that $b=a$. Since $P$ is $2$-point homogeneous, then any side of a face $F$,
isometric to $F_2$, is also a side of a squared face isometric to $F_1$ (there is an isometry of $P$ that moves the segment $[C,D]$ to a given side),
as well as any side of  a face $F$, that isometric to $F_1$, is also a side of a triangular face isometric to $F_2$.
It is easy to see that there are exactly four faces at any vertex, two of them are isometric to $F_1$ (and they have no common side)
and two other are isometric to $F_2$. Due to the $2$-point homogeneity, we get that $P$ is a cuboctahedron.

Now, suppose that  $b\neq a$. Since $P$ is homogeneous, then any side of $F_2$ is also a side of a rectangular face isometric to $F_1$.
Denote these faces by $F_3$ (with consecutive vertices $D,E,M,N$)
and $F_4$ (with consecutive vertices $C,E,K,L$). Since $P$ is $2$-point homogeneous,
then there is an isometry $\eta$ of $P$ that maps the pair of vertices $(C,D)$ to the pair $(A,B)$.
We denote by $F_5$ the image of $F_2$ under this isometry (two of three vertices of $F_5$ are $A$ and $B$).
Finally, we denote by ${F}_6$ a face with the side $[A,D]$.
We emphasize that $F_1$, $F_2$, $F_3$, and $F_6$ are all faces containing the vertex $D$.

If $F_6=F_3$ (or, equivalently, $N=A$), then $P$ is a right prism over a regular triangle.

Now, suppose that $F_6\neq F_3$.
We consider an isometry $\theta$ of $P$ such that $\theta(A)=E$ and $\theta(E)=A$ (it does exist due to the $2$-point homogeneity of $P$).
It is clear that
$\theta(F_2)=F_5$. Note that $F_5$ is a unique face of $P$ that contains $A$ and is isometric to $F_2$.
Otherwise, the sum of the plane angles of the faces at the chosen vertex (e.~g., $A$ or $D$) would be greater than $2\pi$
(there are at least three right angles and at least two angles with radian measure $2\pi/3$).
Therefore, either $\theta(C)=B$ or $\theta(D)=B$, in particular, $B \in \{\theta(C),\theta(D)\}$.

Since there are exactly two faces with the vertex $E$,
that are isomorphic to $F_1$, namely, $F_3$ and $F_4$, then either $\theta(F_1)=F_3$ or $\theta(F_1)=F_4$. In the first case we get
$\theta(B)=D$, $\theta(D)=M$, $\theta(C)=N$; in the second case we have $\theta(B)=C$, $\theta(D)=L$, $\theta(C)=K$.
In any case, $\{\theta(C),\theta(D)\} \subset \{K,L,M,N\}$.

On the other hand, it is easy to see that $B \not \in \{K,L,M,N\}$, therefore, the isometry $\theta$ does not exist.
This contradiction proves the lemma.
\end{proof}

\begin{prop}\label{prop.n8}
Let us suppose that a $2$-point homogeneous polyhedron $P$ in $\mathbb{R}^3$ has rectangular faces $F_1$ and $F_2$, that have a common side.
Then $P$ is either a rectangular parallelepiped, or a right prism over a regular polygon.
\end{prop}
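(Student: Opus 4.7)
My plan is to examine the local structure at one endpoint $D$ of the edge shared by $F_1$ and $F_2$, to prove that exactly three faces meet at $D$, and then to let the type of the third face determine $P$. Label $F_1=ABCD$ and $F_2=CDEF$ in cyclic order, so that $[CD]$ is the common edge and the edges at $D$ coming from $F_1\cup F_2$ are $[DA]$, $[DC]$, $[DE]$ with $[DA]\perp[DC]$ and $[DC]\perp[DE]$; in particular $[DA]$ and $[DE]$ both lie in the plane $\Pi$ through $D$ perpendicular to $[DC]$. The faces $F_1,F_2$ already contribute $\pi$ to the angle sum at $D$, so the remaining face angles at $D$ sum to strictly less than $\pi$ by Descartes's theorem.

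The first step is to rule out four or more faces at $D$. By Corollary~\ref{co.2-pont.f} and Theorem~\ref{th.2dim.1}, every non-triangular face of $P$ is either a rectangle or a regular polygon. Assume that at least four faces meet at $D$. Since a rectangle contributes angle $\pi/2$ and a regular $n$-gon with $n\geq 5$ contributes at least $3\pi/5$, at most one of the additional faces (beyond $F_1,F_2$) can be non-triangular; hence among the additional faces there is at least one triangle $G$ adjacent to $F_1$ (along $[DA]$) or to $F_2$ (along $[DE]$). If $G$ is a regular triangle, Lemma~\ref{two.faces} identifies $P$ as the cuboctahedron or as a right prism over a regular triangle; the cuboctahedron is excluded because its square faces do not share edges with each other, while the right prism over a regular triangle has only three faces per vertex, contradicting the assumption of at least four at $D$. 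If $G$ is a non-regular triangle, Proposition~\ref{prop.n4} forces $P$ to be a tetrahedron or a right antiprism, but neither contains two rectangles sharing an edge. All cases contradict the hypothesis, so exactly three faces $F_1,F_2,F_3$ meet at $D$, and the two edges of $F_3$ at $D$ must be $[DA]$ and $[DE]$.

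The proof concludes by considering the type of $F_3$. If $F_3$ is a non-regular triangle, Proposition~\ref{prop.n4} again gives a contradiction. If $F_3$ is a regular triangle, Lemma~\ref{two.faces} applied to $F_1,F_3$ gives the cuboctahedron (excluded as above) or a right prism over a regular triangle, so $P$ is the latter. If $F_3$ is a rectangle, then $[DA]\perp[DE]$ in $F_3$; combined with $[DA]\perp[DC]\perp[DE]$ this produces three mutually perpendicular edges at every vertex of $P$, whence $P$ is a rectangular parallelepiped. Finally, if $F_3$ is a regular $n$-gon with $n\geq 5$, its plane contains $[DA]$ and $[DE]$ and therefore coincides with $\Pi$; thus $F_3$ lies in the plane through $D$ perpendicular to $[DC]$, and by homogeneity the analogous third face $F_3'$ at $C$ lies in the parallel plane through $C$ perpendicular to $[DC]$, with $[DC]$ serving as a lateral edge perpendicular to both base planes, so $P$ is the right prism over the regular $n$-gon $F_3$.

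The main obstacle is the exclusion of four or more faces at $D$ in the second paragraph: the angle budget at $D$ is rather generous, and one must combine it carefully with Lemma~\ref{two.faces} and Proposition~\ref{prop.n4}, using crucially that our hypothesis of two adjacent rectangles rules out both the cuboctahedron and the family of right antiprisms. Once the count of three faces per vertex is established, the final case analysis is essentially forced by whether the edges $[DA]$ and $[DE]$ in $F_3$ are perpendicular (parallelepiped case) or form the interior angle of a regular polygon (prism case).
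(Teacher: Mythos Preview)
Your proof is correct and takes a genuinely different route from the paper's. The paper argues globally: it uses the reflection symmetries from Proposition~\ref{prop.n7} to propagate the pair $(F_1,F_2)$ into a closed belt $F_1,F_2,\dots,F_{k-1}$ of rectangles whose union is already the lateral surface of a right prism over a homogeneous polygon $\Gamma_1$; it then shows (via Lemma~\ref{two.faces} and an angle count) that $P$ has no vertices outside this belt, and finally invokes $2$-point homogeneity once more to force $\Gamma_1$ to be regular or a non-square rectangle. You instead work entirely at the single vertex $D$: you first pin down the vertex degree as three (via the angle defect combined with Lemma~\ref{two.faces} and Proposition~\ref{prop.n4}), and then let the combinatorial type of the third face $F_3$ dictate $P$. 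Your argument is more elementary in that it never needs the explicit reflections of Proposition~\ref{prop.n7}, and it makes sharper use of Proposition~\ref{prop.n4}; the paper's approach has the advantage that the prism structure materializes directly from the belt without a further case split on $F_3$.

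One presentational point: your last two cases are too brisk. When $F_3$ is a rectangle, the implication ``three mutually perpendicular edges at every vertex $\Rightarrow$ rectangular parallelepiped'' is true but deserves a line (e.g.\ Euler's formula gives $V=8$, or one propagates coordinates face by face). When $F_3$ is a regular $n$-gon with $n\geq 5$, knowing only that $F_3$ and the analogous face $F_3'$ at $C$ sit in parallel planes perpendicular to $[DC]$ does not yet exclude a twist between the two bases or extra vertices; you should observe that at every vertex of $F_3$ the unique edge not in $F_3$ is perpendicular to the plane $\Pi$ of $F_3$ (since both rectangular faces at that vertex contain it together with a side of $F_3$), so all lateral edges are parallel, of equal length, and terminate on a translate of $F_3$, whence $P$ is the right prism by the vertex count $V=2n$. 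These are easily filled, and the overall strategy is sound.
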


\begin{proof}
Denote the centers of $F_1$ and $F_2$  by  $O_1$ and $O_2$ respectively, and
denote the common edge of $F_1$ and $F_2$ by $e_1$. We consider planes $L_1$ and $L_2$ through the endpoints of the edge $e_1$ and orthogonal to $e_1$.

By Proposition \ref{prop.n7}, $P$ is invariant
under the reflection $\psi_1$ with respect to the plane through the straight line $OO_2$ which is parallel to $e_1$.
Let us consider $F_3:=\psi_1(F_1)$, $O_3:=\psi_1(O_1)$, and $e_2:=\psi_1(e_1)$. It is clear that the rectangular faces $F_2$ and $F_3$ have a common edge $e_2$ which is
parallel to $e_1$. Repeating this procedure
(the next isometry $\psi_2$ is the reflection with respect to the plane through the straight line $OO_3$ which is parallel to $e_1$
and $F_4:=\psi_2(F_2)$, etc.),
we get  the chain of faces $F_1,F_2,F_3,\dots$, such that $F_i\bigcap F_{i+1}=e_i$ is an edge of $P$ parallel to $e_1$.
Since $P$ is convex and all $F_i$ are rectangles, then there is $k\in \mathbb{Z}$, such that $F_k=F_1$ (it is clear that $k \geq 4$).

Denote by $\Gamma_1$ and $\Gamma_2$ the convex  hulls of
$\mathcal{B} \bigcap L_1$ and $\mathcal{B}\bigcap L_2$ respectively, where $\mathcal{B}: =\bigcup_{i=1}^{k-1} F_i$.
It is clear that $\Gamma_1$ is isometric to $\Gamma_2$ (the corresponding isometry can be realized by an isometry $\eta$ of $P$ in the proof of Proposition  \ref{prop.n7})
and is an inscribed polygon, the lengths of whose $k-1$ sides are either equal or alternate. Therefore, $\Gamma_1$ is
a homogeneous $(k-1)$-gon.

We are going to prove that $P$ is the convex hull of $\mathcal{B}=\bigcup_{i=1}^{k-1} F_i$.
It is clear that the convex hull of $\mathcal{B}$ is a right prism  over $\Gamma_1$ (of height that is the length of $e_1$).
In what follows, we consider also the edges $f_i:=L_1 \bigcap F_i$, $i=1,2,\dots,k-1$.

{\it Let us suppose that $P$ is not the convex hull of } $\mathcal{B}$. Then there is a vertex of $P$ which is not situated between the planes $L_1$ and $L_2$.
Without loss of generality, we may assume that there is a vertex of $P$ such that it and $L_2$ lie in different half-spaces determined by $L_1$.
Hence for any side $f_i$ of $\Gamma_1$ there is a face of $P$, that contains $f_i$ and that is not a subset of $\mathcal{B}$.

Now, consider the vertex $A:=f_1 \bigcap f_2$. There are at least four faces of $P$ with this vertex: the rectangles $F_1$ and $F_2$,
the second face $F'_1$ that contains $f_1$,
and the second face $F'_2$ that contains  $f_2$. The sum of the angles of $F_1$ and $F_2$ at $A$ is $\pi$.
Therefore, the impacts of $F'_j$, $j=1,2$, are less than $\pi$.
By Proposition \ref{pr.idex.2}, we get that $F'_j$, $j=1,2$, can be only a homogeneous $m_j$-gon with $m_2 \geq m_1 \geq 3$
(we assume $m_2\geq m_1$ without loss of generality). Each angle of such a polygon
is $\pi (1-2/m_j)$. Since $\pi(1-2/m_1)+\pi(1-2/m_2)<\pi$ (the sum of all angles of all faces at the vertex $A$ is less than $2\pi$), we get $1/m_1+1/m_2>1/2$.
Therefore, $m_1=3$ and $m_2 \in\{3,4,5\}$. This means that $F'_1$
is a regular triangle and $F'_2$ is either regular triangle, or a regular pentagon, or a rectangle.

Since $F'_1$ has a common side with $F_1$, then
by Lemma \ref{two.faces}, $P$ is a right prism over a regular triangle if $F_1$ is not a square and $P$ is a cuboctahedron  if $F_1$ is a square.

Note that  in the first case $P$ is the convex hull of $\mathcal{B}$, but in the second case a cuboctahedron has no two rectangular faces with a common side.
This contradiction proves that  $P$ {\it is the convex hull of} $\mathcal{B}=\bigcup_{i=1}^{k-1} F_i$.

It is obvious that $\Gamma_1$ is a regular triangle for $k=4$  (since $\Gamma_1$ is
a homogeneous $(k-1)$-gon), and it is a rectangle for $k=5$, because all its angles are pairwise equal.
In these cases $\Gamma_1$ is $2$-point homogeneous.
In other cases ($k\geq 6$), there are
consecutive vertices $A,B,C,D,E$ of $\Gamma_1$ such that $E\neq A$, $[A,B]$ is an edge of $F_1$ and $[B,C]$ is an edge of $F_2$.
It is clear that $[A,C]$ and $[B,D]$ are
diagonals of $\Gamma_1$, that lie inside $\Gamma_1$ except for their endpoints, and
$d(A,C)=d(B,D)$. Since $P$ is $2$-point homogeneous, then there is an isometry $\psi$ of
$P$ such that $\psi(A)=B$, $\psi(C)=D$. Since $\Gamma_1$ is a unique face of $P$ that contains $[B,D]$, then
$\psi(\Gamma_1)=\Gamma_1$ and $\psi(B)=C$ which implies that  $F_1$ is isometric to $F_2$. Therefore, $\Gamma_1$ is regular.

If $\Gamma_1$ is regular (equivalently, if $F_1$ is isometric to $F_2$), then $P$ is a right prism over a regular polygon.
Otherwise, $\Gamma_1$ is a non-square rectangle by Theorem~\ref{th.2dim.1}, hence, $P$ is a rectangular parallelepiped.
\end{proof}
\smallskip

In what follows, we obtain the classification of $2$-point homogeneous (Theorem \ref{th.3dim.2point}) and the classification of $3$-point homogeneous
(Theorem \ref{th.3dim.mpoint}) polyhedra in $\mathbb{R}^3$.
\smallskip

\begin{theorem}\label{th.3dim.2point}
Let $P$ be a $2$-point homogeneous non-degenerate polyhedron in $\mathbb{R}^3$. Then one of the following properties holds:

1) $P$ is a regular polyhedron (tetrahedron, cube, octahedron, dodecahedron, or icosahedron);

2) $P$ is a cuboctahedron;

3) $P$ is a homogeneous tetrahedron  with four pairwise isometric faces that are acute triangles;

4) $P$ is a right prism over a regular $n$-gon, $n\geq 3$, such that the set of distances between vertices of $P$ from a fixed base has empty
intersection with the set of distances between vertices of $P$ from distinct bases;

5) $P$ is a right antiprism over a regular $n$-gon, $n\geq 2$, such that the set of distances between vertices of $P$ from a fixed base has empty
intersection with the set of distances between vertices of $P$ from distinct bases;

6) $P$ is a rectangular parallelepiped of size $a\times b \times c$, where $a\leq b \leq c$ and $a^2+b^2\neq c^2$.
\end{theorem}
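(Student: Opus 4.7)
The plan is to prove the classification by a case analysis on the types of faces of $P$. By Proposition \ref{prop.n3}, every face of $P$ is either a triangle, a rectangle, or a regular $n$-gon with $n\geq 5$. I will split the argument into three exhaustive cases: (I) $P$ has a non-regular triangular face; (II) every triangular face of $P$ is regular but $P$ has a rectangular face that is not a square; (III) every face of $P$ is a regular polygon (so every triangle is equilateral and every quadrilateral is a square).

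For Case (I), Proposition \ref{prop.n4} directly gives that $P$ is either a tetrahedron or a right antiprism over a regular polygon. In the tetrahedron subcase, homogeneity together with Remark \ref{re.hom.simp} implies that the four faces are pairwise isometric acute triangles, so Example \ref{ex.new.1} and Proposition \ref{prop.n6.0} place $P$ in case 3 of the theorem (or case 1, if $P$ is the regular tetrahedron). In the antiprism subcase, Proposition \ref{pr.prism.3.00} asserts that $P$ is $2$-point homogeneous if and only if it is rigidly two-layered or a regular polyhedron, landing $P$ in case 5 (or case 1).

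For Case (II), let $F$ be a rectangular face of $P$ with side lengths $a<b$. By Proposition \ref{prop.n3} and the Case (II) hypothesis, every face adjacent to $F$ is either a regular triangle, a rectangle, or a regular $n$-gon with $n\geq 5$. If some adjacent face is itself a rectangle, Proposition \ref{prop.n8} identifies $P$ as a rectangular parallelepiped or a right prism over a regular polygon, yielding case 6 or case 4 (with the precise distance conditions following from Proposition \ref{pr.prism.3}). If some adjacent face is a regular triangle, Lemma \ref{two.faces} applied with $F$ non-square yields case 4 (a right prism over a regular triangle). The remaining subcase, where every face adjacent to $F$ is a regular $n$-gon with $n\geq 5$, must be ruled out. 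To do so, let $G_1$ and $G_2$ be the regular $n_1$- and $n_2$-gons adjacent to $F$ at a common vertex $v$ along edges of $F$ of lengths $a$ and $b$ respectively, so that the edges of $G_1$ all have length $a$ and the edges of $G_2$ all have length $b$. If the degree of $v$ equals $3$, then $G_1$ and $G_2$ share an edge emanating from $v$ whose length is simultaneously $a$ and $b$, contradicting $a\neq b$. If the degree of $v$ is at least $4$, then the angles of $F,G_1,G_2$ at $v$ sum to at least $\pi/2+2\cdot(3\pi/5)=17\pi/10$, so any further face at $v$ would contribute an angle smaller than $3\pi/10$; but in Case (II) the minimum angle over the permitted face types is $\pi/3>3\pi/10$, a contradiction. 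Excluding this last subcase is the main obstacle of the proof.

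For Case (III), any two adjacent regular polygonal faces share an edge whose length is their common edge length, so connectivity of the face adjacency graph forces all edges of $P$ to have the same length. Theorem \ref{th:edge} then immediately implies that $P$ is either the cuboctahedron or a regular polyhedron, giving case 2 or case 1. Combining the three cases exhausts all possibilities and establishes the classification.
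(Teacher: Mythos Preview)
Your proof is correct and follows the same three-case decomposition as the paper (non-regular triangular face / non-square rectangular face with all triangles regular / all faces regular polygons), dispatching Cases (I) and (III) via Proposition \ref{prop.n4} and Theorem \ref{th:edge} just as the paper does. The only substantive difference is in Case (II). The paper fixes a vertex $A$, studies the cyclic sequence of edge lengths $l_1,\dots,l_k$ around $A$, argues that consecutive rectangles are forbidden (Proposition \ref{prop.n8}) so length changes cannot be adjacent, bounds the number of changes by an angle count, deduces $k=4$ and the vertex figure, and finally lands on a rectangle adjacent to a regular triangle to invoke Lemma \ref{two.faces}. You instead work with the rectangle $F$ directly: if some neighbor of $F$ is a rectangle or a regular triangle you are done by Proposition \ref{prop.n8} or Lemma \ref{two.faces}; otherwise both neighbors of $F$ at a corner $v$ are regular $n$-gons with $n\ge 5$ and distinct edge lengths $a\ne b$, and then either $\deg v=3$ forces these two regular polygons to share an edge of length both $a$ and $b$, or $\deg v\ge 4$ forces an angle deficit below $3\pi/10$, smaller than the minimum angle $\pi/3$ available in Case (II). Your argument is shorter and avoids the edge-sequence bookkeeping; the paper's vertex analysis is a bit more systematic and yields the full vertex figure as a byproduct. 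One minor remark: for the distance conditions in cases 4 and 5 you could cite Proposition \ref{prop.n5n} (the ``if and only if'' for prisms and antiprisms) rather than Proposition \ref{pr.prism.3.00} and Proposition \ref{pr.prism.3} separately, but what you wrote is fine.
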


\begin{proof}
All polyhedra in the list of the theorem are $2$-point homogeneous
according to Theorems \ref{th:reg_pol} (regular polyhedra) and \ref{th:edge} (homogeneous polyhedra with edges of equal length),
Proposition \ref{prop.n6.0} (homogeneous tetrahedra),
Proposition \ref{pr.prism.3} (right prisms) and Proposition \ref{pr.prism.3.00} (right  antiprisms).

Hence, it suffices to show that there are no other $2$-point homogeneous polyhedra.
Suppose that $P$ is $2$-point homogeneous and is not in the statement of the theorem.

Taking into account the results of Theorems \ref{th:reg_pol},
\ref{th:edge}, Example \ref{ex.new.1}, and Propositions  \ref{prop.n5n}, \ref{pr.prism.3}, \ref{prop.n4},
\ref{prop.n8},
we may suppose {\it that no one face of a $2$-point homogeneous polyhedron $P$ is non-regular triangle, $P$ has edges of different lengths, and $P$ is not a right prism}.

If $P$ has no non-square rectangular face, then all faces of $P$ are regular polygons by
the above assumption about triangular faces and Proposition \ref{prop.n3}.
This implies
that all the edges of $P$ have one and the same length, which is impossible. Therefore, $P$ has a rectangular face of size $a\times b$, where $a\neq b$.

Now, take any vertex $A$ of $P$. Suppose that $A$ is adjacent to $m$  non-square rectangular  faces. Obviously, $m\leq 3$, and we have proved that $m\geq 1$.

Let us consider all edges adjacent to $A$, arranged in order of going around $A$: $e_1,e_2,\dots, e_k$, $i\in \mathbb{Z}_k$.
It is clear that $k \geq 3$.
Denote by $l_i$ the length of the edge $e_i$, where $i=1,2,\dots,k$.
It should be noted that by our assumptions and by Proposition \ref{pr.four.edges},
the edges of $P$ may have either two or three different lengths.

Let us consider the following observations.

1) If $l_i \neq l_{i+1}$, then the face between $e_i$ and $e_{i+1}$ is a rectangle of size $l_i \times l_{i+1}$. In particular, the angle of this face at the vertex $A$
is $\pi/2$;

2) If $l_i = l_{i+1}$, then the face between $e_i$ and $e_{i+1}$ is a regular $n$-gon, $n\geq 3$. The angle of this face at the vertex $A$ is at least $\pi/3$;

3) If $l_i \neq l_{i+1}$, then $l_{i-1}=l_i$ and $l_{i+1}=l_{i+2}$, otherwise, we get two rectangular faces with a common edge, and $P$ is a right prism by Proposition
\ref{prop.n8}.

If we have more than $2$ changes of lengths in the sequence $l_1,l_2,l_3 \dots, l_k, l_1$, then $k \geq 6$ due to Observation 3) and
the sum of angles at the vertex $A$ is at least
$3\cdot \pi/2 +3 \cdot \pi/3=5\pi/2 >2\pi$, which is impossible. Since we have at least one face of size $a\times b$,
then there are exactly $2$ changes of lengths in the sequence $l_1,l_2,l_3 \dots, l_k, l_1$. Hence,
the sum of the angles at the vertex $A$ is at least
$2\cdot \pi/2 +(k-2) \cdot \pi/3$. Since this sum should be less than $2\pi$, we have $k<5$. On the other hand, $k\geq 4$ according to Observation 3).

Therefore, $k=4$ and $(l_1,l_2,l_3,l_4)=(a,a,b,b)$ up to permutation of indices of type $i\mapsto i+j$, $i \in \mathbb{Z}_4$, for some $j \in \{1,2,3,4\}$.
Hence, we have two rectangular faces of size $a\times b$ and two regular faces with edge lengths $a$ and $b$ respectively. Since the sum of angles at the vertex $A$ of
these two faces is less than $\pi$, then one of them is a regular triangle, while another one is a regular $n$-gon, where $n=3,4,5$.

In particular, there are two faces of $P$ with a common side, such that one of this faces is a regular triangle and the second one is a non-squared rectangle.
By Lemma~\ref{two.faces}, $P$ should be right prism over a regular triangle, which is impossible.
This contradiction proves the theorem.
\end{proof}
\medskip

\begin{theorem}\label{th.3dim.mpoint}
Let $P$ be a $3$-point homogeneous non-degenerate polyhedron in $\mathbb{R}^3$. Then $P$ is a $m$-point homogeneous for all $m \geq 1$ and
one of the following properties holds:

1) $P$ is a regular tetrahedron, a cube, a regular octahedron, or a regular icosahedron;

2) $P$ is a cuboctahedron;

3) $P$ is a homogeneous tetrahedron  with four pairwise isometric faces that are acute triangles;

4) $P$ is a right prism over a regular $n$-gon, $n\geq 3$, such that the set of distances between vertices of $P$ from a fixed base has empty
intersection with the set of distances between vertices of $P$ from distinct bases;

5) $P$ is a right antiprism over a regular $n$-gon, $n\geq 2$, such that the set of distances between vertices of $P$ from a fixed base has empty
intersection with the set of distances between vertices of $P$ from distinct bases;

6) $P$ is a rectangular parallelepiped of size $a\times b \times c$, where $a\leq b \leq c$ and $a^2+b^2\neq c^2$.
\end{theorem}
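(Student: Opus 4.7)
The plan is to leverage Theorem~\ref{th.3dim.2point} together with Corollary~\ref{co:if-homog}. Since every $3$-point homogeneous polyhedron is a fortiori $2$-point homogeneous, any $3$-point homogeneous polyhedron in $\mathbb{R}^3$ must appear in the six cases of Theorem~\ref{th.3dim.2point} (plus the dodecahedron, which appears among the regular polyhedra there). Moreover, by Corollary~\ref{co:if-homog} specialised to $n=3$, it suffices to upgrade $3$-point homogeneity to $m$-point homogeneity for all $m$: every case proved to be $3$-point homogeneous is automatically $m$-point homogeneous for all $m\geq 1$. So the task reduces to deciding, for each entry in the list of Theorem~\ref{th.3dim.2point}, whether it is $3$-point homogeneous, and assembling the resulting list.

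The dodecahedron must be removed: by Theorem~\ref{th:reg_pol} it is $2$-point homogeneous but not $3$-point homogeneous. For the remaining regular polyhedra (tetrahedron, cube, octahedron, icosahedron), Theorem~\ref{th:reg_pol} gives $m$-point homogeneity for all $m$. The cuboctahedron is $TC_3$, and we have already noted (cf.\ \cite[Proposition 6.1]{BerNik22}) that $TC_3$ is $m$-point homogeneous for all $m$. Homogeneous tetrahedra with four pairwise isometric acute faces are $m$-point homogeneous by Proposition~\ref{prop.n6.0}. Right prisms over a regular $n$-gon in the rigidly two-layered regime are $m$-point homogeneous for all $m$ by Theorem~\ref{th.constr.2} (applied to the base, which is a regular polygon and hence $m$-point homogeneous for every $m$ by Theorem~\ref{th.2dim.1}). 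Right antiprisms over a regular $n$-gon in the rigidly two-layered regime (or regular polyhedra of that form) are $m$-point homogeneous for all $m$ by Proposition~\ref{pr.prism.3.00}. Finally, a rectangular parallelepiped $[0,a]\times [0,b]\times [0,c]$ with $a\leq b\leq c$ and $a^2+b^2\neq c^2$ is $m$-point homogeneous for all $m$ by the last assertion of Proposition~\ref{pr.prism.3}.

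Thus the full list of $3$-point homogeneous polyhedra in $\mathbb{R}^3$ coincides with the list of $2$-point homogeneous polyhedra in Theorem~\ref{th.3dim.2point}, with the dodecahedron deleted; this is precisely the list in the statement of Theorem~\ref{th.3dim.mpoint}. Moreover, for every polyhedron $P$ in this list we have already verified $m$-point homogeneity for all $m\in\mathbb{N}$ in the references above, so the additional claim that $P$ is $m$-point homogeneous for all $m\geq 1$ follows immediately (and is in any case forced by Corollary~\ref{co:if-homog} once $3$-point homogeneity is known).

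The only part requiring genuine work is the exclusion of the dodecahedron, which is already settled by Theorem~\ref{th:reg_pol}; every other step is a straight citation of the results assembled in Sections~\ref{sec.2}--\ref{sec.7}. The main conceptual point is that the earlier effort — in particular Theorem~\ref{th.constr.2}, Proposition~\ref{pr.prism.3.00}, Proposition~\ref{prop.n6.0}, and Corollary~\ref{co:if-homog} — has already covered each case on the list individually, so the classification follows by combining Theorem~\ref{th.3dim.2point} with these $m$-point upgrades.
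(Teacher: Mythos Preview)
Your proof is correct and follows essentially the same approach as the paper's own proof: both reduce to the list in Theorem~\ref{th.3dim.2point}, invoke Corollary~\ref{co:if-homog} to upgrade $3$-point to $m$-point homogeneity, and then cite the same results (Theorem~\ref{th:reg_pol}, \cite[Proposition~6.1]{BerNik22}, Proposition~\ref{prop.n6.0}, Theorem~\ref{th.constr.2}, Proposition~\ref{pr.prism.3.00}, Proposition~\ref{pr.prism.3}) to verify $m$-point homogeneity case by case, excluding only the dodecahedron via Theorem~\ref{th:reg_pol}.
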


\begin{proof}
It is clear that $P$ is one of polyhedra from the list of Theorem \ref{th.3dim.2point}.
By Corollary~\ref{co:if-homog}, we need only to check, which polyhedra from Theorem \ref{th.3dim.2point} are $3$-point homogeneous.

By Theorem \ref{th:reg_pol},
the regular tetrahedron, cube, regular octahedron, regular icosahedron are $m$-point homogeneous polyhedra for every natural $m$, while
the dodecahedron is $2$-point homogeneous but is not $3$-point homogeneous.
By  \cite[Proposition 6.1]{BerNik22}, the cuboctahedron is $m$-point homogeneous polyhedron for every natural $m$.

By Proposition \ref{prop.n6.0},
a homogeneous tetrahedron  with four pairwise isometric faces, that are acute triangles,
is $m$-point homogeneous for all $m \in \mathbb{N}$.

Recall that a generalized homogeneous prism $P$ (in particular, right prism or right antiprism) is called  {\it  rigidly two-layered} if
for all vertices $A,B,C \in {P}$, the equality $d(A,B)=d(A,C)$ implies that $B$ and $C$ are situated in one and the same base of ${P}$
(equivalently, the set of distances between vertices of $P$ from a fixed base has empty intersection with the set
of distances between vertices of $P$ from distinct bases).

If $P_1$ is 2-point homogeneous (hence, $m$-point homogeneous for any $m \in \mathbb{N}$) polygon
and the prism $P=P_1\times [0,c]$ is rigidly two-layered, then $P$ is a $m$-point homogeneous polytope in $\mathbb{R}^{3}$
by Theorem \ref{th.constr.2}.
By 2) of  Proposition \ref{pr.prism.3},
if the prism  $P =P_1 \times [0,c] \subset \mathbb{R}^3$ is not rigidly two-layered and $P_1$ is not a rectangle, then $P$ is not $2$-point homogeneous.

If a right antiprism $P$ over a regular $n$-gon, where $n\geq 2$,
is  rigidly two-layered (as a generalized right prism) or
$P$ is a regular polyhedron,
then it is $m$-point homogeneous for all $m \in \mathbb{N}$ by Proposition \ref{pr.prism.3.00}.
Otherwise, if $P$ is not rigidly two-layered and $P$ is not a regular polyhedron, then $P$ is not $2$-point homogeneous.

By 3) of  Proposition \ref{pr.prism.3},
a rectangular parallelepiped $P=[0,a]\times [0,b] \times [0,c]$ in $\mathbb{R}^3$, where $0<a\leq b \leq c$,
is $m$-point homogeneous  for any natural $m$ if and only if $a^2+b^2\neq c^2$; however, if $a^2+b^2 =c^2$, then $P$ is homogeneous, but is not $2$-point homogeneous.
\end{proof}
\medskip

It should be noted that the list of $3$-point homogeneous polyhedra in Theorem \ref{th.3dim.mpoint} is shorter than
the list of $2$-point homogeneous polyhedra in Theorem \ref{th.3dim.2point} exactly in one item:
the regular dodecahedron is $2$-point homogeneous but is not $3$-point homogeneous.

\bigskip

\vspace{15mm}

\end{document}